\begin{document}

\title*{A Dual Interpretation of the Gromov--Thurston Proof of Mostow Rigidity and 
Volume Rigidity for Representations of Hyperbolic Lattices}
\titlerunning{Mostow Rigidity and Volume Rigidity for Hyperbolic Lattices}
\author{Michelle Bucher, Marc Burger and Alessandra Iozzi}
\authorrunning{M. Bucher, M. Burger and A. Iozzi} 
\institute{Michelle Bucher \at Section de Math\'ematiques Universit\'e de Gen\`eve, 
2-4 rue du Li\`evre, Case postale 64, 1211 Gen\`eve 4, Suisse, \email{Michelle.Bucher-Karlsson@unige.ch}
\and Marc Burger  \at Department Mathematik, ETH, R\"amistrasse 101, 
8092 Z\"urich, Schweiz, \email{burger@math.ethz.ch}
\and Alessandra Iozzi \at Department Mathematik, ETH, R\"amistrasse 101, 
8092 Z\"urich, Schweiz, \email{iozzi@math.ethz.ch}}

%
%
\maketitle

\abstract*{We use bounded cohomology to define a notion of volume of an 
$\mathrm{SO}(n,1)$-valued representation 
of a lattice $\Gamma<\mathrm{SO}(n,1)$ and, using this tool, 
we give a complete proof of the volume rigidity
theorem of Francaviglia and Klaff \cite{Francaviglia_Klaff} in this setting.  
Our approach gives in particular 
a proof of Thurston's version of Gromov's proof of Mostow Rigidity (also in the non-cocompact case), 
which is dual to the Gromov--Thurston proof using the simplicial volume invariant.}

\abstract{We use bounded cohomology to define a notion of volume of an 
$\mathrm{SO}(n,1)$-valued representation 
of a lattice $\Gamma<\mathrm{SO}(n,1)$ and, using this tool, 
we give a complete proof of the volume rigidity
theorem of Francaviglia and Klaff \cite{FranKlaff06} in this setting.  Our approach gives in particular 
a proof of Thurston's version of Gromov's proof of Mostow Rigidity (also in the non-cocompact case), 
which is dual to the Gromov--Thurston proof using the simplicial volume invariant.}

\section{Introduction} 
\label{sec:1}
Strong rigidity of lattices was proved in 1965 by Mostow \cite{Mostow} who,
while searching for a geometric explanation of the deformation rigidity results 
obtained by Selberg \cite{Selberg}, Calabi--Vesentini 
\cite{Calabi_Vesentini_1, Calabi_Vesentini_2} and Weil
\cite{Weil_1, Weil_2}, showed the remarkable fact 
that, under some conditions, topological data of a manifold determine its metric.  
Namely, he proved that if $M_i=\Gamma_i\backslash \mathbb{H}^n$, $i=1,2$ 
are compact quotients of real hyperbolic $n$-space and $n\geq3$, then  
any homotopy equivalence $\varphi:M_1\to M_2$ is, up  to homotopy, induced by an isometry.
Shortly thereafter, this was extended to the finite volume case by G.~Prasad \cite{Prasad}.

The methods introduced by Mostow emphasized the role of the quasi-isometries of 
$\widetilde M_i=\mathbb{H}^n$, 
their quasi-conformal extension to $\partial\mathbb{H}^n$, ergodicity phenomena of the $\Gamma_i$-action on
$\partial\mathbb{H}^n$, as well as almost everywhere differentiability results {\em \`a la} Egorov.  

In the 1970s, a new approach for rigidity in the real hyperbolic case was developed by Gromov. 
In this context he introduced $\ell^1$-homology and the simplicial volume: 
techniques like smearing and straightening became important. 
This approach was then further developed by Thurston \cite[Chapter 6]{Thurston_notes} 
and one of its consequences is an extension to hyperbolic manifolds of Kneser's theorem 
for surfaces \cite{Kneser}.   
To wit,  the computation of the simplicial volume $\|M\|=\frac{\mathrm{Vol}(M)}{v_n}$
implies for a continuous map  $f:M_1\to M_2$ between compact real hyperbolic manifolds, 
that 
$$
\deg f\leq\frac{\mathrm{Vol}(M_2)}{\mathrm{Vol}(M_1)}\,.
$$
If $\dim M_i\geq3$, Thurston proved that equality holds if and only if 
$f$ is homotopic to an isometric covering
while the topological assertion in the case in which $\dim M_i=2$ is Kneser's theorem \cite{Kneser}.

The next step, in the spirit of Goldman's theorem \cite{Goldman} -- 
what now goes under the theory of {\it maximal representations} -- 
is to associate an invariant $\mathrm{Vol}(\rho)$ to an arbitrary representation
$$
\rho:\pi_1(M)\to\mathrm{Isom}(\mathbb{H}^n)
$$ 
of the fundamental group of $M$, 
satisfying a Milnor--Wood type inequality
$$
\mathrm{Vol}(\rho)\leq\mathrm{Vol}(i)\,.
$$
The equality should be characterized as given by the ``unique" lattice embedding $i$ of $\pi_1(M)$, 
of course provided $\dim M\geq 3$.
This was carried out in $\dim M=3$ by Dunfield \cite{Dunfield}, following Toledo's modification
of the Gromov--Thurston approach to rigidity \cite{Toledo}.

If $M$ is only of finite volume, a technical difficulty is the definition of the volume 
$\mathrm{Vol}(\rho)$ of a representation.
Dunfield introduced for this purpose the notion of pseudodeveloping map and
Francaviglia proved that the definition is independent of the choice of the pseudodeveloping map 
\cite{Francaviglia}.
Then Francaviglia and Klaff \cite{FranKlaff06} proved a ``volume rigidity theorem" for representations
$$
\rho:\pi_1(M)\to\mathrm{Isom}(\mathbb{H}^k)\,,
$$
where now $k$ is not necessarily equal to $\dim M$.  
In their paper, the authors actually succeed in applying the technology developed 
by Besson--Courtois--Gallot in their seminal work on entropy rigidity \cite{Besson_Courtois_Gallot_gafa}.
An extension to representations of $\pi_1(M)$ into $\mathrm{Isom}(\mathbb{H}^n)$
for an arbitrary compact manifold $M$ has been given by Besson--Courtois--Gallot 
\cite{Besson_Courtois_Gallot_comm}.

Finally, Bader, Furman and Sauer proved a generalization of Mostow Rigidity for cocycles in the case of 
real hyperbolic lattices with some integrability condition, using, among others, bounded cohomology 
techniques, \cite{BaderFurmanSauer}.
\medskip

The aim of this paper is to give a complete proof of volume rigidity 
from the point of view of bounded cohomology,
implementing a strategy first described in \cite{Iozzi_ern} and 
used in the work on maximal representations
of surface groups \cite{Burger_Iozzi_Wienhard_ann, Burger_Iozzi_Wienhard_toledo}, 
as well as in the proof of
Mostow Rigidity in dimension $3$ in \cite{BurgerIozzi06}.

Our main contribution consists on the one hand in identifying the top dimensional bounded equivariant
cohomology of the full group of isometries $\mathrm{Isom}(\mathbb{H}^n)$, and on the other 
in giving a new definition of the volume of a representation of $\pi_1(M)$, when $M$ is not compact; 
this definition, that uses bounded relative cohomology, generalizes the one introduced in 
\cite{Burger_Iozzi_Wienhard_toledo} for surfaces.  

\medskip

In an attempt to be pedagogical, throughout the paper we try to describe, in varying details, 
the proof of all results.

\medskip
Let $\mathrm{Vol}_n(x_0,\dots,x_n)$ denote the signed volume of the convex hull of the points 
$x_0,\dots,x_n\in\overline{\mathbb{H}^n}$.
Then $\mathrm{Vol}_n$ is a $G^+:=\mathrm{Isom}^+(\mathbb{H}^n)$-invariant cocycle on 
$\overline{\mathbb{H}^n}$
and hence defines a top dimensional cohomology class $\omega_n\in H_c^n(G^+,\mathbf{R})$.
Let $i:\Gamma\hookrightarrow G^+$ be an embedding of $\Gamma$ as a lattice in the group 
of orientation preserving isometries of $\mathbb{H}^n$ and 
let $\rho:\Gamma\rightarrow G^+$ be an arbitrary representation of $\Gamma$. 
Suppose first that $\Gamma$ is torsion free. 
Recall that the cohomology of $\Gamma$ is canonically isomorphic 
to the cohomology of the $n$-dimensional quotient manifold $M:=i(\Gamma)\backslash \mathbb{H}^n$. 

If $M$ is compact, by Poincar\'e duality the cohomology groups 
$H^n(\Gamma,\mathbf{R})\cong H^n(M,\mathbf{R})$ 
in top dimension are canonically isomorphic to $\mathbf{R}$,
with the isomorphism  given by the evaluation on the fundamental class $[M]$. 
We define the volume $\mathrm{Vol}(\rho)$ of $\rho$ by 
$$
\mathrm{Vol}(\rho)=\langle \rho^*(\omega_n),[M]\rangle\,,
$$
where $\rho^*:H^n_c(G^+,\mathbf{R})\rightarrow H^n(\Gamma,\mathbf{R})$ 
denotes the pull-back via $\rho$. 
In particular  the absolute value of the volume of the lattice embedding $i$ 
is equal to the volume of the hyperbolic manifold $M$,
$\mathrm{Vol}(M)=\langle i^*(\omega_n),[M]\rangle$.

If $M$ is not compact, the above definition fails since $H^n(\Gamma,\mathbf{R})\cong H^n(M,\mathbf{R})=0$.
Thus we propose the following approach: since $\mathrm{Vol}_n$ is in fact a bounded cocycle,
it defines a bounded class $\omega_n^b\in H_{b,c}^n(G^+,\mathbf{R})$ in the bounded cohomology
of $G^+$ with trivial $\mathbf{R}$-coefficients.  Thus associated to a homomorphism $\rho:\Gamma\to G^+$
we obtain $\rho^\ast(\omega_n^b)\in H_b^n(\Gamma,\mathbf{R})$;
since $\widetilde M=\mathbb{H}^n$ is contractible, it follows easily that 
$H_b^n(\Gamma,\mathbf{R})$ is canonically isomorphic to the bounded singular cohomology
$H_b^n(M,\mathbf{R})$ of the manifold $M$ (this is true in much greater generality
\cite{Gromov_82, Brooks}, but it will not be used here).  To proceed further,
let $N\subset M$ be a compact core of $M$, that is the complement in $M$ of a disjoint union
of finitely many horocyclic neighborhoods $E_i$, $i=1,\dots, k$, of cusps.
Those have amenable fundamental groups and thus the map
$(N,\partial N)\to(M,\varnothing)$ induces an isomorphism in cohomology,
$H_b^n(N,\partial N,\mathbf{R})\cong H_b^n(M,\mathbf{R})$, by means of which 
we can consider $\rho^\ast(\omega_n^b)$ as a bounded relative class.
Finally, the image of $\rho^\ast(\omega_n^b)$ via the comparison map 
$c:H_b^n(N,\partial N,\mathbf{R})\to H^n(N,\partial N,\mathbf{R})$
is an ordinary relative class whose
evaluation on the relative fundamental class $[N,\partial N]$ gives the definition of the volume of $\rho$,
$$
\mathrm{Vol}(\rho):=\langle (c\circ\rho^\ast)(\omega_n^b),[N,\partial N]\rangle\,,
$$
which turns out to be independent of the choice of the compact core $N$.
When $M$ is compact, we recover of course the invariant previously defined.
We complete the definition in the case in which $\Gamma$ has torsion by setting 
\begin{equation*}
\mathrm{Vol}(\rho):=\frac{\mathrm{Vol}(\rho|_\Lambda)}{[\Gamma:\Lambda]}\,.
\end{equation*}
where $\Lambda<\Gamma$ is a torsion free subgroup of finite index.

\begin{theorem}\label{theorem:maxrep}
Let $n\geq3$. Let $i:\Gamma\hookrightarrow\mathrm{Isom}^+(\mathbb H^n)$ be a lattice
embedding and let $\rho:\Gamma\rightarrow \mathrm{Isom}^+(\mathbb H^n)$ be any representation. Then 
\begin{equation}\label{MilnorWood}
| \mathrm{Vol}(\rho)| \leq | \mathrm{Vol}(i)| =\mathrm{Vol}(M)\,,
\end{equation}
with equality if and only if $\rho$ is conjugated to $i$ by an isometry.
\end{theorem}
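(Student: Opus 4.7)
The plan is to follow the bounded-cohomology strategy alluded to in the introduction: the Milnor--Wood inequality comes from a cochain-level pairing between $\omega_n^b$ and a norm-efficient relative fundamental cycle, and the equality case is reduced via a boundary map to the rigidity of regular ideal simplices. Concretely, for the inequality I would estimate
\begin{equation*}
|\mathrm{Vol}(\rho)|=\bigl|\langle(c\circ\rho^*)(\omega_n^b),[N,\partial N]\rangle\bigr|\leq \|\rho^*(\omega_n^b)\|_\infty\cdot\|[N,\partial N]\|_1\leq v_n\cdot\frac{\mathrm{Vol}(M)}{v_n}=\mathrm{Vol}(M),
\end{equation*}
combining the Haagerup--Munkholm estimate $\|\omega_n^b\|_\infty=v_n$ (with $v_n$ the volume of the regular ideal $n$-simplex, $n\geq 3$) and the norm-decreasing property of pullback, together with Gromov's formula $\|M\|=\mathrm{Vol}(M)/v_n$ adapted to the relative setting via $\ell^1$-efficient relative cycles whose mass is controlled near the cusps. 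For $\rho=i$ the chain collapses to equalities, and the case of $\Gamma$ with torsion reduces immediately to the torsion-free case by the finite-index definition of $\mathrm{Vol}(\rho)$.

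For the equality case, I would first use amenability of minimal parabolics in $G^+=\mathrm{Isom}^+(\mathbb{H}^n)$ to construct a $\Gamma$-equivariant measurable boundary map
\begin{equation*}
\varphi:\partial\mathbb{H}^n\longrightarrow\mathcal{M}^1(\partial\mathbb{H}^n),
\end{equation*}
with $\Gamma$ acting on the source via $i$ and on the target via $\rho$. Since $\partial\mathbb{H}^n$ is a doubly ergodic $G^+$-boundary, the alternating Borel complex on $(\partial\mathbb{H}^n)^{\bullet+1}$ computes $H_{b,c}^\bullet(G^+,\mathbf{R})$; in this model $\omega_n^b$ is represented by $\mathrm{Vol}_n$, and $\rho^*(\omega_n^b)$ by the cocycle
\begin{equation*}
\Phi(x_0,\dots,x_n)=\int\cdots\int\mathrm{Vol}_n(y_0,\dots,y_n)\,d\varphi_{x_0}(y_0)\cdots d\varphi_{x_n}(y_n).
\end{equation*}
Tracing the saturated chain of inequalities above, the hypothesis $|\mathrm{Vol}(\rho)|=\mathrm{Vol}(M)$ forces the a.e.\ pointwise identity $\Phi=\pm\mathrm{Vol}_n$ on $(\partial\mathbb{H}^n)^{n+1}$, after identifying the relative bounded cohomology $H_b^n(N,\partial N,\mathbf{R})$ with the appropriate boundary model for $\Gamma$.

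Evaluating this identity on $(n+1)$-tuples of vertices of positively oriented regular ideal simplices, the right-hand side equals $\pm v_n=\pm\max|\mathrm{Vol}_n|$; by the Haagerup--Munkholm equality statement, the product measure $\varphi_{x_0}\otimes\cdots\otimes\varphi_{x_n}$ must then be concentrated on vertices of regular ideal simplices of the correct orientation. Disintegration and Fubini arguments force each $\varphi_x$ to be a Dirac mass, producing a measurable map $\partial\mathbb{H}^n\to\partial\mathbb{H}^n$ that sends regular ideal simplices to regular ideal simplices of the same orientation. Since $G^+$ acts simply transitively on ordered, positively oriented regular ideal simplices, $\varphi$ must coincide a.e.\ with the boundary extension of a unique $g\in G^+$; the $\Gamma$-equivariance then yields $\rho(\gamma)=g\,i(\gamma)\,g^{-1}$ for all $\gamma\in\Gamma$, so $\rho$ is conjugate to $i$ by an isometry.

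The main obstacle I expect is the step ``cohomological equality $\Rightarrow$ a.e.\ cocycle equality'' in the \emph{non-cocompact} case: one must identify $H_b^n(N,\partial N,\mathbf{R})$ with a concrete boundary model for $\Gamma$ compatible with the pairing against $[N,\partial N]$, and exhibit approximating norm-minimizing relative cycles whose simplices become asymptotically regular and ideal (Gromov--Thurston smearing/straightening adapted to cusps). Once this cuspidal bookkeeping is in place, the rigidity conclusion is essentially forced by the geometry of the regular ideal simplex and the simply transitive action of $G^+$ on positively oriented ordered such simplices.
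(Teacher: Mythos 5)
Your outline is the classical ``primal'' Gromov--Thurston route, and as written it has genuine gaps at the points where the paper does real work. For the inequality, the estimate $|\langle(c\circ\rho^*)(\omega_n^b),[N,\partial N]\rangle|\leq\|\rho^*(\omega_n^b)\|\cdot\|[N,\partial N]\|_1$ needs two nontrivial inputs you do not supply: the relative proportionality principle $\|[N,\partial N]\|_1=\mathrm{Vol}(M)/v_n$ for cusped manifolds, and control of the norm of the \emph{relative} class representing $\rho^*(\omega_n^b)$. The canonical map $H^n_b(N,\partial N)\to H^n_b(M)$ is norm non-increasing, which is the wrong direction; you need its inverse not to increase norms, i.e.\ the isometry of this identification, which the paper explicitly flags as a fact it avoids (it is proved via Bucher--Kim--Kim type techniques). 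The paper bypasses all of this: it shows $\operatorname{trans}_\Gamma\circ\rho^*=\lambda\,\mathrm{Id}$ on $H^n_{c,b}(G,\mathbf{R}_\varepsilon)$ with $\lambda=\mathrm{Vol}(\rho)/\mathrm{Vol}(M)$ (using the commutativity of the transfer diagram), and $|\lambda|\le1$ because all maps involved are norm non-increasing; no simplicial volume is needed.

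In the equality case there are three concrete gaps. First, ``the hypothesis forces $\Phi=\pm\mathrm{Vol}_n$ a.e.'' has no mechanism behind it: in the $\Gamma$-equivariant boundary complex there are plenty of coboundaries, so equality of classes or of norms does not pin down a cocycle representative pointwise. The paper obtains an identity only after applying $\operatorname{trans}_\Gamma$, landing in $L^\infty((\partial\mathbb{H}^n)^{n+1},\mathbf{R}_\varepsilon)^G$ where the no-coboundary Lemma (crucially with $\varepsilon$-coefficients and the full group $G$, since $H^n_{c,b}(G^+,\mathbf{R})$ is unknown for $n\ge4$) applies; the resulting statement is the \emph{integrated} formula of Theorem~\ref{thm:formula}, not a pointwise identity for $\mathrm{Vol}_n\circ\varphi$. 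Second, even granting an a.e.\ identity on $(\partial\mathbb{H}^n)^{n+1}$, you then evaluate it on vertices of regular ideal simplices, which form a null set for $n\ge3$; since $\varphi$ (hence $\Phi$) is only measurable, this evaluation is illegitimate -- this is precisely why the paper upgrades the a.e.\ formula to an everywhere formula (Proposition~\ref{prop:ae->e}, via Lusin's theorem), the main technical point you leave to ``cuspidal bookkeeping'' that in fact arises even for cocompact $\Gamma$ whenever $\varphi$ is not continuous. Third, knowing that $\varphi$ sends almost every regular simplex to a regular simplex of the same orientation does not yet give an isometry: simple transitivity produces an isometry \emph{per simplex}, and one must show it is a.e.\ the same one; the paper does this with the reflection group of a regular ideal simplex (dense in $G$ for $n\ge4$, plus ergodicity; $n=3$ requires the harder Thurston--Dunfield argument), and this is exactly where $n\ge3$ enters. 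Your Dirac-mass deduction from maximality of the integrated volume also needs an argument; the paper instead reduces the measure-valued map to a genuine boundary map first (atoms, Douady--Earle barycenter, double ergodicity) before any volume consideration.
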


An analogous theorem, in the more general case of a representation 
$\rho:\Gamma\rightarrow \mathrm{Isom}^+(\mathbb H^m)$ with $m\geq n$,
has been proven by Francaviglia and Klaff \cite{FranKlaff06} 
with a different definition of volume. 

Taking in particular $\rho$ to be another lattice embedding of $\Gamma$, 
we recover Mostow--Prasad Rigidity theorem for hyperbolic lattices:

\begin{corollary}[\cite{Mostow, Prasad}] 
Let $\Gamma_{1},\Gamma_{2}$ be two isomorphic lattices in $\mathrm{Isom}^+(\mathbb{H}^{n})$. 
Then there exists an isometry $g\in\mathrm{Isom}(\mathbb{H}^{n})$ 
conjugating $\Gamma_1$ to $\Gamma_2$. 
\end{corollary}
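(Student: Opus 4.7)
The strategy is to feed the given isomorphism $\varphi:\Gamma_1\xrightarrow{\sim}\Gamma_2$ into Theorem~\ref{theorem:maxrep} in both directions. Writing $G^+=\mathrm{Isom}^+(\mathbb{H}^n)$ and $i_j:\Gamma_j\hookrightarrow G^+$ for the inclusions, set $\rho:=i_2\circ\varphi:\Gamma_1\to G^+$ and $\rho':=i_1\circ\varphi^{-1}:\Gamma_2\to G^+$. The single missing ingredient is the \emph{naturality identity}
$$
|\mathrm{Vol}(\rho)|=\mathrm{Vol}(\Gamma_2\backslash\mathbb{H}^n)
$$
(and its analogue for $\rho'$), which says that the composite representation $\rho$ saturates the Milnor--Wood bound \eqref{MilnorWood} and therefore unlocks the rigidity clause of Theorem~\ref{theorem:maxrep}.

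To establish the naturality identity I would first reduce to the torsion-free case: pick a torsion-free normal subgroup $\Lambda\triangleleft\Gamma_1$ of finite index such that $\varphi(\Lambda)\triangleleft\Gamma_2$ is also torsion-free (take $\Lambda=\Lambda_1'\cap\varphi^{-1}(\Lambda_2')$ for torsion-free normal $\Lambda_j'\triangleleft\Gamma_j$), so that by definition $\mathrm{Vol}(\rho)=\mathrm{Vol}(\rho|_\Lambda)/[\Gamma_1:\Lambda]$ and $\mathrm{Vol}(\Gamma_2\backslash\mathbb{H}^n)=\mathrm{Vol}(\varphi(\Lambda)\backslash\mathbb{H}^n)/[\Gamma_1:\Lambda]$, the two indices being equal because $\varphi$ is an isomorphism. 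Now $\rho|_\Lambda$ factors as $i_2|_{\varphi(\Lambda)}\circ\varphi|_\Lambda$, so $\rho|_\Lambda^*\omega_n^b=\varphi|_\Lambda^*(i_2^*\omega_n^b)$. The abstract isomorphism $\varphi|_\Lambda:\Lambda\xrightarrow{\sim}\varphi(\Lambda)$ induces canonical isomorphisms on (bounded) group cohomology which, under the identifications $H_b^n(\Lambda)\cong H_b^n(N,\partial N)$ and $H_b^n(\varphi(\Lambda))\cong H_b^n(N',\partial N')$ (for compact cores $N$ of $\Lambda\backslash\mathbb{H}^n$ and $N'$ of $\varphi(\Lambda)\backslash\mathbb{H}^n$), commute with the comparison map $c$ and send $[N,\partial N]$ to $\pm[N',\partial N']$. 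Consequently $|\mathrm{Vol}(\rho|_\Lambda)|=\mathrm{Vol}(\varphi(\Lambda)\backslash\mathbb{H}^n)$, and the identity follows upon dividing by $[\Gamma_1:\Lambda]$.

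Combining with the analogous identity for $\rho'$ and two applications of Theorem~\ref{theorem:maxrep} gives
$$
\mathrm{Vol}(\Gamma_2\backslash\mathbb{H}^n)=|\mathrm{Vol}(\rho)|\leq\mathrm{Vol}(\Gamma_1\backslash\mathbb{H}^n)=|\mathrm{Vol}(\rho')|\leq\mathrm{Vol}(\Gamma_2\backslash\mathbb{H}^n),
$$
forcing equality throughout. The rigidity clause of Theorem~\ref{theorem:maxrep} applied to $(\Gamma_1,i_1,\rho)$ then supplies $g\in\mathrm{Isom}(\mathbb{H}^n)$ with $\rho(\gamma)=g\,i_1(\gamma)\,g^{-1}$ for every $\gamma\in\Gamma_1$; that is, $\varphi(\gamma)=g\gamma g^{-1}$ inside $G^+$, and hence $g\Gamma_1 g^{-1}=\Gamma_2$.

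The main obstacle is the naturality identity of paragraph two --- specifically, verifying that the abstract isomorphism $\varphi|_\Lambda$ intertwines the relative bounded cohomology of the two compact cores, together with the comparison map and the relative fundamental class (up to sign). In the cocompact case this is automatic from Poincar\'e duality. In the cusped case it requires matching the peripheral structures of $N$ and $N'$, which rests on the algebraic characterization of cusp subgroups of a torsion-free non-uniform hyperbolic lattice as maximal virtually-$\mathbb{Z}^{n-1}$ subgroups; such a characterization is preserved by any group isomorphism, so $\varphi|_\Lambda$ descends to a proper homotopy equivalence of the aspherical manifolds $\Lambda\backslash\mathbb{H}^n$ and $\varphi(\Lambda)\backslash\mathbb{H}^n$ sending cusps to cusps, as required.
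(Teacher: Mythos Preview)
Your proposal is correct and follows the same route the paper indicates: apply Theorem~\ref{theorem:maxrep} with $\rho$ taken to be the second lattice embedding. The paper offers no proof beyond the one sentence ``Taking in particular $\rho$ to be another lattice embedding of $\Gamma$, we recover Mostow--Prasad Rigidity'', so you are supplying the details the paper leaves implicit.

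The one place where you go beyond the paper is in isolating and justifying the naturality identity $|\mathrm{Vol}(\rho)|=\mathrm{Vol}(\Gamma_2\backslash\mathbb{H}^n)$. The paper treats this as self-evident (presumably because Lemma~\ref{lemma: vol of i} is meant to apply to \emph{any} lattice embedding, and the volume invariant is supposed to be independent of which embedding one declares to be the reference one), whereas you correctly note that in the non-cocompact case this requires matching the peripheral structures under the abstract isomorphism. Your sketch of that step---characterising cusp subgroups algebraically as the maximal virtually-$\mathbb{Z}^{n-1}$ subgroups, hence preserved by $\varphi$, so that $\varphi$ induces a proper homotopy equivalence of pairs $(N,\partial N)\simeq(N',\partial N')$---is the right idea and would complete the argument. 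This is more care than the paper itself takes, not less.
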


As a consequence of Theorem~\ref{theorem:maxrep}, 
we also reprove Thurston's strict version of Gromov's degree inequality for hyperbolic manifolds. 
Note that this strict version generalizes Mostow Rigidity \cite[Theorem 6.4]{Thurston_notes}: 

\begin{corollary}[{\cite[Theorem 6.4]{Thurston_notes}}]\label{thm: Thurston Gromov Mostow}
Let $f:M_{1}\rightarrow M_{2}$ be a continuous proper map between two $n$-dimensional
hyperbolic manifolds $M_{1}$ and $M_{2}$ with $n\geq3$. Then 
$$
\mathrm{deg}(f)\leq\frac{\mathrm{Vol}(M_{2})}{\mathrm{Vol}(M_{1})}\,,
$$
with equality if and only if $f$ is homotopic to a local isometry. 
\end{corollary}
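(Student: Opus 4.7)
The plan is to deduce the corollary from Theorem~\ref{theorem:maxrep} by identifying $\mathrm{Vol}(\rho)$ with $\deg(f)\cdot\mathrm{Vol}(M_2)$, where $\rho$ is the representation induced by $f$. After passing, if needed, to orientation double covers and to torsion-free finite-index sublattices (which does not affect the assertion, thanks to the normalization built into $\mathrm{Vol}(\rho)$ for lattices with torsion), I write $M_k=\Gamma_k\backslash\mathbb H^n$ with $\Gamma_k<\mathrm{Isom}^+(\mathbb H^n)$ torsion-free, and denote by $i_k$ the lattice embeddings. Choosing basepoints and lifts, $f$ induces a homomorphism $f_\ast\colon\Gamma_1\to\Gamma_2$, and I set $\rho:=i_2\circ f_\ast\colon\Gamma_1\to\mathrm{Isom}^+(\mathbb H^n)$.

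The central step is to establish
\begin{equation*}
\mathrm{Vol}(\rho)=\deg(f)\cdot\mathrm{Vol}(M_2),
\end{equation*}
which, combined with Theorem~\ref{theorem:maxrep}, immediately yields $|\deg(f)|\cdot\mathrm{Vol}(M_2)\leq\mathrm{Vol}(M_1)$, the asserted Milnor--Wood bound. To prove the identity I would invoke naturality at each stage of the definition of $\mathrm{Vol}(\rho)$. Because $M_k$ is aspherical, $(f_\ast)^\ast$ corresponds to the topological pullback $f^\ast$ under the canonical identification $H_b^n(\Gamma_k,\mathbf{R})\cong H_b^n(M_k,\mathbf{R})$. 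Picking a compact core $N_2\subset M_2$ and then, using properness of $f$, a compact core $N_1\subset M_1$ with $f^{-1}(N_2)\subset N_1$, one obtains an induced map of pairs $(N_1,\partial N_1)\to(N_2,\partial N_2)$. Because cusp fundamental groups on both sides are amenable, the compact-core isomorphism $H_b^n(N_k,\partial N_k,\mathbf{R})\cong H_b^n(M_k,\mathbf{R})$ used to define $\mathrm{Vol}$ is natural for such a map of pairs, as is the comparison map $c$; the degree then enters via $f_\ast[N_1,\partial N_1]=\deg(f)\cdot[N_2,\partial N_2]$.

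For the equality case, suppose $|\mathrm{Vol}(\rho)|=\mathrm{Vol}(M_1)$. Theorem~\ref{theorem:maxrep} then furnishes an isometry $g\in\mathrm{Isom}(\mathbb H^n)$ with $\rho=\mathrm{Ad}(g)\circ i_1$, and the inclusion $\rho(\Gamma_1)\subset\Gamma_2$ forces $g\Gamma_1 g^{-1}$ to be a subgroup of $\Gamma_2$ of finite index equal to $|\deg(f)|$; hence $g$ descends to a covering map $\bar g\colon M_1\to M_2$, which is in particular a local isometry. Since $\bar g_\ast$ agrees with $f_\ast$ up to an inner automorphism of $\Gamma_2$, and both $M_k$ are aspherical while $f,\bar g$ are proper, a standard lifting argument produces a proper homotopy from $f$ to $\bar g$. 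The principal technical obstacle is the naturality work in the non-compact setting: one must verify that the bounded-to-relative-bounded isomorphism and its composition with $c$ commute with $f^\ast$ for the proper map of pairs induced by $f$. Once this naturality has been checked, using the amenability of cusp subgroups on both sides (already essential for the very definition of $\mathrm{Vol}(\rho)$), the chain of equalities closing the argument is pure functoriality.
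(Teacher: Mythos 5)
Your deduction is the one the paper intends: the corollary is stated as a direct consequence of Theorem~\ref{theorem:maxrep} with no separate argument given, and the substance of that deduction is exactly your identity $\mathrm{Vol}(\rho)=\deg(f)\cdot\mathrm{Vol}(M_{2})$ for $\rho=i_{2}\circ f_{\ast}$, obtained from naturality of the pullbacks, of the isomorphism $H^{n}_{b}(M_{k})\cong H^{n}_{b}(N_{k},\partial N_{k})$ (amenability of the cusp groups), and of the comparison map, together with $f_{\ast}[N_{1},\partial N_{1}]=\deg(f)\,[N_{2},\partial N_{2}]$, which properness makes available (choose $N_{1}\supset f^{-1}(N_{2})$ so that $f$ is a map of pairs). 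Combined with Lemma~\ref{lemma: vol of i} and Theorem~\ref{theorem:maxrep} this gives $|\deg(f)|\,\mathrm{Vol}(M_{2})\le\mathrm{Vol}(M_{1})$, which is Thurston's inequality (the fraction displayed in the corollary has the two volumes interchanged), and your treatment of the equality case (conjugating isometry $g$, finite index $[\Gamma_{2}:g\Gamma_{1}g^{-1}]=\mathrm{Vol}(M_{1})/\mathrm{Vol}(M_{2})=|\deg(f)|$, induced covering $\bar g$, homotopy $f\simeq\bar g$ by asphericity) is correct; the preliminary remarks about orientation covers and torsion are superfluous, since the $\Gamma_{k}$ are automatically torsion free and orientability is implicit in speaking of degree.

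The one point you leave out is the converse implication of the ``if and only if'': you must also show that if $f$ is homotopic to a local isometry $h$, then equality holds. This is quick from what you already have: a local isometry from the complete manifold $M_{1}$ to $M_{2}$ is a Riemannian covering, necessarily finite-sheeted because the volumes are finite, so $\mathrm{Vol}(M_{1})=d\cdot\mathrm{Vol}(M_{2})$ with $d$ the number of sheets; since $h$ is proper and $h_{\ast}$ agrees with $f_{\ast}$ up to conjugation, your identity applied to both maps gives $\deg(f)\,\mathrm{Vol}(M_{2})=\mathrm{Vol}(\rho)=\deg(h)\,\mathrm{Vol}(M_{2})$, hence $|\deg(f)|=d$ and equality follows. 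This detour through $\mathrm{Vol}(\rho)$ is genuinely useful here, because for non-compact manifolds the degree is a priori only invariant under \emph{proper} homotopies; for the same reason your assertion that the lifting argument yields a proper homotopy from $f$ to $\bar g$ is stronger than what is needed and would require a separate justification, whereas plain homotopy suffices for the statement.
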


Our proof of Theorem~\ref{theorem:maxrep} follows closely the steps in the proof
of Mostow Rigidity. In particular, the following result  is the dual to the use
of measure homology and smearing in \cite{Thurston_notes}.
We denote by $\varepsilon:G\to\{-1,1\}$ the homomorphism defined by 
$\varepsilon(g)=1$ if $g$ is orientation preserving and $\varepsilon(g)=-1$ if
$g$ is orientation reversing.  

\begin{theorem}\label{thm:formula}  
Let $M=\Gamma\backslash \mathbb{H}^n$ be a finite volume real hyperbolic manifold. 
Let $\rho:\Gamma\to\mathrm{Isom}(\mathbb{H}^n)$ be a representation with non-elementary image and 
let $\varphi:\partial\mathbb{H}^n\to\partial \mathbb{H}^n$ be the corresponding equivariant measurable map.
Then for every $(n+1)$-tuple of points $\xi_0,\dots,\xi_n\in\partial\mathbb{H}^n$, 
\begin{equation}\label{formula}
\int_{\Gamma\backslash \mathrm{Isom}(\mathbb{H}^n)}
\varepsilon(\dot g^{-1})\mathrm{Vol}_n(\varphi(\dot g\xi_0),\dots,\varphi(\dot g\xi_n))\,d\mu(\dot g)=
\frac{\mathrm{Vol}(\rho)}{\mathrm{Vol}(M)}\mathrm{Vol}_n(\xi_0,\dots,\xi_n)\,,
\end{equation}
where $\mu$ is the invariant probability measure on $\Gamma\backslash \mathrm{Isom}(\mathbb{H}^n)$.
\end{theorem}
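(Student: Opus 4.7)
The plan is to introduce
\begin{equation*}
c(\xi_0,\ldots,\xi_n) := \int_{\Gamma\backslash G}\varepsilon(\dot g^{-1})\,\mathrm{Vol}_n\bigl(\varphi(\dot g\xi_0),\ldots,\varphi(\dot g\xi_n)\bigr)\,d\mu(\dot g),
\end{equation*}
with $G:=\mathrm{Isom}(\mathbb{H}^n)$, and first check that $c$ is a well-defined bounded measurable alternating $n$-cocycle on $\partial\mathbb{H}^n$ which transforms as $c(h\xi_0,\ldots,h\xi_n)=\varepsilon(h)\,c(\xi_0,\ldots,\xi_n)$ under the diagonal $G$-action. The alternating and cocycle properties pass pointwise through the integral from $\mathrm{Vol}_n$; the $G$-equivariance is a change of variable $g\mapsto gh$ combined with the invariance of $\mu$, the multiplicativity of $\varepsilon$, and the $\Gamma$-equivariance of $\varphi$ (the latter being what ensures the integrand descends to $\Gamma\backslash G$ in the first place).

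Next I would invoke the fact that the space of alternating bounded measurable cocycles on $\partial\mathbb{H}^n$ transforming by the sign character $\varepsilon$ is one-dimensional and spanned by $\mathrm{Vol}_n$, a consequence of the double ergodicity of the $G$-action on $(\partial\mathbb{H}^n)^2$ and of the identification of $H^n_{c,b}(G,\mathbf{R})$ via the boundary resolution that is already central in the paper. This forces $c=\lambda\,\mathrm{Vol}_n$ for some scalar $\lambda\in\mathbf{R}$, reducing the theorem to the identification $\lambda = \mathrm{Vol}(\rho)/\mathrm{Vol}(M)$.

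To pin down $\lambda$ I would reinterpret the integration operation cohomologically: the inner cocycle $\varphi^{*}\mathrm{Vol}_n$ is a $\Gamma$-invariant bounded Borel representative of $\rho^{*}(\omega_n^b)\in H^n_b(\Gamma,\mathbf{R})$, and the prescription $f\mapsto \int_{\Gamma\backslash G}\varepsilon(\dot g^{-1})f(\dot g\,\cdot)\,d\mu(\dot g)$ is the cocycle realisation of the bounded transfer map $t_b\colon H^n_b(\Gamma,\mathbf{R})\to H^{n}_{c,b}(G,\mathbf{R})$. Under the canonical isomorphism $H^{n}_{c,b}(G,\mathbf{R})\cong\mathbf{R}\cdot\omega_n^b$, evaluating $t_b(\rho^{*}\omega_n^b)$ against the fundamental class yields precisely $\mathrm{Vol}(\rho)/\mathrm{Vol}(M)$ by the very definition of the volume of a representation.

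The main obstacle I anticipate is the non-cocompact case, where $\mathrm{Vol}(\rho)$ is extracted through the comparison map $H^n_b(N,\partial N,\mathbf{R})\to H^n(N,\partial N,\mathbf{R})$ on a compact core rather than directly from $H^n_b(\Gamma,\mathbf{R})$. One must verify that the transfer is compatible with the chain of isomorphisms $H^n_b(N,\partial N,\mathbf{R})\cong H^n_b(M,\mathbf{R})\cong H^n_b(\Gamma,\mathbf{R})$ afforded by amenability of the cuspidal subgroups, so that the scalar produced by $t_b$ at the cocycle level really matches the normalised volume appearing in the definition of $\mathrm{Vol}(\rho)$. Once this compatibility is in place the resulting commutative diagram delivers the claimed proportionality constant and \eqref{formula} follows.
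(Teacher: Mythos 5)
Your outline reproduces the paper's strategy for the \emph{almost everywhere} version of the identity: recognize the left-hand side as the transfer $\operatorname{trans}_\Gamma$ applied to the Borel representative $\varphi^*\mathrm{Vol}_n$ of $\rho^*(\omega_n^b)$, use the fact that in degree $n$ the complex of $\varepsilon$-equivariant $L^\infty$-cochains on $\partial\mathbb{H}^n$ has no coboundaries and its cocycles form a line spanned by $\mathrm{Vol}_n$, and identify the resulting scalar with $\mathrm{Vol}(\rho)/\mathrm{Vol}(M)$ through the compatibility of the bounded transfer with the relative-cohomology definition of the volume (this is exactly Propositions~\ref{prop: comm} and~\ref{prop: trans rho}, and your flagged ``main obstacle'' is the content of those propositions). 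Two remarks on this part: the one-dimensionality you invoke is not a consequence of double ergodicity with trivial coefficients --- $H^n_{c,b}(G^+,\mathbf{R})$ is unknown for $n\geq 4$ --- but of the reflection argument of Lemma~\ref{lemma: no coboundary} together with Proposition~\ref{prop: compmapisomdegn} for the coefficient module $\mathbf{R}_\varepsilon$; your writing $H^n_{c,b}(G,\mathbf{R})$ in the third paragraph should throughout be $H^n_{c,b}(G,\mathbf{R}_\varepsilon)$.

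The genuine gap is the quantifier in the conclusion. The space of bounded measurable equivariant cocycles whose one-dimensionality you use consists of $L^\infty$-\emph{classes}, i.e.\ functions modulo null sets, so the step ``this forces $c=\lambda\,\mathrm{Vol}_n$'' only yields equality for almost every $(\xi_0,\dots,\xi_n)$, which is \eqref{eq:formula1}, not the statement of Theorem~\ref{thm:formula}, which asserts the identity for \emph{every} tuple. There is no soft way around this: a strict equivariant cocycle agreeing a.e.\ with $\lambda\,\mathrm{Vol}_n$ need not obviously agree everywhere (the cocycle-identity trick that repairs this in dimension $3$ uses transitivity of $\mathrm{Isom}^+(\mathbb{H}^3)$ on triples of distinct boundary points and fails for $n\geq 4$), and your specific $c$ is built from the merely measurable map $\varphi$, so it is not continuous on nondegenerate tuples. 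The paper devotes Proposition~\ref{prop:ae->e} and Lemmas~\ref{lemma:measurekepbdelta}--\ref{lemma:continuity} (Lusin's theorem, a measure estimate uniform in $\xi$, almost-everywhere injectivity of $\varphi$ and dominated convergence) precisely to upgrade the a.e.\ equality to a pointwise one. This is not a cosmetic point: the everywhere statement is what allows one, in the equality case, to evaluate the formula on the vertices of regular ideal simplices, which form a null set in $(\partial\mathbb{H}^n)^{n+1}$, so the a.e.\ version your argument delivers would not suffice for the rigidity application.
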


This allows us to deduce strong rigidity properties of the boundary map $\varphi$ 
from the cohomological information about the boundary that, in turn, are sufficient to show
the existence of an element $g\in\mathrm{Isom}^+(\mathbb H^n)$ conjugating $\rho$ and $i$.

To establish the theorem, we first prove the almost everywhere validity of the formula 
in Theorem~\ref{thm:formula}. 
Ideally, we would need to know that $H_{b,c}^n(G^+,\mathbf{R})$ is $1$-dimensional and 
has no coboundaries in degree $n$ in the appropriate cocomplex. 
However in general we do not know how to compute
$H_{b,c}^n(G^+,\mathbf{R})$, except when $G^+=\mathrm{Isom}^+(\mathbb{H}^2)$ 
or $\mathrm{Isom}^+(\mathbb{H}^3)$ and hence there is no direct way
to prove the formula in \eqref{formula}.  To circumvent this problem, we borrow
from \cite{Bucher_prod} (see also \cite{BucherMonod}) the essential 
observation that $\mathrm{Vol}_n$ is in fact a cocycle equivariant
with respect to the full group of isometries $G=\mathrm{Isom}(\mathbb{H}^n)$, that is
$$
\mathrm{Vol}_n(gx_1,\dots,gx_n)=\varepsilon(g)\mathrm{Vol}_n(x_1,\dots,x_n)\,.
$$
This leads to consider $\mathbf{R}$ as a non-trivial coefficient module
$\mathbf{R}_\varepsilon$ for $G$ and in this context we prove that the comparison map
$$
\xymatrix{H_{b,c}^n(G,\mathbf{R}_\varepsilon)\ar[r]^\cong
&H_c^n(G,\mathbf{R}_\varepsilon)
}
$$
is an isomorphism.
By a slight abuse of notation, 
we denote again by $\omega_n^b\in H_{b,c}^n(G,\mathbf{R}_\varepsilon)$
and by $\omega_n\in H_c^n(G,\mathbf{R}_\varepsilon)$ the generator defined by $\mathrm{Vol}_n$.

Using this identification and standard tools from the homological algebra approach to bounded cohomology,
we obtain the almost everywhere validity of the formula in Theorem~\ref{thm:formula}.
Additional arguments involving Lusin's theorem are required to establish the formula pointwise.
This is essential because one step of the proof (see the beginning of \S~\ref{sec:4})
consists in showing that, if there is the equality in \eqref{MilnorWood},
the map $\varphi$ maps the vertices of almost every positively oriented maximal ideal simplex 
to vertices of positively (or negatively - one or the other, not both) oriented maximal 
ideal simplices. Since such vertices form a set of measure zero in the boundary, 
an almost everywhere statement would not be sufficient. 

\section{The Continuous Bounded Cohomology of 
$G=\mathrm{Isom}(\mathbb{H}^{n})$}
\label{sec:2}

Denote by $G=\mathrm{Isom}(\mathbb{H}^{n})$ the full isometry group of hyperbolic $n$-space, 
and by $G^{+}=\mathrm{Isom}^{+}(\mathbb{H}^{n})$
its subgroup of index $2$ consisting of orientation preserving isometries.
As remarked in the introduction there are two natural $G$-module structures
on $\mathbf{R}$: the trivial one, which we denote by $\mathbf{R}$,
and the one given by multiplication with the homomorphism $\varepsilon:G\rightarrow G/G^{+}\cong\{+1,-1\}$,
which we denote by $\mathbf{R}_{\varepsilon}$. 

Recall that if $q\in\mathbb N$, 
the continuous cohomology groups $H_{c}^q(G,\mathbf{R})$, 
respectively $H_{c}^q(G,\mathbf{R}_\varepsilon)$ -- 
or in short $H_{c}^{\bullet}(G,\mathbf{R}_{(\varepsilon)})$ for both -- 
of $G$ with coefficient in $\mathbf{R}_{(\varepsilon)}$, 
is by definition given as the cohomology of the cocomplex
$$ 
\begin{aligned}
C_c(G^{q+1},\mathbf{R}_{(\varepsilon)})^G=
\{f:G^{q+1}\rightarrow \mathbf{R}_{(\varepsilon)} \mid &f\text{ is continuous and}\\
&\varepsilon(g)\cdot f(g_0,...,g_q)=f(gg_0,...,gg_q)\}
\end{aligned}
$$
endowed with its usual homogeneous coboundary operator
\begin{equation*}
\delta:C_c(G^{q+1},\mathbf{R}_{(\varepsilon)})^G\to C_c(G^{q+2},\mathbf{R}_{(\varepsilon)})^G
\end{equation*}
defined by
\begin{equation*}
\delta f(g_0,\dots,g_{q+1}):=\sum_{j=0}^{q+1}f(g_0,\dots,g_{j-1},g_{j+1},\dots,g_{q+1})\,.
\end{equation*}
This operator clearly restricts to the bounded cochains
$$
C_{c,b}(G^{q+1},\mathbf{R}_{(\varepsilon)})^G=\{ f\in C_c(G^{q+1},\mathbf{R}_{(\varepsilon)})^G 
\mid \| f\|_\infty=\sup_{g_0,...,g_q\in G}|f(g_0,...,g_q)|<+\infty \}
$$
and the continuous bounded cohomology $H_{c,b}^q(G,\mathbf{R}_{(\varepsilon)})$ of $G$ 
with coefficients in $\mathbf{R}_{(\varepsilon)}$ is the cohomology of this cocomplex. 
The inclusion
$$
C_{c,b}(G^{q+1},\mathbf{R}_{(\varepsilon)})^G\subset C_{c}(G^{q+1},\mathbf{R}_{(\varepsilon)})^G
$$ 
induces a comparison map
$$
c:H_{c,b}^q(G,\mathbf{R}_{(\varepsilon)})\longrightarrow H_{c}^q(G,\mathbf{R}_{(\varepsilon)})\,.
$$

We call cochains in $C_{c,(b)}(G^{q+1},\mathbf{R})^G$ invariant and 
cochains in $C_{c,(b)}(G^{q+1},\mathbf{R}_\varepsilon)^G$ equivariant and
apply this terminology to the cohomology classes as well. 
The sup norm on the complex of cochains induces a seminorm in cohomology
 
$$
\| \beta \|=\inf\{\| f\|_\infty \mid f\in C_{c,(b)}(G^{q+1},\mathbf{R}_{(\varepsilon)})^G, [f]=\beta\}\,,
$$
for $\beta \in H_{c,(b)}^q(G,\mathbf{R}_{(\varepsilon)})$. 

The same definition gives the continuous (bounded) cohomology of 
any topological group acting either trivially on $\mathbf{R}$ 
or via a homomorphism into the multiplicative group $\{+1,-1\}$. 
A continuous representation $\rho:H\rightarrow G$ naturally induces pullbacks 
\begin{equation*}
H_{c,(b)}^{\bullet}(G,\mathbf{R})\longrightarrow H_{c,(b)}^{\bullet}(H,\mathbf{R})\quad
\mathrm{and}\quad H_{c,(b)}^{\bullet}(G,\mathbf{R}_{\varepsilon})\longrightarrow H_{c,(b)}^{\bullet}(H,\mathbf{R}_\rho)\,,
\end{equation*}
where $\mathbf{R}_\rho$ is the $H$-module $\mathbb R$ with the $H$-action 
given by the composition of $\rho:H\rightarrow G$ with $\varepsilon:G\to\{+1,-1\}$. 
Note that  $\| \rho^*(\beta)\| \leq \| \beta \|.$

Since the restriction to $G^+$ of the $G$-action on $\mathbf{R}_{(\varepsilon)}$ is trivial, 
there is a restriction map in cohomomology 
\begin{equation}\label{equ:Coho Restriction to G+}
H_{c,(b)}^{\bullet}(G,\mathbf{R}_{(\varepsilon)})\longrightarrow H_{c,(b)}^{\bullet}(G^{+},\mathbf{R})\,.
\end{equation}
In fact, both the continuous and the continuous bounded cohomology groups 
can be computed isometrically on the hyperbolic $n$-space $\mathbb{H}^n$, 
as this space is isomorphic to the quotient of $G$ or $G^+$ by a maximal compact subgroup. 
More precisely, set 

$$
\begin{aligned}
C_{c,(b)}((\mathbb{H}^n)^{q+1},\mathbf{R}_{(\varepsilon)})^G
=\big\{f: (\mathbb{H}^n)^{q+1}\rightarrow \mathbf{R} \mid f \text{ is continuous (and bounded) and}&\\ 
 \varepsilon(g)\cdot f(x_0,...,x_q)=f(gx_0,...,gx_q)\big\}&
\end{aligned}
$$
and endow it with its homogeneous coboundary operator. 
Then the cohomology of this cocomplex is isometrically isomorphic 
to the corresponding cohomology groups (\cite[Ch. III, Prop. 2.3]{Guichardet} 
and \cite[Cor. 7.4.10]{Monod} respectively). 

It is now easy to describe the left inverses to the restriction map (\ref{equ:Coho Restriction to G+})
induced by the inclusion. Indeed, at the cochain level, they are given by maps 
$$
p: C_{c,(b)}((\mathbb{H}^n)^{q+1},\mathbf{R})^{G^+}\rightarrow C_{c,(b)}((\mathbb{H}^n)^{q+1},\mathbf{R})^G
$$ 
and 
$$
\overline{p}:C_{c,(b)}((\mathbb{H}^n)^{q+1},\mathbf{R})^{G^+}\rightarrow C_{c,(b)}((\mathbb{H}^n)^{q+1},\mathbf{R}_\varepsilon)^G
$$ 
defined for $x_0,...,x_q\in \mathbb{H}^n$ and $f\in C_{c,(b)}((\mathbb{H}^n)^{q+1},\mathbf{R})^{G^+}$ by 
\begin{eqnarray*}
p(f)(x_{0},...,x_{q}) & = & \frac{1}{2}\left(f(x_{0},...,x_{q})+f(\tau x_{0},...,\tau x_{q})\right),\\
\overline{p}(f)(x_{0},...,x_{q}) & = & \frac{1}{2}\left(f(x_{0},...,x_{q})-f(\tau x_{0},...,\tau x_{q})\right)\,,
\end{eqnarray*}
where $\tau\in G\smallsetminus G^{+}$ is any orientation reversing isometry.
In fact, it easily follows from the $G^+$-invariance of $f$ that $p(f)$
is invariant, $\overline{p}(f)$ is equivariant, and both $p(f)$
and $\overline{p}(f)$ are independent of $\tau$ in $G\smallsetminus G^{+}$. 
The following proposition is immediate:

\begin{proposition}\label{prop:decomposition}
The cochain map $(p,\overline{p})$ induces an isometric isomorphism
$$
H_{c,(b)}^{\bullet}(G^{+},\mathbf{R})\cong H_{c,(b)}^{\bullet}(G,\mathbf{R})\oplus H_{c,(b)}^{\bullet}(G,\mathbf{R}_{\varepsilon})\,. 
$$
\end{proposition}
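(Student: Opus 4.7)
The proposition amounts to a direct sum decomposition of the complex $C^{\bullet}_{c,(b)}((\mathbb{H}^n)^{\bullet+1},\mathbf{R})^{G^+}$ into its $\tau$-invariant and $\tau$-antiinvariant subcomplexes (for any fixed $\tau\in G\smallsetminus G^+$), identified with the two target complexes. The plan is to exhibit the inverse cochain map, check mutual inversion, and verify that the norms match.

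The inverse is the sum map $\sigma(h_1,h_2):=h_1+h_2$, landing in $C^{\bullet}_{c,(b)}((\mathbb{H}^n)^{\bullet+1},\mathbf{R})^{G^+}$ since both summands are $G^+$-invariant (as $\varepsilon$ is trivial on $G^+$). First I would check that $p(f)$ is $G$-invariant and $\overline{p}(f)$ is $G$-equivariant: since $\tau^2\in G^+$ and $f$ is $G^+$-invariant, a direct computation yields $p(f)(\tau x)=p(f)(x)$ and $\overline{p}(f)(\tau x)=-\overline{p}(f)(x)$. Independence from the choice of $\tau$ follows from normality of $G^+$ combined with $G^+$-invariance of $f$; the cochain property is automatic since $\delta$ only permutes arguments and thus commutes with precomposition by $\tau$. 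Mutual inversion follows from the splitting identity $f=p(f)+\overline{p}(f)$ and from the facts that a $G$-invariant $h_1$ satisfies $p(h_1)=h_1,\ \overline{p}(h_1)=0$ while a $G$-equivariant $h_2$ satisfies $p(h_2)=0,\ \overline{p}(h_2)=h_2$.

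For the isometry, the obvious bounds $|p(f)(x)|,|\overline{p}(f)(x)|\leq\tfrac{1}{2}(|f(x)|+|f(\tau x)|)\leq\|f\|_\infty$ show that $p$ and $\overline{p}$ are norm-nonincreasing, giving one inequality after taking infima over representatives. The converse rests on the sharper pointwise identity
$$
|p(f)(x)|+|\overline{p}(f)(x)|=\max(|f(x)|,|f(\tau x)|),
$$
obtained from the elementary identity $|a+b|+|a-b|=2\max(|a|,|b|)$ applied with $a=f(x)$ and $b=f(\tau x)$. Summing over $x$ gives $\|f\|_\infty=\sup_x(|p(f)(x)|+|\overline{p}(f)(x)|)$. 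Combined with the observation that varying $f$ within $[\beta]$ makes $p(f)$ and $\overline{p}(f)$ range independently over $[\beta_1]$ and $[\beta_2]$ (as any coboundary decomposes as $\delta h=\delta p(h)+\delta\overline{p}(h)$ with the two pieces freely chosen), this refined identity transfers the norm equality from cochains to cohomology.

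The only step requiring real care is the last: matching the infimum $\inf_{g_1,g_2}\sup_x(|g_1(x)|+|g_2(x)|)$ against the appropriate direct sum seminorm on $H_{c,(b)}^{\bullet}(G,\mathbf{R})\oplus H_{c,(b)}^{\bullet}(G,\mathbf{R}_{\varepsilon})$. Everything else is a routine $\mathbf{Z}/2$-eigenspace decomposition under the order-two automorphism $\tau^*$, with no real difficulty.
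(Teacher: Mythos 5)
Your argument is correct and is exactly the route the paper intends (the paper states the proposition without proof, the content being precisely this $\mathbf{Z}/2$-eigenspace splitting $f=p(f)+\overline{p}(f)$, with the sum map as inverse, on the $\mathbb{H}^n$-cochain complexes that compute the cohomology isometrically). The last step you flag as delicate is not actually needed for the isometry in the sense the paper uses it: since $p$, $\overline{p}$ and the two inclusions are norm non-increasing chain maps whose compositions are identities, each summand embeds isometrically (e.g.\ $\|\omega_n\|_{H^n_c(G^+,\mathbf{R})}=\|\omega_n\|_{H^n_c(G,\mathbf{R}_\varepsilon)}$), which is all that is used later, so there is no need to match your exact expression $\inf_{g_1,g_2}\sup_x\bigl(|g_1(x)|+|g_2(x)|\bigr)$ against a particular direct-sum seminorm.
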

The continuous cohomology group $H_{c}^{\bullet}(G^{+},\mathbf{R})$
is well understood since it can, via the van Est isomorphism \cite[Corollary~7.2]{Guichardet}, be identified
with the de Rham cohomology of the compact dual to $\mathbb{H}^{n}$,
which is the $n$-sphere $S^{n}$. Thus it is generated by two cohomology
classes: the constant class in degree $0$, and the volume form in
degree $n$. Recall that the volume form $\omega_n$ can be represented by the cocycle 
$\mathrm{Vol}_n\in C_{c,b}((\mathbb{H}^n)^{n+1},\mathbf{R}_\varepsilon)^G$ 
(respectively $\mathrm{Vol}_n\in L^\infty((\partial\mathbb{H}^n)^{n+1},\mathbf{R}_\varepsilon)^G$) given by
$$
\mathrm{Vol}_n(x_0,...,x_n)=\mathrm{signed \ volume \ of \ the \ convex \ hull \ of \ }x_0,...,x_n\,,
$$
for $x_0,...,x_n\in \mathbb{H}^n$, respectively $\partial \mathbb{H}^n$. 
Since the constant class in degree $0$ is invariant, and the volume form is equivariant, 
using Proposition~\ref{prop:decomposition} we summarize this as follows: 
$$
H_{c}^{0}(G^{+},\mathbf{R})\cong H_{c}^{0}(G,\mathbf{R})\cong\mathbf{R}
\quad\mathrm{and}\quad H_{c}^{n}(G^{+},\mathbf{R})\cong H_{c}^{n}(G,\mathbf{R}_{\varepsilon})
\cong\mathbf{R}\cong\left\langle \omega_n\right\rangle\,.
$$
All other continuous cohomology groups are $0$. 
On the bounded side, the cohomology groups are still widely unknown, 
though they are conjectured to be isomorphic to their unbounded counterparts. 
The comparison maps for $G$ and $G^+$ are easily seen to be isomorphisms in degree $2$ and $3$ 
(see \cite{BurgerIozzi06}).
We show that the comparison map for the {\it equivariant} cohomology of $G$ is 
indeed an isometric isomorphism up to degree $n$, 
based on the simple Lemma~\ref{lemma: no coboundary} below. 
Before we prove it, it will be convenient to have yet two more cochain complexes
to compute the continuous bounded cohomology groups. 
If $X=\mathbb{H}^n$ or $X=\partial\mathbb{H}^n$, 
consider the cochain space $L^{\infty}(X^{q+1},\mathbf{R}_{(\varepsilon)})^{G} $ 
of $G$-invariant, resp. $G$-equivariant, 
essentially bounded measurable function classes endowed with its homogeneous coboundary operator. 
It is proven in \cite[Cor. 7.5.9]{Monod} that the cohomology of this cocomplex 
is isometrically isomorphic to the continuous bounded cohomology groups.
Note that the volume cocycle $\mathrm{Vol}_n$ represents the same cohomology class viewed
as continuous bounded or $L^\infty$-cocycle on $\mathbb H^n$,
as an $L^\infty$-cocyle on $\partial\mathbb H^n$ or,
by evaluation on $x\in\mathbb H^n$ or $x\in\partial\mathbb H^n$,
as a continuous bounded or $L^\infty$-cocycle on $G$.

\begin{lemma}\label{lemma: no coboundary}
For $q<n$ we have
\begin{eqnarray*}
C_{c}((\mathbb{H}^{n})^{q+1},\mathbf{R}_{\varepsilon})^{G} & = & 0\,,\\
L^{\infty}((\mathbb{H}^{n})^{q+1},\mathbf{R}_{\varepsilon})^{G} & = & 0\, , \\
L^{\infty}((\partial\mathbb{H}^{n})^{q+1},\mathbf{R}_{\varepsilon})^{G} & = & 0\,.
\end{eqnarray*}
\end{lemma}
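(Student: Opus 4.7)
The approach is uniform across the three cases: for $q<n$, every $(q+1)$-tuple of points in $\mathbb{H}^n$ (respectively $\partial\mathbb{H}^n$) admits an orientation-reversing isometry fixing it pointwise. Indeed, $q+1\leq n$ points $x_0,\dots,x_q\in\mathbb{H}^n$ span a geodesic subspace of dimension at most $q\leq n-1$, which therefore sits inside some totally geodesic hyperplane $H$; the reflection $\tau_H$ across $H$ is orientation reversing and fixes each $x_i$. For boundary tuples the analogue holds: $q+1\leq n$ points of $\partial\mathbb{H}^n\cong S^{n-1}$ always lie on a common round $(n-2)$-sphere, which is the ideal boundary of some totally geodesic hyperplane $H$, and the extension of $\tau_H$ to $\partial\mathbb{H}^n$ fixes each $\xi_i$.

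For the continuous statement this geometric input suffices. Given $f\in C_c((\mathbb{H}^n)^{q+1},\mathbf{R}_\varepsilon)^G$ and an arbitrary tuple $\mathbf{x}$, pointwise equivariance under $\tau_H$ yields
$$f(\mathbf{x}) = f(\tau_H\mathbf{x}) = \varepsilon(\tau_H)\,f(\mathbf{x}) = -f(\mathbf{x}),$$
so $f(\mathbf{x})=0$ for all $\mathbf{x}$.

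For the two $L^\infty$ statements the reflection idea still drives the argument, but one must cope with the fact that equivariance is available only a.e.\ for each fixed $g\in G$. The plan is to decompose by $G$-orbits. The $G$-action on $X^{q+1}$, for $X=\mathbb{H}^n$ or $X=\partial\mathbb{H}^n$, is smooth in the Mackey sense: orbits are separated by the matrix of hyperbolic distances, respectively by cross-ratios. Hence any $G$-equivariant class in $L^\infty(X^{q+1},\mathbf{R}_\varepsilon)$ is determined a.e.\ by its restrictions to single orbits, and on an orbit $G\cdot\mathbf{x}\cong G/G_{\mathbf{x}}$ an equivariant $\mathbf{R}_\varepsilon$-valued function corresponds to an element of $(\mathbf{R}_\varepsilon)^{G_{\mathbf{x}}}$. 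For $\mathbf{x}$ in general position the stabilizer $G_{\mathbf{x}}$ is the orthogonal group $O(n-q)$ acting on the normal directions to the affine span; it contains elements of $G$ of determinant $-1$, so $(\mathbf{R}_\varepsilon)^{G_{\mathbf{x}}}=0$ and $f$ vanishes on the generic orbit. Since generic tuples form a set of full measure, $f\equiv 0$ in $L^\infty$.

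The main obstacle is precisely this last step: promoting ``for each $g$, a.e.\ $\mathbf{x}$'' equivariance to the pointwise-on-a-generic-orbit statement just invoked. I would handle it either by invoking the Mackey--Effros disintegration for smooth Borel actions, or more concretely by building a Borel selector $\mathbf{x}\mapsto H(\mathbf{x})$ into the space of totally geodesic hyperplanes (hence a Borel $\mathbf{x}\mapsto\tau_{H(\mathbf{x})}\in G$) and bootstrapping a.e.\ equivariance along this selector by a Fubini argument on a thickening of the stabilizer; the continuous case already done pointwise serves as a template for what the equality should look like.
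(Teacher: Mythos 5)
Your geometric core coincides with the paper's: the authors prove all three vanishing statements with the single observation that any $q+1\leq n$ points of $\mathbb{H}^n$ (or of $\partial\mathbb{H}^n$) lie on a hyperplane (or on the boundary of one), so the reflection $\tau$ in that hyperplane is an element of $G\smallsetminus G^+$ fixing the tuple pointwise, and equivariance gives $f(x_0,\dots,x_q)=-f(\tau x_0,\dots,\tau x_q)=-f(x_0,\dots,x_q)=0$; they apply this argument verbatim to the $L^\infty$ complexes as well, without further measure-theoretic discussion. Your continuous case is exactly this. Where you diverge is the $L^\infty$ case: the detour through smoothness of the $G$-action on $X^{q+1}$, disintegration over the orbit space, and the computation of generic stabilizers is a legitimate strategy (the stabilizer statement is essentially a reformulation of the reflection observation), but it is much heavier than needed, and the step you yourself flag as the main obstacle -- passing from ``for each $g$, a.e.\ $x$'' equivariance to a statement about restrictions to single (null) orbits -- is left as a plan (Mackey--Effros, or a Borel selector of reflections) rather than a proof, so as written the $L^\infty$ part is incomplete. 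The point worth recording is that this obstacle is discharged by one elementary Fubini/Haar-translation argument, with no orbit decomposition or selectors: the function $(g,x)\mapsto f(gx)-\varepsilon(g)f(x)$ is jointly measurable and vanishes for a.e.\ $(g,x)$, so for a.e.\ tuple $x$ the set $A_x=\{g\in G:\,f(gx)=\varepsilon(g)f(x)\}$ is conull in $G$; fixing such an $x$ and the reflection $\tau$ with $\tau x=x$, the set $A_x\cap A_x\tau^{-1}$ is conull, and any $g$ in it yields $\varepsilon(g)f(x)=f(gx)=f(g\tau x)=\varepsilon(g\tau)f(x)=-\varepsilon(g)f(x)$, hence $f(x)=0$ for a.e.\ $x$, i.e.\ $f=0$ in $L^\infty$. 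In other words, the reflection you construct in your first paragraph already does all the work in the measurable setting too; the stabilizer/disintegration machinery buys nothing here, and if you keep it you must actually carry out the disintegration step you postponed.
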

\begin{proof}
Let $f:(\mathbb{H}^{n})^{q+1}\rightarrow\mathbf{R}_\varepsilon$ or 
$f:(\partial\mathbb{H}^{n})^{q+1}\rightarrow\mathbf{R}_\varepsilon$
be $G$-equivariant. The lemma relies on the simple observation that
any $q+1\leq n$ points $x_{0},...,x_{q}$  either in $\mathbb{H}^{n}$
or in $\partial\mathbb{H}^{n}$ lie either on a hyperplane $P\subset\mathbb{H}^{n}$
or on the boundary of a hyperplane. Thus there exists a reversing
orientation isometry $\tau\in G\smallsetminus G^{+}$ fixing $(x_0,\dots,x_q)$ pointwise.
Using the $G$-equivariance of $f$ we conclude that
$$
f(x_{0},...,x_{q})=-f(\tau x_{0},...,\tau x_{q})=-f(x_{0},...,x_{q})\,,
$$
which implies $f\equiv0$.\qed
\end{proof}
It  follows from the lemma that 
$H_{c,b}^{q}(G,\mathbf{R}_{\varepsilon})\cong H_{c}^{q}(G,\mathbf{R}_{\varepsilon})=0$
for $q<n$. Furthermore, we can conclude that the comparison map for
the equivariant cohomology of $G$ is injective: 

\begin{proposition}\label{prop: compmapisomdegn}
The comparison map induces an isometric isomorphism 
$$
\xymatrix@1
{H_{c,b}^{n}(G,\mathbf{R}_{\varepsilon})\ar[r]^\cong 
&H_{c}^{n}(G,\mathbf{R}_{\varepsilon})\,.
}
$$
\end{proposition}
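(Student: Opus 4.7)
The plan is to use that both cohomology theories in question can be computed on the simplicial-type resolution $C_{c,(b)}\bigl((\mathbb{H}^n)^{\bullet+1},\mathbf{R}_\varepsilon\bigr)^G$ (by Guichardet and by Monod, as recalled in the excerpt), with the comparison map induced by the inclusion of complexes. The surjectivity direction is essentially for free: the cocycle $\mathrm{Vol}_n$ is bounded because the volume of any simplex in $\mathbb{H}^n$ is bounded by the volume $v_n$ of a regular ideal simplex; therefore the generator $\omega_n$ of $H_c^n(G,\mathbf{R}_\varepsilon)\cong\mathbf{R}$ lies in the image of the comparison map.

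For injectivity, I would use Lemma~\ref{lemma: no coboundary} precisely at degree $n-1$, which gives $C_c\bigl((\mathbb{H}^n)^{n},\mathbf{R}_\varepsilon\bigr)^G=0$. Let $f\in C_{c,b}\bigl((\mathbb{H}^n)^{n+1},\mathbf{R}_\varepsilon\bigr)^G$ be a bounded cocycle whose class lies in the kernel of the comparison map. Then $f=\delta g$ for some $g\in C_c\bigl((\mathbb{H}^n)^{n},\mathbf{R}_\varepsilon\bigr)^G$. By the lemma $g=0$, hence $f=0$, which is a stronger statement than needed: the bounded cohomology class is trivial at the level of cochains.

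The isometry statement then follows from the same vanishing. The comparison map is always norm-decreasing, so it remains to show $\|\beta\|\leq\|c(\beta)\|$ for every $\beta\in H_{c,b}^n(G,\mathbf{R}_\varepsilon)$. Given $\varepsilon'>0$, pick a bounded representative $f$ of the unbounded class $c(\beta)$ with $\|f\|_\infty\leq \|c(\beta)\|+\varepsilon'$; such an $f$ exists because surjectivity together with the fact that $H_c^n(G,\mathbf{R}_\varepsilon)$ is one-dimensional implies that every unbounded class in the image of $c$ admits bounded representatives whose norms approximate the seminorm. Choosing also any bounded cocycle $h$ representing $\beta$, the difference $f-h$ is an unbounded coboundary, so by the previous paragraph $f=h$, whence $f$ itself represents $\beta$ and $\|\beta\|\leq \|f\|_\infty\leq \|c(\beta)\|+\varepsilon'$.

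The only slightly delicate point, and the one I would write out most carefully, is the last step — specifically, why one may approximate the unbounded seminorm $\|c(\beta)\|$ by norms of \emph{bounded} representatives. This is where the explicit generator $\omega_n$ being already bounded is used: since $H_c^n(G,\mathbf{R}_\varepsilon)\cong\mathbf{R}\langle\omega_n\rangle$ and bounded cocycles form a linear subspace, every class $c(\beta)=t\,\omega_n$ is represented by the bounded cocycle $t\,\mathrm{Vol}_n$, and the infimum computing $\|c(\beta)\|$ can thus be taken over bounded representatives. No other step presents a genuine obstacle: the whole argument is a formal consequence of the vanishing in Lemma~\ref{lemma: no coboundary} together with the boundedness of $\mathrm{Vol}_n$.
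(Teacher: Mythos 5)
Your proposal is correct and follows essentially the same route as the paper: everything rests on Lemma~\ref{lemma: no coboundary} in degree $n-1$ (so there are no coboundaries in degree $n$ and each class has a unique cocycle representative) together with the fact that $H_c^n(G,\mathbf{R}_\varepsilon)$ is one-dimensional, generated by $\omega_n$, whose representative $\mathrm{Vol}_n$ is bounded. Your $\varepsilon'$-argument for the isometry is just a more roundabout way of saying what the paper observes directly, namely that both seminorms are computed on the unique cocycle representing the class.
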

\begin{proof}
Since there are no cochains in degree $n-1$, there are no coboundaries
in degree $n$ and the cohomology groups $H_{c,b}^{n}(G,\mathbf{R}_{\varepsilon})$
and $H_{c}^{n}(G,\mathbf{R}_{\varepsilon})$ are equal to the corresponding spaces of cocycles. 
Thus, we have a commutative diagram
\begin{equation*}
\xymatrix{ H^n_{c,b}(G,\mathbf{R}_\varepsilon) \ar@{=}[r] \ar[d] & \mathrm{Ker}
\{ \delta: C_{c,b}(( \mathbb{H}^n)^{n+1} ,\mathbf{R}_\varepsilon)^G\rightarrow 
C_{c,b}(( \mathbb{H}^n)^{n+2},\mathbf{R}_\varepsilon)^G \} \ar@{^{(}->}[d]\\
\mathbf{R}\cong H^n_{c}(G,\mathbf{R}_\varepsilon) \ar@{=}[r] & \mathrm{Ker}
\{ \delta:C_{c}((\mathbb{H}^n)^{n+1} ,\mathbf{R}_\varepsilon)^G\rightarrow 
C_{c}(( \mathbb{H}^n)^{n+2},\mathbf{R}_\varepsilon)^G \}},
\end{equation*}
and the proposition follows from the fact that 
the lower right kernel is generated by the volume form $\omega_n$ 
which is represented by the bounded cocycle $\mathrm{Vol}_n$ which is in the image of the vertical right inclusion.
\qed
\end{proof}

Since there are no coboundaries in degree $n$ in $C_{c}((\mathbb{H}^{n})^{q+1},\mathbf{R}_{\varepsilon})^{G} $, 
it follows that the cohomology norm of $\omega_n$ is equal 
to the norm of the unique cocycle representing it. 
In view of \cite{HaagMunk}, 
its norm is equal to the volume $v_n$ of an ideal regular simplex in $\mathbb{H}^n$.

\begin{corollary}
The norm $\left\Vert \omega_{n}\right\Vert $
of the volume form $\omega_{n}\in H_{c}^{n}(G^{+},\mathbf{R})$
is equal to the volume $v_n$ of a regular ideal simplex in $\mathbb{H}^{n}$.
\end{corollary}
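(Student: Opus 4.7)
The plan is to read off the norm of $\omega_n$ from the chain of isometric identifications already established, reducing the computation to the sup norm of the representing cocycle $\mathrm{Vol}_n$, and then invoke the Haagerup--Munkholm theorem for the geometric estimate.

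First, since $\mathrm{Vol}_n$ is $G$-equivariant rather than $G$-invariant, the displayed formula summarizing the continuous cohomology of $G^+$ (a consequence of Proposition~\ref{prop:decomposition} together with van Est) tells us that $\omega_n$ lies entirely in the $\mathbf{R}_\varepsilon$ summand. As Proposition~\ref{prop:decomposition} is an \emph{isometric} decomposition, this yields $\|\omega_n\|_{H^n_c(G^+,\mathbf{R})} = \|\omega_n\|_{H^n_c(G,\mathbf{R}_\varepsilon)}$.

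Second, Proposition~\ref{prop: compmapisomdegn} says the comparison map $H^n_{c,b}(G,\mathbf{R}_\varepsilon) \to H^n_c(G,\mathbf{R}_\varepsilon)$ is an isometric isomorphism, so this norm equals $\|\omega_n^b\|$ in bounded cohomology. I would then pass to the $L^\infty$-cocomplex on $\partial\mathbb{H}^n$, which also computes $H^\bullet_{c,b}(G,\mathbf{R}_\varepsilon)$ isometrically and in which $\omega_n^b$ is still represented by $\mathrm{Vol}_n$ (as noted in the remark following the introduction of the $L^\infty$-complex). By Lemma~\ref{lemma: no coboundary} the space $L^\infty((\partial\mathbb{H}^n)^{n},\mathbf{R}_\varepsilon)^G$ of $(n-1)$-cochains vanishes, so there are no coboundaries in degree $n$; hence $\mathrm{Vol}_n$ is the unique cocycle in its class, and
\[
\|\omega_n\| = \|\omega_n^b\| = \|\mathrm{Vol}_n\|_\infty = \operatorname*{ess\,sup}_{(\xi_0,\dots,\xi_n) \in (\partial\mathbb{H}^n)^{n+1}} |\mathrm{Vol}_n(\xi_0,\dots,\xi_n)|.
\]

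Third, invoke the Haagerup--Munkholm theorem \cite{HaagMunk}, which asserts that the signed volume of an ideal simplex in $\mathbb{H}^n$ is bounded in absolute value by $v_n$, with equality realized precisely by the regular ideal simplex. Combining the three steps gives $\|\omega_n\| = v_n$.

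The only genuinely nontrivial ingredient is the last, purely geometric, estimate on volumes of ideal simplices, and this is quoted from \cite{HaagMunk} rather than reproved. The role of this corollary is therefore to perform the cohomological bookkeeping in the first two steps: without the isometric character of Propositions~\ref{prop:decomposition} and \ref{prop: compmapisomdegn} and the vanishing in Lemma~\ref{lemma: no coboundary}, one would only get the a priori weaker bound $\|\omega_n\| \leq \|\mathrm{Vol}_n\|_\infty$, and one would have to rule out the possibility that some other, strictly smaller, cocycle represents $\omega_n$.
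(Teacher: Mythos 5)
Your proof is correct and follows essentially the paper's own argument: the absence of coboundaries in degree $n$ (Lemma~\ref{lemma: no coboundary}) forces $\mathrm{Vol}_n$ to be the unique representing cocycle, so the class norm is its sup norm, which is $v_n$ by Haagerup--Munkholm. The only difference is your detour through $H^n_{c,b}(G,\mathbf{R}_\varepsilon)$ and the boundary $L^\infty$-complex, which is valid but unnecessary: the paper applies the no-coboundaries observation directly in the unbounded continuous complex $C_c((\mathbb{H}^n)^{\bullet+1},\mathbf{R}_\varepsilon)^G$, where the same uniqueness argument gives $\|\omega_n\|=\|\mathrm{Vol}_n\|_\infty=v_n$ without invoking Proposition~\ref{prop: compmapisomdegn}.
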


As the cohomology norm $\| \omega_n\|$ is the proportionality constant 
between simplicial and Riemannian volume for closed hyperbolic manifolds \cite[Theorem 2]{BucherProp}, 
the corollary gives a simple proof of the proportionality principle 
$\| M\| =\mathrm{Vol}(M)/v_n$ for closed hyperbolic manifolds, 
originally due to Gromov and Thurston.

\section{Relative Cohomology}

\subsection{Notation and Definitions}

As mentioned in the introduction, 
we consider a compact core $N$ of the complete hyperbolic manifold $M$,
that is a subset of $M$ whose complement $M\smallsetminus N$ in $M$
is a disjoint union of finitely many geodesically convex cusps of $M$. 
If $q\geq0$ and $\sigma:\Delta^q\to M$ denotes a singular simplex, 
where 
$\Delta^q=\{(t_0,\dots, t_q)\in\mathbf{R}^{q+1}:\,\sum_{j=0}^qt_j=1, \,t_j\geq0\text{ for all }j\}$ 
is a standard $q$-simplex,
we recall that the (singular) cohomology $H^q(M,M\smallsetminus N)$ of $M$ 
relative to $M\smallsetminus N$ is the cohomology of the cocomplex
$$
C^q(M,M\smallsetminus N)=\{f\in C^q(M)\mid f(\sigma)=0 
\mathrm{ \ if \ } \mathrm{Im}(\sigma)\subset M\smallsetminus N\}
$$
endowed with its usual coboundary operator. 
(Here, $C^q(M)$ denotes the space of singular $q$-cochains on $M$.) 
We emphasize that all cohomology groups, singular or relative, are with $\mathbf{R}$ coefficients.
The bounded relative cochains $C^q_b(M,M\smallsetminus N)$ are 
those for which $f$ is further assumed to be bounded, 
meaning that $\mathrm{sup}\{|f(\sigma)|\mid \sigma:\Delta^q\rightarrow M\}$ is finite. 
The coboundary restricts to bounded cochains and the cohomology of that cocomplex 
is the bounded cohomology of $M$ relative to $M\smallsetminus N$, 
which we denote by $H^\bullet_b(M,M\smallsetminus N)$. 
The inclusion of cocomplexes induces a comparison map 
$c: H^\bullet_b(M,M\smallsetminus N)\rightarrow H^\bullet(M,M\smallsetminus N)$. 
Similarly, we could define the cohomology of $N$ relative to its boundary $\partial N$ and 
it is clear, by homotopy invariance, 
that $H^\bullet_{(b)}(N,\partial N)\cong H^\bullet_{(b)}(M,M\smallsetminus N)$. 
We can identify the relative cochains on $(M,M\smallsetminus N)$ 
with the $\Gamma$-invariant relative cochains $C^q(\mathbb{H}^n,U)^\Gamma$ 
on the universal cover $\mathbb{H}^n$ relative to the preimage $U=\pi^{-1}(M\smallsetminus N)$ 
under the covering map $\pi:\mathbb{H}^n\rightarrow M$ 
of the finite union of horocyclic neighborhoods of cusps. 
We will identify $H^\bullet_{(b)}(N,\partial N)$ with the latter cohomology group. 
Note that  $U$ is a countable union of disjoint horoballs. 

The inclusion $(M,\varnothing)\hookrightarrow (M,M\smallsetminus N) $ 
induces a long exact sequence on both the unbounded and bounded cohomology groups
$$\dots\longrightarrow  H^{\bullet-1}_{(b)}(M\smallsetminus N)\longrightarrow 
H^{\bullet}_{(b)}(M,M\smallsetminus N)\longrightarrow 
H^{\bullet}_{(b)}(M)\longrightarrow 
H^{\bullet}_{(b)}(M\smallsetminus N)\longrightarrow 
\dots
$$
Each connected component $E_j$ of $M\smallsetminus N$, $1\leq j\leq k$, is a horocyclic 
neighborhood of a cusp, hence homeomorphic to the product 
of $\mathbf R$ with a torus; thus its universal covering is contractible and its
fundamental group is abelian (hence amenable).  
It follows that (see the introduction or \cite{Gromov_82, Brooks})
$H^\bullet_b(E_j)\cong H^\bullet_b(\pi_1(E_j))=0$ and hence 
$H^\bullet_b(M\smallsetminus N)=0$, 
proving that the inclusion $(M,\varnothing)\hookrightarrow (M,M\smallsetminus N) $ 
induces an isomorphism on the bounded cohomology groups. 
Note that based on some techniques developed in \cite{BucherKimKim} 
we can show that this isomorphism is isometric - 
a fact that we will not need in this note. 

\subsection{Transfer Maps}
In the following we identify $\Gamma$ with its image $i(\Gamma)<G^+$ 
under the lattice embedding $i:\Gamma\to G^+$.
There exist natural transfer maps 
\begin{equation*}
\xymatrix{ H^\bullet_b(\Gamma) \ar[r]^{\operatorname{trans}_\Gamma \ \ }
& H^\bullet_{c,b}(G,\mathbf{R}_\varepsilon) 
& \mathrm{and} 
& H^\bullet(N,\partial N)\ar[r]^{\tau_{dR}\ \  }
& H^\bullet_{c}(G,\mathbf{R}_\varepsilon),  }
\end{equation*}
whose classical constructions we briefly recall here. 
The aim of this section will then be to establish the commutativity of the diagram 
(\ref{equ: CD with trans}) in Proposition~\ref{prop: comm}. 
The proof is similar to that in \cite{BucherKimKim}, 
except that we replace the compact support cohomology by the relative cohomology, 
which leads to some simplifications. 
In fact, the same proof as in \cite{Burger_Iozzi_Wienhard_toledo}
(from where the use of relative bounded cohomology is borrowed)
would have worked {\em verbatim} in this case, 
but we chose the other (and simpler), 
to provide a ``measure homology-free'' proof.  

\subsubsection*{The Transfer Map 
$\operatorname{trans}_\Gamma:H^\bullet_b(\Gamma)\rightarrow H^\bullet_{c,b}(G,\mathbf{R}_\varepsilon)$}
We can define the transfer map at the cochain level either as a map 
$$
\operatorname{trans}_\Gamma:V_q^\Gamma\to V_{q}^G\,,
$$
where $V_q$ is one of $C_{b}((\mathbb{H}^n)^{q+1},\mathbf{R})$,
$L^\infty((\mathbb{H}^n)^{q+1},\mathbf{R})$
or $L^\infty((\partial\mathbb{H}^n)^{q+1},\mathbf{R})$.
The definition is the same in all cases. 
Let thus $c$ be a $\Gamma$-invariant cochain in $V_q^\Gamma$. Set
\begin{equation}\label{trans}
\operatorname{trans}_\Gamma(c)(x_0,...,x_n):=
\int_{\Gamma\backslash G} \varepsilon (\dot g^{-1})\cdot c(\dot gx_0,...,\dot gx_n)d\mu(\dot g)\,,
\end{equation}
where $\mu$ is the invariant probability measure on $\Gamma\backslash G$ normalized so that 
$\mu(\Gamma\backslash G)=1$. Recall that $\Gamma<G^+$, so that $\varepsilon(\dot g)$ is well defined.
It is easy to check that 
the resulting cochain $\operatorname{trans}_{\Gamma}(c)$ is $G$-equivariant. 
Furthermore, the transfer map clearly commutes with the coboundary operator, 
and hence induces a cohomology map
$$
\xymatrix{H^\bullet_b(\Gamma) \ar[r]^-{\operatorname{trans}_\Gamma \ \ }& H^\bullet_{c,b}(G,\mathbf{R}_\varepsilon)\,.}
$$
Note that if the cochain $c$ is already $G$-equivariant, 
then $\operatorname{trans}_\Gamma(c)=c$, showing that $\operatorname{trans}_\Gamma$ 
is a left inverse of $i^*:H^\bullet_{c,b}(G,\mathbf{R}_\varepsilon)\rightarrow H^\bullet_b(\Gamma)$.

\subsubsection*{The Transfer Map  
$\tau_{dR}:H^\bullet(N,\partial N)\rightarrow H^\bullet_{c}(G,\mathbf{R}_\varepsilon)$}

The relative de Rham cohomology  $H^\bullet_{dR}(M,M\smallsetminus N)$ 
is the cohomology of the cocomplex of differential forms $\Omega^q(M,M\smallsetminus N)$ 
which vanish when restricted to $M\smallsetminus N$. 
Then, as for usual cohomology, there is a de Rham Theorem 
$$
\xymatrix@1{
\Psi:H^\bullet_{dR}(M,M\smallsetminus N )\ar[r]^-\cong 
&H^\bullet(M,M\smallsetminus N)\cong H^\bullet(N,\partial N)}
$$ for relative cohomology. 
The isomorphism is given at the cochain level by integration. In order to integrate, 
we could either replace the singular cohomology by its smooth variant 
(i.e. take smooth singular simplices), 
or we prefer here to integrate the differential form on the straightened simplices. 
(The geodesic straightening of a continuous simplex is always smooth.) 
Thus, at the cochain level, the isomorphism is induced by the map 
\begin{equation}\label{psi}
\Psi: \Omega^q(M,M\smallsetminus N) \longrightarrow C^q(M,M\smallsetminus N)\,,
\end{equation}
sending a differential form 
$\omega\in \Omega^q(M,M\smallsetminus N)\cong \Omega^q(\mathbb{H}^n,U)^\Gamma$ 
to the singular cochain $\Psi(\omega)$ given by 
$$
\sigma \mapsto \int_{\pi_* \mathrm{straight}(x_0,...,x_q)} \omega\,,
$$
where $\pi:\mathbb{H}^n\rightarrow M$ is the canonical projection, 
the $x_i\in \mathbb{H}^n$ are the vertices of a lift of $\sigma$ to $\mathbb{H}^n$, and 
$\mathrm{straight}(x_0,...,x_q):\Delta^q\rightarrow \mathbb{H}^n$ is the geodesic straightening. 
Observe that if $\sigma$ is in $U$, then the straightened simplex is as well, 
since all components of $U$ are geodesically convex.

The transfer map 
$\operatorname{trans}_{dR}:H^\bullet_{dR}(M,M\smallsetminus N)\rightarrow H^\bullet_c(G,\mathbb{R_\varepsilon})$ 
is defined through the relative de Rham cohomology 
and the van Est isomorphism. 
At the cochain level the transfer
$$
\operatorname{trans}_{dR}:\Omega^q(\mathbb{H}^n,U)^\Gamma \longrightarrow \Omega^q(\mathbb{H}^n,\mathbf{R}_\varepsilon)^G
$$
is defined by sending the differential $q$-form $\alpha\in \Omega^q(\mathbb{H}^n)^\Gamma$ to the form 
$$
\operatorname{trans}_{dR}(\alpha):=\int_{\Gamma\backslash G} \varepsilon(\dot g^{-1})\cdot( \dot g^*\alpha) d\mu(\dot g)\,,
$$
where $\mu$ is chosen as in \eqref{trans}. 
It is easy to check that the resulting differential form 
$\operatorname{trans}_{dR}(\alpha)$ is $G$-equivariant. 
Furthermore, the transfer map clearly commutes with the differential operator, and 
hence induces a cohomology map
$$
\xymatrix{
H^\bullet(N,\partial N)
& 
&H^\bullet_c(G,\mathbf{R}_\varepsilon)\\
H^\bullet_{dR}(M,M\smallsetminus N)\ar[u]_{\cong}^{\Psi}\ar[r]^-{\operatorname{trans}_{dR}}
&H^\bullet(\Omega^\bullet(\mathbb{H}^n,\mathbf{R}_\varepsilon)^G)\ar[r]^-= 
&\Omega^\bullet(\mathbb{H}^n,\mathbf{R}_\varepsilon)^G\,,\ar[u]_-\cong
}
$$
where the vertical arrow on the right is the van Est isomorphism
and the horizontal arrow on the right follows
from Cartan's lemma to the extent that any $G$-invariant differential form
on $\mathbb{H}^n$ (or more generally on a symmetric space) is closed.

Let $\omega_{N,\partial N}\in H^n(M,M\smallsetminus N)$ be the unique class with 
$\langle \omega_{N,\partial N},[N,\partial N]\rangle=\mathrm{Vol}(M)$. 
It is easy to check that 
\begin{equation}\label{volume of M}
\operatorname{trans}_{dR}(\omega_{N,\partial N})=\omega_n\in\Omega^n(\mathbb{H}^n,\mathbb{R}_\varepsilon)^G
\cong H^n_c(G,\mathbf{R}_\varepsilon)\,.
\end{equation}

\subsubsection*{Commutativity of the Transfer Maps}

\begin{proposition}\label{prop: comm} The diagram 
\begin{equation}\label{equ: CD with trans}
\xymatrix{ H^q_b(\Gamma) \ar[rd]^-{\operatorname{trans}_\Gamma} &\\
H^q_{b}(N,\partial N) \ar[u]^-{\cong}\ar[d]^-{c}& H^q_{c,b}(G,\mathbf{R}_\varepsilon)\ar[d]_-{c} \\
H^q(N,\partial N)\ar[r]^-{\tau_{dR}}& H^q_{c}(G,\mathbf{R}_\varepsilon).}
\end{equation}
commutes, where $\tau_{dR}=\operatorname{trans}_{dR}\circ\Psi^{-1}$. 
\end{proposition}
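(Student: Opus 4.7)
The plan is to verify commutativity at the cochain level, by exhibiting common cochain representatives for both compositions $c\circ\operatorname{trans}_\Gamma\circ(\text{upward iso})$ and $\tau_{dR}\circ c$, and showing they agree modulo a coboundary in the $G$-equivariant cochain complex on $(\mathbb{H}^n)^{q+1}$.

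\textbf{Cochain-level setup.} Fix a bounded relative cochain $f\in C^q_b(N,\partial N)$ representing $\alpha\in H^q_b(N,\partial N)$, regarded as a $\Gamma$-invariant bounded cochain on singular simplices of $\mathbb{H}^n$ which vanishes on simplices contained in $U=\pi^{-1}(M\smallsetminus N)$. The upward isomorphism $H^q_b(N,\partial N)\cong H^q_b(\Gamma)$ is realized at the cochain level by geodesic straightening: $f$ is sent to
$$F(x_0,\ldots,x_q):=f\bigl(\mathrm{straight}(x_0,\ldots,x_q)\bigr),$$
which defines a $\Gamma$-invariant bounded cochain in $C_b((\mathbb{H}^n)^{q+1})^\Gamma$, and this cochain model computes $H^\bullet_b(\Gamma)$ compatibly with the transfer formula \eqref{trans}. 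On the other hand, pick $\omega:=\Psi^{-1}(c([f]))\in\Omega^q(\mathbb{H}^n,U)^\Gamma$, so that $\Psi(\omega)=f+\delta\beta$ in $C^q(M,M\smallsetminus N)$ for some $\beta$.

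\textbf{Fubini comparison.} Evaluating the $G$-equivariant form $\operatorname{trans}_{dR}(\omega)$ against straightened simplices realizes the van Est identification $\Omega^\bullet(\mathbb{H}^n,\mathbf{R}_\varepsilon)^G\cong H^\bullet_c(G,\mathbf{R}_\varepsilon)$ in the cochain model on $(\mathbb{H}^n)^{q+1}$; hence it suffices to compare
$$\int_{\mathrm{straight}(x_0,\ldots,x_q)}\operatorname{trans}_{dR}(\omega)\quad\text{with}\quad\operatorname{trans}_\Gamma(F)(x_0,\ldots,x_q).$$
Using that isometries preserve geodesics, so $\dot g\cdot\mathrm{straight}(x_0,\ldots,x_q)=\mathrm{straight}(\dot g x_0,\ldots,\dot g x_q)$, Fubini's theorem (justified by boundedness of $f$ and finiteness of $\mu$) rewrites the left-hand side as
$$\int_{\Gamma\backslash G}\varepsilon(\dot g^{-1})\,\Psi(\omega)\bigl(\mathrm{straight}(\dot g x_0,\ldots,\dot g x_q)\bigr)\,d\mu(\dot g),$$
which differs from $\operatorname{trans}_\Gamma(F)(x_0,\ldots,x_q)$ by $\operatorname{trans}_\Gamma$ applied to the straightening-pullback of $\delta\beta$, i.e.\ by the explicit coboundary $\delta\operatorname{trans}_\Gamma(B)$ with $B(y_0,\ldots,y_{q-1}):=\beta(\mathrm{straight}(y_0,\ldots,y_{q-1}))$. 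The two paths therefore define the same class in $H^q_c(G,\mathbf{R}_\varepsilon)$.

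\textbf{Main obstacle.} The principal technical point is pinning down the cochain-level realization of the upward isomorphism $H^q_b(N,\partial N)\to H^q_b(\Gamma)$ through straightening, and verifying that it is compatible with the $(\mathbb{H}^n)^{q+1}$-resolution used to define $\operatorname{trans}_\Gamma$; in particular one must track how the relative class lifts through the long exact sequence for $(M,M\smallsetminus N)$ to a bounded group-cohomology class. Once this compatibility is in place, the rest is the Fubini-type interchange of integration already familiar from \cite{BucherKimKim}.
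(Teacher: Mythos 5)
Your overall skeleton (compare both compositions at the cochain level on $(\mathbb{H}^n)^{q+1}$, with the de Rham/group discrepancy absorbed into an explicit coboundary) is the same as the paper's, but there is a genuine gap at exactly the point you flag as the ``main obstacle'', and it is not a routine compatibility check. You propose to realize the isomorphism $H^q_b(N,\partial N)\cong H^q_b(\Gamma)$ by sending a relative singular cochain $f$ to $F(x_0,\dots,x_q)=f(\mathrm{straight}(x_0,\dots,x_q))$ and then applying $\operatorname{trans}_\Gamma$. But a singular cochain is an arbitrary function on the set of singular simplices, so the function $\dot g\mapsto f(\mathrm{straight}(\dot gx_0,\dots,\dot gx_q))$ has no measurability in $\dot g$ whatsoever; the integral over $\Gamma\backslash G$ defining $\operatorname{trans}_\Gamma(F)$ is simply not defined, and the same problem hits the correction term $\operatorname{trans}_\Gamma(B)$ with $B=\beta\circ\mathrm{straight}$ coming from $\Psi(\omega)=f+\delta\beta$. (Boundedness of $f$ and finiteness of $\mu$ do not help: the transfer is only defined on the continuous or $L^\infty$ complexes, and $F$ lands in neither.) This is precisely the difficulty the paper isolates: ``we need to be able to integrate our cochain on translates of a singular simplex\dots This is only possible if the cochain is regular enough.''

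The paper's resolution is the missing ingredient in your argument: choose base points $b_0,\dots,b_k$ in the core and in each cusp neighborhood, lift to a $\Gamma$-equivariant \emph{measurable} map $\beta:\mathbb{H}^n\to\pi^{-1}(\{b_0,\dots,b_k\})$ built from fundamental domains, and replace your $F$ by $\beta^*(c)(x_0,\dots,x_q)=c(\mathrm{straight}(\beta(x_0),\dots,\beta(x_q)))$. Because $\beta$ has countable image and measurable level sets, $\beta^*(c)$ is measurable; because $\beta$ preserves horoballs, horoballs are convex, and $c$ is a relative cochain, the integrand vanishes once $\dot gx_0$ leaves a $D$-neighborhood of the core, so the integral is a finite sum and $\operatorname{trans}_\beta$ is well defined on both the bounded and unbounded relative complexes. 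With this regularization the upper triangle commutes on the nose (since $\beta^*$ itself realizes the isomorphism $H^q_b(N,\partial N)\cong H^q_b(\Gamma)$), the middle square is trivial, and the lower square commutes up to an explicit prism coboundary of the form you anticipated, namely
\begin{equation*}
(x_0,\dots,x_{q-1})\mapsto\sum_{i=0}^{q-1}(-1)^i\int_{\Gamma\backslash G}\varepsilon(\dot g^{-1})\left(\int_{\mathrm{straight}(\dot gx_0,\dots,\dot gx_i,\beta(\dot gx_i),\dots,\beta(\dot gx_{q-1}))}\alpha\right)d\mu(\dot g)\,,
\end{equation*}
which compares vertices $\dot gx_i$ with their discretizations $\beta(\dot gx_i)$ rather than invoking a primitive $\beta$ of $\Psi(\omega)-f$. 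Without some such regularization (or an alternative model, e.g.\ the measure-homology route of the surface-group paper the authors mention), your Fubini comparison cannot be carried out, so as written the proof does not go through.
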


\begin{proof} The idea of the proof is to subdivide the diagram (\ref{equ: CD with trans}) 
in smaller parts, 
by defining transfer maps directly on the bounded and unbounded relative
singular cohomology of $M$ 
and show that each of the following subdiagrams commute. 

\begin{equation}\label{equ: CD with all trans}
\xymatrix{ H^q_b(\Gamma) \ar[rd]^{\operatorname{trans}_\Gamma}&\\
H^q_{b}(N,\partial N)  \ar[u]^{\cong}\ar@{-->}[r]^{\operatorname{trans}_b}\ar[d]^{c}& H^q_{c,b}(G,\mathbf{R}_\varepsilon)\ar[d]_{c} \\
H^q(N,\partial N)\ar@{-->}[r]^{\operatorname{trans}}& H^q_{c}(G,\mathbf{R}_\varepsilon) \\
H^q_{dR}(N,\partial N)\ar[r]^{\operatorname{trans}_{dR}} \ar[u]^\cong_\Psi& \Omega^q(\mathbb{H}^n,\mathbf{R}_\varepsilon)^G.\ar[u]_\cong^\Phi}
\end{equation}

\subsubsection*{Definition of the Transfer Map for Relative Cohomology} 
In order to define a transfer map, 
we need to be able to integrate our cochain on translates of a singular simplex 
by elements of $\Gamma\backslash G$. This is only possible if the cochain is regular enough. 

For $1\leq i \leq k$, pick a point $b_i\in E_i$ in each horocyclic neighborhood of a cusp in $M$ and $b_0\in N$ in the compact core. 
Let $\beta':M\rightarrow \{b_0,b_1,...,b_k\}$ be the measurable map 
sending $N$ to $b_0$ and each cusp $E_i$ to $b_i$. 
Lift $\beta'$ to a $\Gamma$-equivariant measurable map
$$
\beta:\mathbb{H}^n\longrightarrow\pi^{-1}(\{b_0,b_1,...,b_k\})\subset \mathbb{H}^n
$$
defined as follows.  Choose lifts $\tilde b_0,\dots, \tilde b_k$ of $b_0,\dots, b_k$;
for each $j=1,\dots, k$ choose a Borel fundamental domain $\mathcal D_j\ni\tilde b_j$
for the $\Gamma$-action on $\pi^{-1}(E_j)$ and choose a fundamental domain
$\mathcal D_0\ni\tilde b_0$ for the $\Gamma$-action on $\pi^{-1}(N)$.  Now define
$\beta(\gamma\mathcal D_j):=\gamma\tilde b_j$.  
In particular $\beta$  maps each horoball into itself. 
Given $c\in C^q(\mathbb{H}^n,U)^\Gamma$, define
$$\beta^*(c):(\mathbb{H}^n)^{q+1}\longrightarrow \mathbf{R}$$
by
\begin{equation}\label{beta-ast}
\beta^*(c)(x_0,...,x_q)=c(\mathrm{straight}(\beta(x_0),...,\beta(x_q)))\,.
\end{equation}
Remark that $\beta^*(c)$ is $\Gamma$-invariant, vanishes on tuples of points 
that lie in the same horoball in the disjoint union of horoballs $\pi^{-1}(E_i)$,
and is independent of the chosen lift of $\beta'$ 
(but not of the points $b_0,...,b_k$).
Thus, $\beta^*(c)$ is a cochain in $C^q(\mathbb{H}^n,U)^\Gamma$ 
which is now measurable, so that we can integrate it on translates of a given $(q+1)$-tuple of point.
We define
$$
\operatorname{trans}_\beta(c):(\mathbb{H}^n)^{q+1}\longrightarrow \mathbf{R}
$$
by
$$
\operatorname{trans}_\beta(c)(x_0,...,x_q)
:=\int_{\Gamma\backslash G} \varepsilon(\dot g^{-1})\cdot \left( \beta^*(c)(\dot gx_0,...,\dot gx_q)\right)d\mu(\dot g)\,,
$$
where $\mu$ is as in \eqref{trans}.
It is easy to show that the integral is finite. Indeed, let $D$ be the maximum of the distances 
between $x_0$ and $x_i$, for $i=1,...,q$. Then for $\dot g \in \Gamma\backslash G$ 
such that $\dot g x_0$ lies outside a $D$-neighborhood of the compact core $N$, 
each $\dot gx_i$ clearly lies outside $N$ and hence $\beta^*(c)(\dot gx_0,...,\dot gx_q)$ 
vanishes for such $\dot g$. It follows that the integrand vanishes outside
a compact set, within which it takes only finitely many values. 
Furthermore, it follows from the $\Gamma$-invariance of $c$ and $\beta(c)$ 
that $\operatorname{trans}_\beta(c)$ is $G$-invariant.

Since $\operatorname{trans}_\beta$ commutes with the coboundary operator, it induces a cohomology map
$$\operatorname{trans}:H^q(N,\partial N)\longrightarrow H^q_{c}(G,\mathbf{R}_\varepsilon).$$
As the transfer map $\operatorname{trans}_\beta$ restricts to a cochain map 
between the corresponding bounded cocomplexes, 
it also induces a map on the bounded cohomology groups
$$\operatorname{trans}_b:H^q_{b}(N,\partial N)\longrightarrow H^q_{c, b}(G,\mathbf{R}_\varepsilon),$$
and the commutativity of the middle diagram in (\ref{equ: CD with all trans}) is now obvious.

\subsubsection*{Commutativity of the Lower Square}
Denote by 
$\Phi: \Omega^q(\mathbb{H}^n,\mathbf{R}_\varepsilon) \longrightarrow L^\infty((\mathbb{H}^n)^{q+1},\mathbf{R}_\varepsilon)$ 
the map (analogous to the map $\Psi$ defined in \eqref{psi}) sending  the differential form $\alpha$ to the cochain $\Phi(\alpha)$ 
mapping a $(q+1)$-tuple of points $(x_0,...,x_q)\in (\mathbb{H}^n)^{q+1}$ to 
$$\int_{straight(x_0,...,x_q)}\alpha.$$ 
The de Rham isomorphism is realized at the cochain level by precomposing $\Phi$ 
with the map sending a singular simplex in $\mathbb{H}^n$ to its vertices. 
To check the commutativity of the lower square, observe that 
\begin{eqnarray*}
 \operatorname{trans}_\beta\circ \Phi(\alpha)(x_0,...,x_q)&=& 
\int_{\Gamma\backslash G}\varepsilon(\dot g^{-1})\cdot \left( \int_{straight(\beta(\dot gx_0),...,\beta(\dot gx_q))} \alpha \right) d\mu(\dot g)
\end{eqnarray*}
while
\begin{eqnarray*}
 \Phi\circ \operatorname{trans}_{dR}(\alpha)(x_0,...,x_q)&=&
 \int_{\Gamma\backslash G}\varepsilon(\dot g^{-1})\cdot \left( \int_{straight(\dot gx_0,...,\dot gx_q))} \alpha \right) d\mu(\dot g)\,.
\end{eqnarray*}
If $d\alpha=0$, the coboundary of the $G$-invariant cochain 
$$
(x_0,...,x_{q-1})\longmapsto 
\sum_{i=0}^{q-1} (-1)^i \int_{\Gamma\backslash G}
\varepsilon(\dot g^{-1})\cdot \left( \int_{straight(\dot gx_0,...,\dot gx_i,\beta(\dot gx_i),...,\beta(\dot gx_{q-1}))} \alpha \right) d\mu(\dot g)
$$
is equal to the difference of the two given cocycles.

\subsubsection*{Commutativity of the Upper Triangle}
Observe that the isomorphism $H^\bullet_b(M,M\smallsetminus N)\cong H^\bullet_b(\Gamma)$ 
can be induced at the cochain level 
by the map 
$\beta^*: C^q_b(\mathbb{H}^n,U)^\Gamma \rightarrow L^\infty((\mathbb{H}^n)^{q+1},\mathbf{R})^\Gamma$
defined in \eqref{beta-ast} (and for which we allow ourselves a slight abuse of notation).
It is immediate that we now have commutativity of the upper triangle already at the cochain level, 
\begin{equation*}
\xymatrix{ L^\infty((\mathbb{H}^n)^{q+1},\mathbf{R})^\Gamma\ar[rd]^{\operatorname{trans}_\Gamma}&\\
C^q_{b}(\mathbb{H}^n,U)^\Gamma  \ar[u]^{\beta^\ast}\ar[r]_{\operatorname{trans}_b}& L^\infty((\mathbb{H}^n)^{q+1},\mathbf{R}_\varepsilon)^G\,.}
\end{equation*}
This finishes the proof of the proposition. 
\end{proof}

\subsection{Properties of $\mathrm{Vol}(\rho)$}

\begin{lemma}\label{lemma: vol of i} Let $i:\Gamma\hookrightarrow G$
be a lattice embedding. Then 
$$
\mathrm{Vol}(i)=\mathrm{Vol}(M)\,.
$$
\end{lemma}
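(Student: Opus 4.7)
The plan is to reduce to the torsion-free case and then exploit the commutative diagram of Proposition~\ref{prop: comm}, combined with the observation (stated just after formula~(\ref{trans})) that $\operatorname{trans}_\Gamma$ restricts to the identity on classes represented by $G$-equivariant cocycles.

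Assume first that $\Gamma$ is torsion-free. Since $\omega_n^b\in H^n_{c,b}(G,\mathbf{R}_\varepsilon)$ is represented at the cochain level by the $G$-equivariant cocycle $\mathrm{Vol}_n$, we have $\operatorname{trans}_\Gamma(i^*(\omega_n^b))=\omega_n^b$. Using the canonical isomorphism $H^n_b(\Gamma)\cong H^n_b(N,\partial N)$ discussed before the diagram, let $\alpha\in H^n_b(N,\partial N)$ be the class corresponding to $i^*(\omega_n^b)$. The commutativity of diagram~(\ref{equ: CD with trans}) then yields
$$
\tau_{dR}\bigl(c(\alpha)\bigr)=c\bigl(\operatorname{trans}_\Gamma(i^*(\omega_n^b))\bigr)=c(\omega_n^b)=\omega_n.
$$

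Since $N$ is a connected, compact, orientable $n$-manifold with boundary, Poincar\'e--Lefschetz duality gives $H^n(N,\partial N,\mathbf{R})\cong\mathbf{R}$, generated by the class $\omega_{N,\partial N}$ characterized by $\langle\omega_{N,\partial N},[N,\partial N]\rangle=\mathrm{Vol}(M)$. By equation~(\ref{volume of M}), $\tau_{dR}(\omega_{N,\partial N})=\omega_n$. Because $\omega_n\neq 0$ and $\tau_{dR}$ is linear on the one-dimensional space $H^n(N,\partial N)$, the two displayed equalities force $c(\alpha)=\omega_{N,\partial N}$. Evaluation on the relative fundamental class then gives
$$
\mathrm{Vol}(i)=\langle c(\alpha),[N,\partial N]\rangle=\langle\omega_{N,\partial N},[N,\partial N]\rangle=\mathrm{Vol}(M).
$$

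If $\Gamma$ has torsion, choose a torsion-free finite-index subgroup $\Lambda<\Gamma$. Then $M_\Lambda:=i(\Lambda)\backslash\mathbb{H}^n$ is a $[\Gamma:\Lambda]$-fold cover of $M$ with $\mathrm{Vol}(M_\Lambda)=[\Gamma:\Lambda]\cdot\mathrm{Vol}(M)$, so applying the torsion-free case to $i|_\Lambda$ yields $\mathrm{Vol}(i|_\Lambda)=\mathrm{Vol}(M_\Lambda)$; the definition $\mathrm{Vol}(i):=\mathrm{Vol}(i|_\Lambda)/[\Gamma:\Lambda]$ completes the proof. No serious obstacle is anticipated: the heart of the argument is the diagram chase of Proposition~\ref{prop: comm}, and the only subtlety worth flagging is ensuring the identification of $i^*(\omega_n^b)$ as a relative bounded class is compatible with the vertical isomorphism appearing in that diagram, which follows from the construction of the isomorphism via the map $\beta^*$ used in the proof of Proposition~\ref{prop: comm}.
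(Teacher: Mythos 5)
Your proposal is correct and follows essentially the same route as the paper: reduce to the torsion-free case by multiplicativity, use $\operatorname{trans}_\Gamma\circ i^*=\mathrm{Id}$ together with the commutativity of diagram~(\ref{equ: CD with trans}) and the identity $\tau_{dR}(\omega_{N,\partial N})=\omega_n$ from \eqref{volume of M}, and conclude $(c\circ i^*)(\omega_n^b)=\omega_{N,\partial N}$ by injectivity of $\tau_{dR}$ in top degree before evaluating on $[N,\partial N]$. Your phrasing of the injectivity step via one-dimensionality of $H^n(N,\partial N)$ is just a spelled-out version of the paper's assertion that $\tau_{dR}$ is an isomorphism in top degree, so there is no substantive difference.
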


\begin{proof} Both sides are multiplicative with respect to finite index subgroups. 
We can hence without loss of generality suppose that $\Gamma$ is torsion free. By definition, we have 
\begin{eqnarray*}
\mathrm{Vol}(M)&=& \langle \omega_{N,\partial N} , [N,\partial N] \rangle,\\
\mathrm{Vol}(i)&=& \langle (c \circ i^*)(\omega_n^b), [N,\partial N] \rangle.
\end{eqnarray*}
The desired equality would thus clearly follow from 
$\omega_{N,\partial N}=(c \circ i^*)(\omega_n^b)$. 
As the transfer map $\tau_{dR}:H^n(N,\partial N)\rightarrow H^n_c(G)$ is an isomorphism in top degree and 
sends $\omega_{N,\partial N}$ to $\omega_n$, this is equivalent to
$$
\omega_n
= \tau_{dR}(\omega_{N,\partial N})
=\underbrace{\tau_{dR}\circ c  }_{c \circ \operatorname{trans}_\Gamma}\circ i^*(\omega_n^b)
=c \circ \operatorname{trans}_\Gamma  \circ i^* (\omega_n^b)
=c(\omega_n^b)=\omega_n\,,
$$
where we have used the commutativity of the diagram (\ref{equ: CD with trans}) 
(Proposition~\ref{prop: comm})  
and the fact that $\operatorname{trans}_\Gamma\circ i^*=Id$. \qed
\end{proof}

\begin{proposition}
\label{prop: trans rho}Let $\rho:\Gamma\rightarrow G$
be a representation. The composition 
\begin{equation*}
\xymatrix{ \mathbf{R}\cong H_{c,b}^{n}(G,\mathbf{R}_{\varepsilon}) \ar[r]^-{\rho^\ast}
&H_{b}^{n}(\Gamma) \ar[r]^-{\operatorname{trans}_\Gamma}
&H_{c,b}^{n}(G,\mathbf{R}_{\varepsilon})\cong\mathbf{R} }
\end{equation*}
is equal to $\lambda\cdot \mathrm{Id}$, where
$$|\lambda|=\frac{|\mathrm{Vol}(\rho)|}{\mathrm{Vol}(M)}\leq 1.$$
\end{proposition}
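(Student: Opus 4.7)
The plan is to observe that the composition is an endomorphism of the $1$-dimensional space $H_{c,b}^n(G,\mathbf{R}_\varepsilon)\cong\mathbf{R}$ and is therefore automatically scalar multiplication by some $\lambda\in\mathbf{R}$; one identifies $\lambda$ by tracking the canonical generator $\omega_n^b$ through the composition, and then bounds $|\lambda|$ using the norm non-increasing nature of pull-back and transfer.

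For the identification of $\lambda$, I would post-compose with the comparison map $c:H_{c,b}^n(G,\mathbf{R}_\varepsilon)\to H_c^n(G,\mathbf{R}_\varepsilon)$, which is an isomorphism by Proposition~\ref{prop: compmapisomdegn}; it therefore suffices to compute the scalar in
\begin{equation*}
c\circ\operatorname{trans}_\Gamma\circ\rho^\ast(\omega_n^b)=\lambda\,\omega_n.
\end{equation*}
Using the canonical isomorphism $H_b^n(\Gamma)\cong H_b^n(N,\partial N)$ coming from the amenability of the cusp subgroups, the commutative diagram~\eqref{equ: CD with trans} of Proposition~\ref{prop: comm} rewrites the left-hand side as $\tau_{dR}\bigl((c\circ\rho^\ast)(\omega_n^b)\bigr)$. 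By the very definition of the volume of a representation, the relative class $(c\circ\rho^\ast)(\omega_n^b)\in H^n(N,\partial N)\cong\mathbf{R}$ pairs with $[N,\partial N]$ to give $\mathrm{Vol}(\rho)$, while the generator $\omega_{N,\partial N}$ pairs to $\mathrm{Vol}(M)$; hence $(c\circ\rho^\ast)(\omega_n^b)=\frac{\mathrm{Vol}(\rho)}{\mathrm{Vol}(M)}\,\omega_{N,\partial N}$. Applying $\tau_{dR}$ and invoking~\eqref{volume of M} then yields $\lambda=\mathrm{Vol}(\rho)/\mathrm{Vol}(M)$.

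For the inequality $|\lambda|\leq 1$, I would invoke the fact that both $\rho^\ast$ and $\operatorname{trans}_\Gamma$ are norm non-increasing on continuous bounded cohomology: pull-back at the cochain level is obviously so, and averaging against the probability measure $\mu$ as in~\eqref{trans} is as well. Combining the two estimates on $\omega_n^b$ gives
\begin{equation*}
|\lambda|\cdot\|\omega_n^b\|=\|(\operatorname{trans}_\Gamma\circ\rho^\ast)(\omega_n^b)\|\leq\|\omega_n^b\|,
\end{equation*}
and since $\|\omega_n^b\|=v_n>0$, we conclude $|\lambda|\leq 1$.

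The main obstacle is essentially bookkeeping: one must ensure that the identifications $H_b^n(\Gamma)\cong H_b^n(N,\partial N)$ and $H_c^n(G,\mathbf{R}_\varepsilon)\cong\mathbf{R}$ implicit in the definition of $\mathrm{Vol}(\rho)$ are compatible with those used in Proposition~\ref{prop: comm} and in~\eqref{volume of M}, so that no spurious factor of $\mathrm{Vol}(M)$ appears. All substantive content has been done in the preceding two subsections; the present statement is essentially an immediate consequence of the commutativity of~\eqref{equ: CD with trans} together with the normalisation $\tau_{dR}(\omega_{N,\partial N})=\omega_n$.
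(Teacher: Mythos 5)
Your proposal is correct and follows essentially the paper's own argument: define $\lambda$ on the one-dimensional space $H^n_{c,b}(G,\mathbf{R}_\varepsilon)$, push through the comparison map and the commutative diagram~\eqref{equ: CD with trans} to identify $(c\circ\rho^\ast)(\omega_n^b)$ with $\frac{\mathrm{Vol}(\rho)}{\mathrm{Vol}(M)}\,\omega_{N,\partial N}$ using the normalisation $\tau_{dR}(\omega_{N,\partial N})=\omega_n$ (the paper phrases this via injectivity of $\tau_{dR}$ in top degree, you via the evaluation isomorphism $H^n(N,\partial N)\cong\mathbf{R}$ --- the same step), and bound $|\lambda|$ by norm non-increase of $\rho^\ast$ and $\operatorname{trans}_\Gamma$. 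The only detail you omit is the paper's one-line reduction to torsion-free $\Gamma$ (needed because $\mathrm{Vol}(\rho)$ and the pair $(N,\partial N)$ are defined through a torsion-free finite-index subgroup), justified by noting that the ratio $\mathrm{Vol}(\rho)/\mathrm{Vol}(M)$ is unchanged under passage to finite-index subgroups.
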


\begin{proof} As the quotient is left invariant by passing to finite index subgroups, 
we can without loss of generality suppose that $\Gamma$ is torsion free. 
Let $\lambda\in \mathbf{R}$ be defined by 
\begin{equation}\label{lambda}
\operatorname{trans}_\Gamma \circ \rho^*(\omega_n^b)=\lambda \cdot \omega_n^b\,.
\end{equation}
We apply the comparison map $c$ to this equality and obtain
$$ 
c \circ \operatorname{trans}_\Gamma \circ \rho^*(\omega_n^b)
=\lambda \cdot c (\omega_n^b)=\lambda \cdot \omega_n=\lambda \cdot \tau_{dR}(\omega_{N,\partial N})\,.
$$
The first expression of this line of equalities is equal to 
$\tau_{dR} \circ c \circ \rho^*(\omega_n^b)$ 
by the commutativity of the diagram (\ref{equ: CD with trans}). Since $\tau_{dR}$ 
is injective in top degree it follows that $(c \circ \rho^*)(\omega_n^b)=\lambda \cdot \omega_{N,\partial N}$. 
Evaluating on the fundamental class, we obtain
$$ 
\mathrm{Vol}(\rho)=\langle  (c \circ \rho^*)(\omega_n^b),[N,\partial N] \rangle 
=\lambda \cdot \langle  \omega_{N,\partial N},[N,\partial N] \rangle =
\lambda \cdot \mathrm{Vol}(i)=\lambda\cdot \mathrm{Vol}(M)\,.
$$
For the inequality, we  take the sup norms on both sides of \eqref{lambda}, and get
$$ 
|\lambda|=\frac{\|\operatorname{trans}_\Gamma \circ \rho^*(\omega_n^b)\|}{\|\omega_n^b\|}\leq 1\,,
$$
where the inequality follows from the fact that all maps involved do not increase the norm.
This finishes the proof of the proposition.\qed
\end{proof}

\section{On the Proof of Theorem~\ref{theorem:maxrep}}
\label{sec:4}

The simple inequality $|\mathrm{Vol}(\rho)|\leq |\mathrm{Vol}(i)|=\mathrm{Vol}(M)$ 
follows from Proposition~\ref{prop: trans rho} and Lemma~\ref{lemma: vol of i}. 

The proof is divided into three steps. The first step, which follows essentially Furstenberg's footsteps
\cite[Chapter 4]{Zimmer_book}, consists in exhibiting a $\rho$-equivariant measurable boundary map 
$\varphi:\partial \mathbb{H}^n\rightarrow \partial \mathbb{H}^n$. 
In the second step we will establish that $\varphi$ maps 
the vertices of almost every positively oriented ideal simplex 
to vertices of positively (or negatively - one or the other, not both) oriented ideal simplices. 
In the third and last step we show that $\varphi$ has to be the extension of an isometry, 
which will provide the conjugation between $\rho$ and $i$. 
The fact that $n\geq 3$ will only be used in the third step. 

\subsection*{Step 1: The Equivariant Boundary Map} We need to define a measurable map 
$\varphi:\partial \mathbb{H}^n\rightarrow \partial \mathbb{H}^n$ such that
\begin{equation}\label{equ:phiequivar}
\varphi(i(\gamma)\cdot \xi)=\rho(\gamma)\cdot \varphi(\xi),
\end{equation}
for every $\xi\in \partial \mathbb{H}^n$ and every $\gamma\in \Gamma$. 

The construction of such boundary map is the sore point of many rigidity questions.  
In the rank one situation in which we are,
the construction is well known and much easier, and is recalled here for completeness.   

Since $\partial\mathbb{H}^n$ can be identified with $\mathrm{Isom}^+(\mathbb{H}^n)/P$, 
where $P<\mathrm{Isom}^+(\mathbb{H}^n)$ is a minimal parabolic,
the action of $\Gamma$ on $\partial\mathbb{H}^n$ is amenable.
Thus there exists a $\Gamma$-equivariant measurable map
$\varphi:\partial \mathbb{H}^n\rightarrow \mathcal M^1(\partial \mathbb{H}^n)$, 
where $\mathcal M^1(\partial \mathbb{H}^n)$ denotes the probability measures on $\partial\mathbb{H}^n$,
\cite{Zimmer_book}.  We recall the proof here for the sake of the reader 
familiar with the notion of amenable group but not conversant
with that of amenable action, although the result is by now classical.  

\begin{lemma}  Let $G$ be a locally compact group, $\Gamma<G$ a lattice and $P$ an amenable subgroup.
Let $X$ be a compact metrizable space with a $\Gamma$-action by homeomorphisms.
Then there exists a $\Gamma$-equivariant boundary map $\varphi:G/P\to\mathcal M^1(X)$.
\end{lemma}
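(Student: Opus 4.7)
The plan is to construct a measurable, $\Gamma$-equivariant map $\psi:G\to\mathcal M^1(X)$ that is also invariant under the right action of $P$; such a $\psi$ automatically descends to the desired $\varphi:G/P\to\mathcal M^1(X)$. Since $X$ is compact metrizable, $C(X)$ is a separable Banach space and $\mathcal M^1(X)$ is a convex, weak-$*$ compact, metrizable subset of the unit ball of $C(X)^*$ by Banach--Alaoglu.

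First I would produce a measurable, $\Gamma$-equivariant $\psi_0:G\to\mathcal M^1(X)$ by fixing any $\mu_0\in\mathcal M^1(X)$ and any Borel fundamental domain $D\subset G$ for the left $\Gamma$-action, and setting $\psi_0(\gamma d):=\gamma_*\mu_0$ for $\gamma\in\Gamma$, $d\in D$. By construction $\psi_0$ is Borel and $\Gamma$-equivariant, but there is no reason for it to be right-$P$-invariant.

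The core step is then to average $\psi_0$ along right $P$-orbits, using the amenability of $P$. Choose a F\o lner net $(\mu_\alpha)$ of absolutely continuous probability measures on $P$ with $\|L_{p_0}\mu_\alpha-\mu_\alpha\|\to 0$ for every $p_0\in P$, and set
$$\psi_\alpha(g):=\int_P \psi_0(gp)\,d\mu_\alpha(p)\in\mathcal M^1(X).$$
Each $\psi_\alpha$ is Borel in $g$ and remains $\Gamma$-equivariant by a change of variables in the integral. Viewing $(\psi_\alpha)$ as a bounded net in $L^\infty_{w^*}(G,C(X)^*)=L^1(G,C(X))^*$, I would extract a weak-$*$ cluster point $\psi$. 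The asymptotic left-invariance of $(\mu_\alpha)$ forces the cluster point to satisfy $\psi(gp_0)=\psi(g)$ for every $p_0\in P$, while $\Gamma$-equivariance and the fact that $\psi(g)\in\mathcal M^1(X)$ are both preserved in the weak-$*$ limit.

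The main obstacle is the measurability issue. Carried out naively with an invariant mean $m\in L^\infty(P)^*$ via $\langle\psi(g),f\rangle:=m_p(\langle\psi_0(gp),f\rangle)$, the averaging does not immediately yield a Borel map $G\to\mathcal M^1(X)$, because $m$ is only finitely additive. Passing instead through the F\o lner net and exploiting the weak-$*$ compactness of the dual ball, together with separability of $C(X)$ (which makes the weak-$*$ topology metrizable on bounded sets, so that cluster points can be realized as pointwise limits along subsequences after suitable countable testing), circumvents this obstruction and delivers the required $\Gamma$-equivariant boundary map $\varphi:G/P\to\mathcal M^1(X)$.
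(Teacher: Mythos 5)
Your argument is correct and is essentially an unwinding, by hand, of the paper's proof. The paper observes that $\mathrm L_\Gamma^\infty(G,\mathcal M^1(X))$ is a convex, weak-$*$ compact, right-$P$-invariant subset of the dual of the separable Banach space $\mathrm L_\Gamma^1(G,C(X))$ and invokes the fixed-point characterization of amenability of $P$ to produce a $P$-fixed element, which is the desired $\varphi$; you instead re-derive that fixed point explicitly, starting from a concrete $\Gamma$-equivariant $\psi_0$ built from a fundamental domain, averaging against a F\o lner--Reiter net in $L^1(P)$, and extracting a weak-$*$ cluster point in $L^1(G,C(X))^*\cong L^\infty_{w^*}(G,C(X)^*)$. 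Your route is more elementary (no appeal to the fixed-point theorem), and your construction of $\psi_0$ supplies something the paper leaves implicit, namely that the compact convex set $\mathrm L_\Gamma^\infty(G,\mathcal M^1(X))$ is nonempty; the paper's route, by working from the start in the $\Gamma$-equivariant dual, gets equivariance and $\mathcal M^1$-valuedness for free from the structure of the convex set. Two cautions on your write-up. First, the parenthetical claim that weak-$*$ cluster points can be ``realized as pointwise limits along subsequences'' is false (weak-$*$ convergence in $L^\infty_{w^*}$ gives no pointwise a.e.\ convergence), but it is also unnecessary: the correct observation is that $\Gamma$-equivariance, right-$P$-invariance and being probability-measure valued are weak-$*$ closed conditions, so they pass to the cluster point; for the $P$-invariance one uses $\|\psi_\alpha(\cdot\, p_0)-\psi_\alpha(\cdot)\|\le\|L_{p_0}\mu_\alpha-\mu_\alpha\|\to 0$ uniformly together with the fact that right translation by $p_0$ is weak-$*$ continuous, being the adjoint of a bounded operator on $L^1(G,C(X))$. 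Second, like the paper, you should record the routine Fubini-type step showing that a function class on $G$ with $\psi(gp)=\psi(g)$ a.e.\ for every $p\in P$ descends to an (almost everywhere defined) measurable $\Gamma$-equivariant map on $G/P$; both proofs produce $\varphi$ only in this a.e.\ sense, which is all that is used later.
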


\begin{proof} Let $C(X)$ be the space of continuous functions on $X$.  The space
$$
\begin{aligned}
\mathrm L_\Gamma^1(G,C(X)):=\{f:G\to C(X)\mid \,
f\text{ is measurable, }\Gamma\text{-equivariant and}\\
\int_{\Gamma\backslash G}\|f(\dot g)\|_\infty d\mu(\dot g)<\infty\}\,,
\end{aligned}
$$
is a separable Banach space whose dual is the space 
$\mathrm L_\Gamma^\infty(G,\mathcal M(X))$ of measurable
$\Gamma$-equivariant essentially bounded maps from $G$ into $\mathcal M(X)$, 
where $\mathcal M(X)=C(X)^\ast$ is the dual of $C(X)$.
(Notice that since $C(X)$ is a separable Banach space, 
the concept of measurability of a function $G\to C(X)^*$
is the same as to whether $C(X)^*$ is endowed with the weak-* or the norm topology.) 
Then $\mathrm L_\Gamma^\infty(G,\mathcal M^1(X))$ is a convex compact subset of the unit ball of 
$\mathrm L_\Gamma^\infty(G,\mathcal M(X))$ that is right $P$-invariant.  Since $P$ is amenable,
there exists a $P$-fixed point, that is nothing but the map $\varphi:G/P\to\mathcal M^1(X)$
we were looking for.
\qed
\end{proof}

We are going to associate to every $\mu\in\mathcal M^1(\partial\mathbb{H}^n)$ 
(in the image of $\varphi$) a point in $\partial\mathbb{H}^n$.

If the measure $\mu$ has only one atom of mass $\geq\frac12$, then we associate to $\mu$ this atom.  
We will see that all other possibilities result in a contradiction.

If the measure $\mu$ has no atoms of mass greater than or equal to  $\frac12$, 
we can apply Douady and Earle's barycenter construction \cite[\S~2]{Douady_Earle} 
that to such a measure associates equivariantly a point $b_\mu\in\mathbb{H}^n$.  
By ergodicity of the $\Gamma$-action on $\partial\mathbb{H}^n\times\partial\mathbb{H}^n$,
the distance $d:=d(b_{\varphi(x)},b_{\varphi(x')})$ between any two of these points is essentially constant.  
It follows that for a generic $x\in\partial\mathbb{H}^n$,
there is a bounded orbit, contradicting the non-elementarity of the action.

If on the other hand there is more than one atom whose mass is at least $\frac12$, 
then the support of the measure must consist
of two points (with an equally distributed measure).  
Denote by $g_x$ the geodesic between the two points in the support of the measure 
$\varphi(x)\in\mathcal M^1(\partial\mathbb{H}^n)$.
By ergodicity of the $\Gamma$-action on $\partial\mathbb{H}^n\times\partial\mathbb{H}^{n}$, 
the cardinality of the intersection 
$\operatorname{supp}(\varphi(x))\cap\operatorname{supp}(\varphi(x'))$ 
must be almost everywhere constant and 
hence almost everywhere either equal to $0$, $1$ or $2$.  

If $|\operatorname{supp}(\varphi(x))\cap\operatorname{supp}(\varphi(x'))|=2$ 
for almost all $x,x'\in\partial\mathbb{H}^n$, 
then the geodesic $g_x$ is  $\Gamma$-invariant and hence the action is elementary.  

If $|\operatorname{supp}(\varphi(x))\cap\operatorname{supp}(\varphi(x'))|=1$, 
then we have to distinguish two cases:  
either for almost every $x\in\partial\mathbb{H}^n$ there is a point $\xi\in\partial\mathbb{H}^n$ 
such that $\operatorname{supp}(\varphi(x))\cap\operatorname{supp}(\varphi(x'))=\{\xi\}$ 
for almost all $x'\in \partial\mathbb{H}^n$, 
in which case again $\xi$ would be $\Gamma$-invariant and the action elementary, or 
$\operatorname{supp}(\varphi(x))
\cup\operatorname{supp}(\varphi(x'))\cup\operatorname{supp}(\varphi(x''))$ 
consists of exactly three points for almost
every $x',x''\in\partial\mathbb{H}^n$.  
In this case the barycenter of the geodesic triangle with vertices 
in these three points is $\Gamma$-invariant and the
action is, again, elementary.

Finally, if $|\operatorname{supp}(\varphi(x))\cap\operatorname{supp}(\varphi(x'))|=0$, 
let $D:=d(g_x,g_{x'})$.  By ergodicity on $\partial\mathbb{H}^n\times\partial\mathbb{H}^n$,
$d$ is essentially constant.  Let $\gamma\in\rho(\Gamma)$ 
be a hyperbolic elements whose fixed points are not the endpoints of $g_x$ or $g_{x'}$.
Then iterates of $\gamma$ send a geodesic $g_{x'}$ 
into an arbitrarily small neighborhood of the attractive fixed point of $\gamma$, 
contradicting that $g_x$ is at fixed distance from $g_{x'}$.

\subsection*{Step 2: Mapping Regular Simplices to Regular Simplices}

The next step is to prove Theorem~\ref{thm:formula}.  Then if $\mathrm{Vol}(\rho)=\mathrm{Vol}(M)$,
it will follow that the map $\varphi$ in Step 1 sends almost 
all regular simplices to regular simplices. 

From Proposition~\ref{prop: trans rho} we obtain that the composition of  the induced map $\rho^*$ 
and the transfer with respect to the lattice embedding $i$ is equal to 
$\pm$ the identity on $H^n_{c,b}(G^+,\mathbf{R}_\varepsilon)$. 
In dimension $3$, it follows from \cite{Bloch} that  
$H^3_{c,b}(\mathrm{Isom}^+(\mathbb{H}^3),\mathbf{R})\cong \mathbf{R}$ and 
the proof can be formulated using trivial coefficients; this has been done in \cite{BurgerIozzi06}, 
which is the starting point of this paper.  
In higher dimension it is conjectured, but not known, that $H^n_{c,b}(G^+,\mathbf{R})\cong \mathbf{R}$.

We can without loss of generality suppose that $\operatorname{trans}_\Gamma \circ \rho^*$ is equal to $+Id$. 
Indeed, otherwise, we conjugate $\rho$ by an orientation reversing isometry. 
We will now show that the isomorphism realized at the cochain level, leads to the equality 
(\ref{eq:formula1}), 
which is only an almost everywhere equality. Up to this point, the proof is elementary. 
The only difficulty in our proof is to show that the almost everywhere equality is a true equality, 
which we prove in Proposition~\ref{prop:ae->e}. 
Note however that there are two cases in which Proposition~\ref{prop:ae->e} is immediate, namely 
1) if $\varphi$  is a homeomorphism, 
which is the case if $\Gamma$ is cocompact and $\rho$ is also a lattice embedding
(which is the case of the classical Mostow Rigidity Theorem), and 2) if the dimension $n$ equals $3$. 
We give the alternative simple arguments below. 

The bounded cohomology groups $H^n_{c,b}(G,\mathbf{R}_\varepsilon)$ and 
$H^n_b(\Gamma,\mathbf{R})$ can both be computed from the corresponding $L^\infty$ 
equivariant cochains on $\partial \mathbb{H}^n$. The induced  map 
$\rho^*:H^n_{c,b}(G,\mathbf{R}_\varepsilon)\rightarrow H^n_b(\Gamma,\mathbf{R})$
is represented by the pullback by $\varphi$, 
although it should be noted that the pullback in bounded cohomology cannot be implemented 
with respect to boundary maps in general,
unless the class to pull back can be represented by a strict invariant Borel cocycle
\cite{Burger_Iozzi_app}.  This is our case for $\mathrm{Vol}_n$ and  
as a consequence, $\varphi^*(\mathrm{Vol}_n)$ is also a measurable $\Gamma$-invariant cocycle and 
that determines a cohomology class in $H^n_b(\Gamma)$ (see \cite{BurgerIozzi06}). 
It remains to see that this class is indeed $\rho^*(\omega_n)$. 
In the cocomplex $C(G^{n+1},\mathbf{R}_\varepsilon)^G$, 
the volume class $\omega_n$ is represented by evaluating $\mathrm{Vol}_n$ 
on any point $\xi\in \partial \mathbb{H}^n$,
 thus by a cocycle
$$(g_0,\dots,g_n)\mapsto \mathrm{Vol}_n(g_0\xi,\dots,g_n\xi).$$
In the cocomplex $C(\Gamma^{n+1},\mathbf{R})$, the pull back class  $\rho^*(\omega_n)$ is represented by 
$$(\gamma_0,\dots,\gamma_n)\mapsto \mathrm{Vol}_n(\rho(\gamma_0)\varphi(\xi),\dots,\rho(\gamma_n)\varphi(\xi)).$$
The latter expression is equal to 
$$
\mathrm{Vol}_n(\varphi(\gamma_0 \xi),\dots,\varphi(\gamma_n\xi))=\varphi^*(\mathrm{Vol}_n)(\gamma_0 \xi,\dots,\gamma_n \xi)
$$
for every $\xi \in \partial \mathbb{H}^n$. Thus, evaluation on $\xi$ provides a map 
$L^\infty(\partial \mathbb{H}^n,\mathbf{R})\rightarrow C(\Gamma^{n+1},\mathbf{R})^\Gamma$ 
mapping our a priori unknown cocycle $\varphi^*(\mathrm{Vol}_n)$ to a representative of $\rho^*(\omega_n)$. 
It follows that $\varphi^*(\mathrm{Vol}_n)$ indeed represents $\rho^*(\omega_n)$. 

The composition of maps $\operatorname{trans}_\Gamma \circ \rho^*$ is thus realized at the cochain level by 
$$ \begin{array}{rcl}
L^\infty((\partial \mathbb{H}^n)^{n+1},\mathbf{R}_\varepsilon)^\Gamma &\longrightarrow 
& L^\infty((\partial \mathbb{H}^n)^{n+1},\mathbf{R}_\varepsilon)^G \\
v \qquad\qquad
&\longmapsto 
& \{ (\xi_0,\dots,\xi_n)\mapsto \int_{\Gamma \backslash G} \varepsilon(\dot g^{-1}) v( \varphi(\dot g\xi_0,\dots,\dot g\xi_n))d\mu(\dot g) \}\,.
\end{array}$$
Since the composition  $\operatorname{trans}_\Gamma \circ \rho^*$ is the multiplication by $\frac{\mathrm{Vol}(\rho)}{\mathrm{Vol}(M)}$ 
at the cohomology level and 
there are no coboundaries in degree $n$ (Lemma~\ref{lemma: no coboundary}), 
the above map sends the cocycle  $\mathrm{Vol}_n$ to 
$\frac{\mathrm{Vol}(\rho)}{\mathrm{Vol}(M)}\mathrm{Vol}_n$. 
Thus, for almost every $\xi_0,\dots,\xi_n\in \partial \mathbb{H}^n$ we have 
\begin{equation} \label{eq:formula1} 
\int_{\Gamma\backslash G} \varepsilon(\dot g^{-1})\cdot \mathrm{Vol}_n(\varphi(\dot g\xi_0),\dots,\varphi(\dot g\xi_n))d\mu(\dot g)
=\frac{\mathrm{Vol}(\rho)}{\mathrm{Vol}(M)}\mathrm{Vol}_n(\xi_0,\dots,\xi_n)\,.
\end{equation}
Let $(\partial\mathbb{H}^n)^{(n+1)}$ be the $G$-invariant open subset
of $(\partial\mathbb{H}^n)^{n+1}$ consisting of $(n+1)$-tuples of points 
$(\xi_0,\dots,\xi_{n})$ such that $\xi_i\neq \xi_j$ for all $i\neq j$. 
Observe that the volume cocycle $\mathrm{Vol}_n$ is continuous 
when restricted to $(\partial\mathbb{H}^n)^{(n+1)}$ and vanishes on 
$(\partial\mathbb{H}^n)^{n+1}\smallsetminus(\partial\mathbb{H}^n)^{(n+1)}$.
Observe moreover that the volume of ideal simplices is a continuous extension
of the volume of simplices with vertices in the interior $B^n$ of the sphere $S^{n-1}=\partial\mathbb{H}^n$.

\begin{proposition}\label{prop:ae->e}  Let $i:\Gamma\to G$ be a lattice embedding, 
$\rho:\Gamma\to G$ a representation
and $\varphi:\partial\mathbb{H}^n\to\partial\mathbb{H}^n$ a $\Gamma$-equivariant measurable map.  
Identifying $\Gamma$ with its image $i(\Gamma)<G$ via the lattice embedding, if
\begin{equation}\label{eq:formula} 
\int_{\Gamma\backslash G}\varepsilon(\dot g^{-1})\cdot \mathrm{Vol}_n(\varphi(\dot g\xi_0),\dots,\varphi(\dot g\xi_n))\,d\mu(\dot g)
=\frac{\mathrm{Vol}(\rho)}{\mathrm{Vol}(M)}\mathrm{Vol}_n(\xi_0,\dots,\xi_n)
\end{equation}
for almost every $(\xi_0,\dots,\xi_n)\in(\partial\mathbb{H}^n)^{n+1}$, then the equality holds everywhere.
\end{proposition}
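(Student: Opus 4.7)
The plan is to dispose of the degenerate $(n+1)$-tuples trivially and then reduce the remaining claim on the open set $(\partial\mathbb{H}^n)^{(n+1)}$ to a continuity statement for the left-hand side. First, if $\xi = (\xi_0,\dots,\xi_n) \in (\partial\mathbb{H}^n)^{n+1} \setminus (\partial\mathbb{H}^n)^{(n+1)}$, some $\xi_i = \xi_j$ with $i \neq j$; then $\varphi(\dot g \xi_i) = \varphi(\dot g \xi_j)$ wherever defined, the tuple $(\varphi(\dot g \xi_0),\dots,\varphi(\dot g \xi_n))$ is degenerate, and $\mathrm{Vol}_n$ vanishes on it. Both sides of \eqref{eq:formula} are then zero and the equality is trivial.

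On the open set $(\partial\mathbb{H}^n)^{(n+1)}$ the right-hand side is continuous, so it suffices to prove that the left-hand side, call it $I(\xi)$, is continuous there: two continuous functions on an open set that agree on a dense subset must coincide, and the full-measure set of a.e.\ equality is dense in $(\partial\mathbb{H}^n)^{(n+1)}$ because the visual measure class has full support. Given $\xi^{(k)} \to \xi^*$ in $(\partial\mathbb{H}^n)^{(n+1)}$, set $F_k(\dot g) := \varepsilon(\dot g^{-1}) \mathrm{Vol}_n(\varphi(\dot g \xi_0^{(k)}),\dots,\varphi(\dot g \xi_n^{(k)}))$ and define $F$ analogously from $\xi^*$. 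Both are uniformly bounded by $v_n$, so the dominated convergence theorem reduces the claim $I(\xi^{(k)}) \to I(\xi^*)$ to convergence $F_k \to F$ in $\mu$-measure on $\Gamma\backslash G$.

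The key tool, as the authors indicate, is Lusin's theorem. For each $\eta > 0$ pick a compact $K_\eta \subset \partial\mathbb{H}^n$ whose visual-measure complement has mass less than $\eta$ and on which $\varphi$ is uniformly continuous. The pushforward of $\mu$ under each orbit map $\dot g \mapsto \dot g \zeta$ is absolutely continuous with respect to the visual measure class, with Radon--Nikodym derivative bounded uniformly for $\zeta$ in a neighborhood of $\xi_i^*$; consequently the set $A_\eta^{(k)} := \{\dot g \in \Gamma\backslash G : \dot g \xi_i^*,\ \dot g \xi_i^{(k)} \in K_\eta \text{ for all } i\}$ has $\mu$-complement at most $C\eta$, uniformly in $k$ large. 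On any compact $\Omega \subset \Gamma\backslash G$ the orbit maps are uniformly continuous in their second variable, so $\dot g \xi_i^{(k)} \to \dot g \xi_i^*$ uniformly in $\dot g \in \Omega$; combined with uniform continuity of $\varphi|_{K_\eta}$, this gives $\varphi(\dot g \xi_i^{(k)}) \to \varphi(\dot g \xi_i^*)$ uniformly on $\Omega \cap A_\eta^{(k)}$. Continuity of $\mathrm{Vol}_n$ on $(\partial\mathbb{H}^n)^{(n+1)}$ then yields $F_k(\dot g) \to F(\dot g)$ at almost every such $\dot g$ on which the limit tuple is non-degenerate; the bounded remainder contributes at most $v_n(C\eta + \mu(\Omega^c))$, which is arbitrarily small.

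The main obstacle is the interplay of three sources of non-uniformity: Lusin gives continuity of $\varphi$ only on a large but proper subset; $\mathrm{Vol}_n$ may be discontinuous at tuples with coinciding vertices, so naive passage to the limit fails there; and in the non-cocompact case $\Gamma\backslash G$ must be exhausted by compact subsets to recover uniform continuity of the orbit maps. The argument bounds each error source by a small multiple of $\eta$ or $\mu(\Omega^c)$ and then sends both to zero.
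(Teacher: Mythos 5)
Your reduction (dispose of degenerate tuples, then prove the left-hand side $\mathcal I(\varphi,\mathcal D,\cdot)$ is continuous on $(\partial\mathbb{H}^n)^{(n+1)}$ and conclude by density of the full-measure set) is a reasonable repackaging of the paper's scheme, and your use of Lusin's theorem together with a uniform-in-$\zeta$ bound on $\mu\{\dot g:\dot g\zeta\in\partial\mathbb{H}^n\smallsetminus K_\eta\}$ parallels the paper's Lemma~\ref{lemma:measurekepbdelta} (note that this uniformity is not free: the paper proves it via a relatively compact Borel section of $G\to G/P$ and the decomposition of Haar measure, while you assert the bounded Radon--Nikodym statement without argument). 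The genuine gap is in the limiting step: you invoke continuity of $\mathrm{Vol}_n$ only on $(\partial\mathbb{H}^n)^{(n+1)}$, hence only at those $\dot g$ for which the limit tuple $(\varphi(\dot g\xi_0^*),\dots,\varphi(\dot g\xi_n^*))$ has pairwise distinct entries, and you silently discard the complementary set of $\dot g$, where $\varphi(\dot g\xi_i^*)=\varphi(\dot g\xi_j^*)$ for some $i\neq j$, without showing it is $\mu$-null. Distinctness of the $\xi_i^*$ does not prevent their $\varphi$-images from coinciding (e.g.\ a $\varphi$ collapsing a positive-measure set to a point), so this set can a priori have positive measure, and on it $F_k$ need not converge to $F=0$, since $\mathrm{Vol}_n$ is not continuous at degenerate tuples; your error budget $v_n(C\eta+\mu(\Omega^c))$ does not account for it. This is precisely the point the paper isolates as the only real difficulty: one must first prove that $\varphi$ is essentially injective, which the authors do by double ergodicity of $\Gamma$ on $\partial\mathbb{H}^n\times\partial\mathbb{H}^n$ combined with non-elementarity of $\rho(\Gamma)$ --- a hypothesis from the surrounding context (Theorem~\ref{thm:formula}) that your argument never invokes. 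Without this ingredient the claimed continuity of $\mathcal I$ is unproven.

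A secondary, fixable issue: even granting essential injectivity, the modulus of continuity of $\mathrm{Vol}_n$ at the limit image tuple is not uniform in $\dot g$ (the image tuples may come arbitrarily close to the degenerate locus), and your good sets $A_\eta^{(k)}$ move with $k$; so ``$F_k(\dot g)\to F(\dot g)$ at a.e.\ good $\dot g$'' does not by itself bound $\int_{\Omega\cap A_\eta^{(k)}}|F_k-F|\,d\mu$ uniformly for large $k$. You need a further truncation, e.g.\ restricting to the $\dot g$ whose limit image tuple is $c$-separated (a set exhausting the good set as $c\to0$ once the degenerate set is null), or an argument along the lines of the paper's set $\mathcal E(\xi_0,\dots,\xi_n)$ and the dominated-convergence step along a sequence of tuples from the full-measure set $\mathcal S$ in the proof of Lemma~\ref{lemma:continuity}. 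With these two repairs --- essential injectivity of $\varphi$ and the extra uniformization --- your route would go through, and would amount to a reorganization of the paper's estimates $M_\epsilon(\delta)$ and $L(\epsilon,\delta)$ rather than a genuinely different proof.
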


Before we proceed with the proof, 
let us observe that it immediately follows from the proposition that if
$\rho$ has maximal volume, then 
$\varphi$ maps the vertices of almost every regular simplex to the vertices 
of a regular simplex of the same orientation, which is the conclusion of Step 2. 

\begin{proof}[for $\varphi$ homeomorphism]
Since $\varphi$ is injective, both sides of the almost everywhere equality 
are continuous on $(\partial\mathbb{H}^n)^{(n+1)}$. 
Since they agree on a full measure subset of $(\partial\mathbb{H}^n)^{(n+1)}$, 
the equality holds on the whole of $(\partial\mathbb{H}^n)^{(n+1)}$. 
As for its complement, it is clear that  if $\xi_i=\xi_j$ for $i\neq j$ then both sides of the equality vanish.
\qed
\end{proof}

\begin{proof}[for $n=3$] Both sides of the almost equality 
are defined on the whole of $(\partial\mathbb{H}^3)^{4}$, 
are cocycles on the whole of $(\partial\mathbb{H}^3)^{4}$, 
vanish on $(\partial \mathbb{H}^3)^4 \smallsetminus (\partial \mathbb{H}^3)^{(4)}$ 
and are $\mathrm{Isom}^+(\mathbb{H}^3)$-invariant. 
Let $a,b:(\partial\mathbb{H}^3)^{4}\rightarrow \mathbf{R}$ be two such functions and 
suppose that $a=b$ on a set of full measure. This means that for 
{\it almost every} $(\xi_0,...,\xi_3)\in (\partial \mathbb{H}^3)^4$, 
we have $a(\xi_0,...,\xi_3)=b(\xi_0,...,\xi_3)$. 
Since $\mathrm{Isom}^+(\mathbb{H}^3)$ acts transitively on $3$-tuples of distinct points 
in $\mathbb{H}^3$ and both $a$ and $b$ are $\mathrm{Isom}^+(\mathbb{H}^3)$-invariant, 
this means that for {\it every} $(\xi_0,\xi_1,\xi_2)\in (\partial \mathbb{H}^3)^{(3)}$ 
and almost every $\eta \in \partial \mathbb{H}^3$ the equality 
$$
a(\xi_0,\xi_1,\xi_2,\eta)=b(\xi_0,\xi_1,\xi_2,\eta)
$$
holds. Let $\xi_0,...,\xi_3\in \partial\mathbb{H}^3$ be arbitary. 
If $\xi_i=\xi_j$ for $i\neq j$, we have $a(\xi_0,...,\xi_3)=b(\xi_0,...,\xi_3))$ by assumption. 
Suppose $\xi_i\neq \xi_j$ whenever $i\neq j$. By the above, for every $i\in {0,...,3}$ the equality 
$$a(\xi_0,...,\widehat{\xi_i},...,\xi_3,\eta)=b(\xi_0,...,\widehat{\xi_i},...,\xi_3,\eta)$$
holds for $\eta$ in a subset of full measure in $\partial \mathbb{H}^3$.
Let $\eta$ be in the (non empty) intersection 
of these four full measure subsets of $\partial \mathbb{H}^3$. We then have
\begin{eqnarray*}
a(\xi_0,...,\xi_3)&=&\sum_{i=0}^3(-1)^i a(\xi_0,...,\widehat{\xi_i},...,\xi_3,\eta)\\
&=&\sum_{i=0}^3(-1)^i b(\xi_0,...,\widehat{\xi_i},...,\xi_3,\eta)=b(\xi_0,...,\xi_3),
\end{eqnarray*}
where we have used the cocycle relations for $a$ and $b$ in the first and last equality respectively. 
\qed
\end{proof}

\begin{proof}[general case] Observe first of all that for all 
$(\xi_0,\dots,\xi_{n})\in(\partial\mathbb{H}^n)^{n+1}\smallsetminus(\partial\mathbb{H}^n)^{(n+1)}$
the equality holds trivially.

Using the fact that $\partial\mathbb{H}^n\cong S^{n-1}\subset \mathbf{R}^n$, 
let us consider the function $\varphi:\partial\mathbb{H}^n\to\partial\mathbb{H}^n$
as a function $\varphi:\partial\mathbb{H}^n\to\mathbf{R}^n$ and denote by $\varphi_j$, 
for $j=1,\dots,n$ its coordinates.
Since $\partial\mathbb{H}^n\cong G/P$, where $P$ is a minimal parabolic, 
let $\nu$ be the quasi-invariant measure on 
$\partial\mathbb{H}^n$ obtained from the decomposition of the Haar measure $\mu_G$ 
with respect to the Haar measure 
$\mu_P$ on $P$, as in \eqref{eq:reiter}.
According to Lusin's theorem applied to the $\varphi_j$ for $j=1,\dots,n$ 
(see for example \cite[Theorem~2.24]{Rudin}),
for every $\delta>0$ there exist a measurable set $B_{\delta,i}\subset \partial\mathbb{H}^n$ 
with measure $\nu(B_{i,\delta})\leq\delta$
and a continuous function 
$f'_{j,\delta}:\partial\mathbb{H}^n\to\mathbf{R}$ such that $\varphi_j\equiv f'_{j,\delta}$ 
on $\partial\mathbb{H}^n\smallsetminus B_{j,\delta}$.
Set $ f'_\delta: =( f_{1,\delta},\dots, f_{n,\delta})\to\mathbf{R}^n$ and consider the composition 
$ f_\delta:=r\circ f'_\delta$ with the retraction $r:\mathbf{R}^n\to\overline{B^n}$
to the closed unit ball $\overline{B^n}\subset\mathbf{R}^n$.  
Then, by setting $B_\delta:=\cup_{j=1}^{n}B_{j,\delta}$, $\varphi$ 
coincides on $\partial\mathbb{H}^n\smallsetminus B_\delta$  
with the continuous function $ f_\delta:\partial\mathbb{H}^n\to \overline{B^n}$ 
and $\nu(B_\delta)\leq n\delta$. 

Let $\mathcal D\subset G$ be a fundamental domain for the action of $\Gamma$ on $G$.
For every measurable subset $E\subset\mathcal D$, 
any measurable map $\psi:\partial\mathbb{H}^n\to\overline{B^n}$ and any 
point $(\xi_0,\dots,\xi_n)\in(\partial\mathbb{H}^n)^{(n+1)}$, we use the notation
\begin{equation*}
\mathcal I(\psi,E,(\xi_0,\dots,\xi_n)):=\int_E\varepsilon(g^{-1})\mathrm{Vol}_n(\psi(g\xi_0),\dots,\psi(g\xi_n))\,d\mu_G(g)\,,
\end{equation*}
so that we need to show that if 
\begin{equation}\label{eq:formulaD} 
\mathcal I(\varphi,\mathcal D,(\xi_0,\dots,\xi_n))=\frac{\mathrm{Vol}(\rho)}{\mathrm{Vol}(M)}\mathrm{Vol}_n(\xi_0,\dots,\xi_n)
\end{equation}
for almost every $(\xi_0,\dots,\xi_n)\in(\partial\mathbb{H}^n)^{(n+1)}$, then the equality holds everywhere.

Fix $\epsilon >0$ and let $K_\epsilon\subset\mathcal D$ be a compact set 
such that $\mu_G(\mathcal D\smallsetminus K_\epsilon)<\epsilon$.
The proof is broken up in several lemmas, that we state and use here, but whose proof we postpone.

\begin{lemma}\label{lemma:measurekepbdelta} With the above notations,
\begin{equation}\label{eq:measurekepbdelta}
\mu_G(\{g\in K_\epsilon:\, g\xi\in B_\delta\})\leq \sigma_\epsilon(\delta)\,,
\end{equation}
where $\sigma_\epsilon(\delta)$ does not depend on $\xi\in\partial\mathbb{H}^n$ and
$\sigma_\epsilon(\delta)\to0$ when $\delta\to0$.
\end{lemma}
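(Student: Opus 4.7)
The plan is to use the identification $\partial\mathbb{H}^n \cong G/P$ (with $P$ a minimal parabolic) together with the Reiter-type disintegration \eqref{eq:reiter} of $\mu_G$ along $\pi\colon G\to G/P$ in order to transfer the bound $\nu(B_\delta)\leq n\delta$ into the desired uniform $\mu_G$-estimate. The uniformity in $\xi$ will come from the transitive action of the maximal compact subgroup $K\subset G$ on $\partial\mathbb{H}^n$.

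First, I would remove the $\xi$-dependence. Fix a basepoint $\xi_0 = eP$ and, for each $\xi$, choose $k_\xi\in K$ with $k_\xi \xi_0 = \xi$; then the set in question is $K_\epsilon \cap \pi^{-1}(B_\delta)\,k_\xi^{-1}$. Since $G = \mathrm{Isom}(\mathbb{H}^n)$ is unimodular, its Haar measure is bi-invariant, and right translation by $k_\xi$ gives
$$
\mu_G\bigl(K_\epsilon \cap \pi^{-1}(B_\delta)\,k_\xi^{-1}\bigr)
= \mu_G\bigl(K_\epsilon k_\xi \cap \pi^{-1}(B_\delta)\bigr)
\leq \mu_G\bigl(L \cap \pi^{-1}(B_\delta)\bigr),
$$
where $L:=K_\epsilon K$ is a fixed compact subset of $G$ depending only on $\epsilon$.

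Second, I would apply \eqref{eq:reiter} to $1_L\cdot 1_{\pi^{-1}(B_\delta)}$, writing
$$
\mu_G\bigl(L\cap \pi^{-1}(B_\delta)\bigr) = \int_{B_\delta} h_L(\eta)\,d\nu(\eta),
$$
where $h_L(\pi(g))$ is obtained by integrating $1_L(gp)$ against $\mu_P$ (modulated, if necessary, by the modular character coming from the non-unimodularity of $P$). Because $L$ is compact, $h_L$ is supported on the compact set $\pi(L)\subset\partial\mathbb{H}^n$ and bounded there by some constant $M_\epsilon$ depending only on $\epsilon$; hence $\mu_G(L\cap\pi^{-1}(B_\delta))\leq M_\epsilon\,\nu(B_\delta)\leq nM_\epsilon\delta$, and setting $\sigma_\epsilon(\delta):=nM_\epsilon\delta$ completes the proof. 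There is no deep obstacle: the only mild subtlety is the modular factor in \eqref{eq:reiter} (since $G/P$ carries no $G$-invariant measure, only the quasi-invariant $\nu$), but this is harmless because the relevant integration is confined to the compact set $L$, where the factor remains bounded. The decisive step is the symmetrization by $K$, which replaces the family of $\xi$-dependent sets $\pi^{-1}(B_\delta)k_\xi^{-1}$ by the single compact piece $L\cap \pi^{-1}(B_\delta)$.
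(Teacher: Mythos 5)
Your argument is correct, and it follows the same basic strategy as the paper --- both proofs hinge on the Reiter disintegration \eqref{eq:reiter} of $\mu_G$ over $\partial\mathbb{H}^n\cong G/P$ to convert the bound $\nu(B_\delta)\leq n\delta$ into a $\mu_G$-bound that is linear in $\delta$ --- but the mechanism you use to get uniformity in $\xi$ is genuinely different. The paper takes a Borel section $\eta:G/P\to G$ with relatively compact image $F$ (Mackey), observes that $\{g\in K_\epsilon:\,g\xi\in B_\delta\}\subset \tilde B_\delta C_\epsilon^{-1}\tilde\xi^{-1}$ with $C_\epsilon=P\cap F^{-1}K_\epsilon^{-1}F$, kills $\tilde\xi^{-1}$ by right invariance, and then evaluates $\mu_G(\tilde B_\delta C_\epsilon^{-1})$ via \eqref{eq:reiter}, getting $\sigma_\epsilon(\delta)=\tfrac1\alpha\mu_P(C_\epsilon^{-1})\delta$. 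You instead exploit the transitivity of the maximal compact subgroup $K$ on $\partial\mathbb{H}^n$: writing the set as $K_\epsilon\cap\pi^{-1}(B_\delta)k_\xi^{-1}$, right-translating by $k_\xi$ (legitimate since $G$ is unimodular) and enlarging $K_\epsilon k_\xi$ to the fixed compact set $L=K_\epsilon K$, after which \eqref{eq:reiter} gives $\mu_G(L\cap\pi^{-1}(B_\delta))\leq M_\epsilon\,\nu(B_\delta)$. Your route avoids the Borel section and the slightly delicate manipulation producing $C_\epsilon$ (and also the paper's exact-constant $\alpha$ step, which is really a two-sided bound on the rho-function $q$ over a relatively compact set), at the price of using the specific structure $\partial\mathbb{H}^n\cong K/M$, whereas the paper's section argument works for any lcsc group and closed subgroup. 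Two small points you should make explicit if you write this up: \eqref{eq:reiter} is stated for continuous compactly supported $f$, so you need the routine extension to bounded Borel functions with compact support (the paper implicitly does the same when integrating an indicator); and the uniform bound $h_L\leq M_\epsilon$ is obtained by choosing, for each coset meeting $L$, its representative inside $L$, so that the fiber integral is taken over $P\cap L^{-1}L$, a fixed compact subset of $P$, with the factor $1/q$ bounded on $L$.
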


Replacing $\varphi$ with $f_\delta$ results in the following estimate for the integral.  

\begin{lemma}\label{lemma:kep-bdelta}  With the notation as above, 
there exists a function $M_\epsilon(\delta)$
with the property that $\lim_{\delta\to0}M_\epsilon(\delta)=0$, such that
\begin{equation*}
\left| \mathcal I(\varphi,K_\epsilon,(\xi_0,\dots,\xi_n))-\mathcal I(f_\delta,K_\epsilon,(\xi_0,\dots,\xi_n))\right|\leq M_\epsilon(\delta)\,,
\end{equation*}
for all $(\xi_0,\dots,\xi_{n+1})\in(\partial\mathbb{H}^n)^{n+1}$.
\end{lemma}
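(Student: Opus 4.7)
The plan is to bound the difference of the two integrals by controlling the measure of the set of $g \in K_\epsilon$ where the two integrands disagree. The key observation is that, since $\varphi \equiv f_\delta$ on $\partial\mathbb{H}^n \smallsetminus B_\delta$, the difference
\begin{equation*}
\mathrm{Vol}_n(\varphi(g\xi_0),\dots,\varphi(g\xi_n)) - \mathrm{Vol}_n(f_\delta(g\xi_0),\dots,f_\delta(g\xi_n))
\end{equation*}
can be non-zero at a point $g \in K_\epsilon$ only when at least one of the points $g\xi_0, \dots, g\xi_n$ lies in $B_\delta$. This localizes the estimate to a small set whose measure is controlled by Lemma~\ref{lemma:measurekepbdelta}.

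Since $\mathrm{Vol}_n$ extends continuously to the compact set $(\overline{B^n})^{n+1}$ (as noted just before the statement of Proposition~\ref{prop:ae->e}), there is a uniform constant $C$ with $|\mathrm{Vol}_n|\leq C$ on that set. Both $\varphi$ and $f_\delta = r \circ f'_\delta$ take values in $\overline{B^n}$, so the integrand of the difference is pointwise bounded by $2C$. Restricting to the bad set and applying the union bound together with Lemma~\ref{lemma:measurekepbdelta} applied separately to each $\xi_i$, I obtain
\begin{equation*}
\left|\mathcal I(\varphi,K_\epsilon,(\xi_0,\dots,\xi_n))-\mathcal I(f_\delta,K_\epsilon,(\xi_0,\dots,\xi_n))\right|
\leq 2C\,\mu_G\!\Bigl(\bigcup_{i=0}^{n}\{g\in K_\epsilon : g\xi_i\in B_\delta\}\Bigr)
\leq 2C(n+1)\sigma_\epsilon(\delta).
\end{equation*}
Setting $M_\epsilon(\delta) := 2C(n+1)\sigma_\epsilon(\delta)$ yields the claimed bound, and $M_\epsilon(\delta) \to 0$ as $\delta \to 0$ directly from the corresponding property of $\sigma_\epsilon(\delta)$.

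There is no substantive obstacle to this argument: it is a clean Lusin-style approximation combined with a union bound over the $n+1$ vertices. The single crucial point — and the reason the estimate is genuinely uniform in the simplex $(\xi_0, \dots, \xi_n)$ — is that the bound $\sigma_\epsilon(\delta)$ provided by Lemma~\ref{lemma:measurekepbdelta} is independent of $\xi$. Without this uniformity, a bound of the form $M_\epsilon(\delta)$ valid simultaneously for all $(n+1)$-tuples would not follow; all the real work has therefore been placed into Lemma~\ref{lemma:measurekepbdelta}, and the present lemma is essentially a formal consequence.
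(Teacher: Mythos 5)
Your argument is correct and is essentially the paper's own proof: the paper splits $K_\epsilon$ into the set $K_{\epsilon,0}$ where all $g\xi_j$ avoid $B_\delta$ (on which the integrands agree) and its complement $K_{\epsilon,1}$, bounds $\mu_G(K_{\epsilon,1})\leq(n+1)\sigma_\epsilon(\delta)$ via Lemma~\ref{lemma:measurekepbdelta}, and takes $M_\epsilon(\delta)=2(n+1)\|\mathrm{Vol}_n\|\sigma_\epsilon(\delta)$, which matches your $2C(n+1)\sigma_\epsilon(\delta)$. Your emphasis on the uniformity of $\sigma_\epsilon(\delta)$ in $\xi$ is exactly the point that makes the bound hold for all tuples simultaneously.
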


Observe that, although
\begin{equation}\label{eq:kep}
\left|\mathcal I(\varphi,\mathcal D,(\xi_0,\dots,\xi_n))-\mathcal I(\varphi,K_\epsilon,(\xi_0,\dots,\xi_n))\right|<\epsilon\|\mathrm{Vol}_n\|\,,
\end{equation}
for all $(\xi_0,\dots,\xi_{n+1})\in(\partial\mathbb{H}^n)^{(n+1)}$, the estimate
\begin{equation}\label{eq:F1F3}
\begin{aligned}
      & \left|\mathcal I(\varphi,K_\epsilon,(\xi_0,\dots,\xi_n)) -\frac{\mathrm{Vol}(\rho)}{\mathrm{Vol}(M)}\mathrm{Vol}_n(\xi_0,\dots,\xi_n)\right|\\
\leq&\left|\mathcal I(\varphi,K_\epsilon,(\xi_0,\dots,\xi_n))-\mathcal I(\varphi,\mathcal D,(\xi_0,\dots,\xi_n)) \right|\\
&\hphantom{XXX}+\left| \mathcal I(\varphi,\mathcal D,(\xi_0,\dots,\xi_n))-\frac{\mathrm{Vol}(\rho)}{\mathrm{Vol}(M)}\mathrm{Vol}_n(\xi_0,\dots,\xi_n)\right|
\leq \epsilon\|\mathrm{Vol}_n\|\,,
\end{aligned}
\end{equation}
holds only for almost every $(\xi_0,\dots,\xi_{n})\in(\partial\mathbb{H}^n)^{(n+1)}$, 
since this is the case for \eqref{eq:formulaD}.

From \eqref{eq:F1F3} and Lemma~\ref{lemma:kep-bdelta}, it follows that 
\begin{equation}\label{eq:F2F3ae}
\begin{aligned}
      & \left|\mathcal I(f_\delta,K_\epsilon,(\xi_0,\dots,\xi_n))-\frac{\mathrm{Vol}(\rho)}{\mathrm{Vol}(M)}\mathrm{Vol}_n(\xi_0,\dots,\xi_n)\right|\\
\leq&\left|\mathcal I(f_\delta,K_\epsilon,(\xi_0,\dots,\xi_n))-\mathcal I(\varphi,K_\epsilon,(\xi_0,\dots,\xi_n))\right|\\
&\hphantom{XXX}+\left|\mathcal I(\varphi,K_\epsilon,(\xi_0,\dots,\xi_n)) -\frac{\mathrm{Vol}(\rho)}{\mathrm{Vol}(M)}\mathrm{Vol}_n(\xi_0,\dots,\xi_n)\right|\\
	<& M_\epsilon(\delta)+\epsilon\|\mathrm{Vol}_n\|\,,
\end{aligned}
\end{equation}
for almost every $(\xi_0,\dots,\xi_{n})\in(\partial\mathbb{H}^n)^{(n+1)}$.

The following lemma uses the continuity of $f_\delta$ to deduce that 
all of the almost everywhere equality that propagated from 
the use of \eqref{eq:formulaD} in \eqref{eq:F1F3}, 
can indeed be observed to hold everywhere because of the use of Lusin theorem.

\begin{lemma}\label{lemma:continuity} There exist a function $L(\epsilon,\delta)$ 
such that $\lim_{\epsilon\to0}\lim_{\delta\to0} L(\epsilon,\delta)=0$ and
\begin{equation}\label{eq:cont}
|\mathcal I(f_\delta,K_\epsilon,(\xi_0,\dots,\xi_n))-\frac{\mathrm{Vol}(\rho)}{\mathrm{Vol}(M)}\mathrm{Vol}_n(\xi_0,\dots,\xi_n)|
\leq L(\epsilon,\delta)
\end{equation}
for all $(\xi_0,\dots,\xi_n)\in(\partial\mathbb{H}^n)^{(n+1)}$.
\end{lemma}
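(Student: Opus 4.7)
The plan is to leverage the continuity built into $f_\delta$ by Lusin's theorem in order to promote the almost everywhere bound \eqref{eq:F2F3ae} to a genuinely pointwise one, with the explicit constant
$$L(\epsilon,\delta):=M_\epsilon(\delta)+\epsilon\|\mathrm{Vol}_n\|.$$
The decay $\lim_{\epsilon\to0}\lim_{\delta\to0}L(\epsilon,\delta)=0$ is then automatic: Lemma~\ref{lemma:kep-bdelta} gives $\lim_{\delta\to0}M_\epsilon(\delta)=0$, hence $\lim_{\delta\to0}L(\epsilon,\delta)=\epsilon\|\mathrm{Vol}_n\|$, and $\epsilon\|\mathrm{Vol}_n\|\to0$ as $\epsilon\to0$.

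First, I would establish that the left-hand side of \eqref{eq:cont} is a continuous function of $(\xi_0,\dots,\xi_n)$ on all of $(\partial\mathbb{H}^n)^{n+1}$. Indeed, $f_\delta:\partial\mathbb{H}^n\to\overline{B^n}$ is continuous by construction, the $G$-action on $\partial\mathbb{H}^n$ is continuous, $\mathrm{Vol}_n$ extends continuously to $(\overline{B^n})^{n+1}$ (as recalled before Proposition~\ref{prop:ae->e}), and $\varepsilon:G\to\{-1,+1\}$ is locally constant; hence the integrand
$$(g,\xi_0,\dots,\xi_n)\;\longmapsto\;\varepsilon(g^{-1})\,\mathrm{Vol}_n\!\bigl(f_\delta(g\xi_0),\dots,f_\delta(g\xi_n)\bigr)$$
is jointly continuous and uniformly bounded by $\|\mathrm{Vol}_n\|$. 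Because $K_\epsilon$ is compact (hence $\mu_G(K_\epsilon)<\infty$), dominated convergence yields continuity of $(\xi_0,\dots,\xi_n)\mapsto \mathcal I(f_\delta,K_\epsilon,(\xi_0,\dots,\xi_n))$. On the other hand, $\mathrm{Vol}_n$ is continuous on the open subset $(\partial\mathbb{H}^n)^{(n+1)}$, so the right-hand side of \eqref{eq:cont} is also continuous there.

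Second, by \eqref{eq:F2F3ae} the difference of these two continuous functions is bounded in absolute value by $L(\epsilon,\delta)=M_\epsilon(\delta)+\epsilon\|\mathrm{Vol}_n\|$ on a subset of $(\partial\mathbb{H}^n)^{(n+1)}$ of full measure, in particular on a dense subset of this open set. Two continuous functions on an open set which differ by at most $L(\epsilon,\delta)$ on a dense subset differ by at most $L(\epsilon,\delta)$ everywhere, so \eqref{eq:cont} follows for every $(\xi_0,\dots,\xi_n)\in(\partial\mathbb{H}^n)^{(n+1)}$.

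The main delicate point is the joint continuity of the integrand without any restriction on the $\xi_i$: this is precisely the reason for post-composing with the retraction $r:\mathbf{R}^n\to\overline{B^n}$ in the definition of $f_\delta$, since the continuous extension of $\mathrm{Vol}_n$ is only available when all arguments lie in $\overline{B^n}$. Once this continuity is secured, the rest is a routine density argument combined with the bookkeeping of the $\epsilon$ and $\delta$ contributions already collected in \eqref{eq:F2F3ae}.
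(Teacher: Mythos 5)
Your argument stands or falls with the claim that $\mathrm{Vol}_n$ ``extends continuously to $(\overline{B^n})^{n+1}$'', which you use to get joint continuity of the integrand and hence continuity of $(\xi_0,\dots,\xi_n)\mapsto\mathcal I(f_\delta,K_\epsilon,(\xi_0,\dots,\xi_n))$. That claim is false, and it is not what is asserted before Proposition~\ref{prop:ae->e}: the remark there says that the volume of ideal simplices continuously extends the volume function from tuples of interior points, and that $\mathrm{Vol}_n$ is continuous on $(\partial\mathbb{H}^n)^{(n+1)}$; it does not say that $\mathrm{Vol}_n$ is continuous on all of $(\overline{B^n})^{n+1}$, and indeed it is not. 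For example, applying to a fixed regular ideal simplex a sequence of isometries converging to a constant map onto a point $\xi\in\partial\mathbb{H}^n$, the vertex tuples converge in $(\overline{B^n})^{n+1}$ to $(\xi,\dots,\xi)$ while the volumes stay equal to $v_n$, whereas $\mathrm{Vol}_n(\xi,\dots,\xi)=0$; the discontinuities occur exactly at tuples in which two boundary points coincide. Now $f_\delta$ takes boundary values (it agrees with $\varphi$ off $B_\delta$) and has no reason to be injective, so $\xi\mapsto\mathrm{Vol}_n(f_\delta(g\xi_0),\dots,f_\delta(g\xi_n))$ can be discontinuous at those $g$ for which two of the images $f_\delta(g\xi_i)$ collide; the retraction $r$ only ensures that the values lie in $\overline{B^n}$, it does not buy continuity of $\mathrm{Vol}_n$ there. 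Consequently your dominated convergence step does not yield continuity of $\mathcal I(f_\delta,K_\epsilon,\cdot)$, and the subsequent density argument (which is fine in itself) has nothing to rest on.

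This is precisely the difficulty the paper's proof is designed to handle, and it is why its constant $L(\epsilon,\delta)$ is larger than your $M_\epsilon(\delta)+\epsilon\|\mathrm{Vol}_n\|$. The paper first notes that $\varphi$ is almost everywhere injective (double ergodicity plus non-elementarity), so the set $F$ of distinct pairs identified by $f_\delta$ outside $B_\delta$ is null; then, for a fixed $(\xi_0,\dots,\xi_n)\in(\partial\mathbb{H}^n)^{(n+1)}$, it works on the set $\mathcal E(\xi_0,\dots,\xi_n)\subset K_\epsilon$ of those $g$ whose images under $f_\delta\circ g$ are pairwise distinct, whose complement in $K_\epsilon$ has measure at most $(n+1)\sigma_\epsilon(\delta)$ by Lemma~\ref{lemma:measurekepbdelta}; on $\mathcal E(\xi_0,\dots,\xi_n)$ the integrand is continuous in the $\xi_i$, and one passes from a sequence of generic tuples $(\xi_0^{(k)},\dots,\xi_n^{(k)})$ in the full-measure set where \eqref{eq:F2F3ae} holds to the given tuple via dominated convergence. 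The price is the extra error $2\delta+2(n+1)\|\mathrm{Vol}_n\|\sigma_\epsilon(\delta)$, giving $L(\epsilon,\delta)=2\delta+2(n+1)\|\mathrm{Vol}_n\|\sigma_\epsilon(\delta)+M_\epsilon(\delta)+\epsilon\|\mathrm{Vol}_n\|$. In short: the missing ingredient in your proposal is a control of the non-injectivity of $f_\delta$ (via the a.e.\ injectivity of $\varphi$ and the measure estimate of Lemma~\ref{lemma:measurekepbdelta}); without it, the continuity you assert simply may fail.
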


From this, and from Lemma~\ref{lemma:kep-bdelta}, and using once again \eqref{eq:kep}, 
now all everywhere statements,
we conclude that 
\begin{equation*}
\begin{aligned}
	&\left|\mathcal I(\varphi,\mathcal D,(\xi_0,\dots,\xi_n))-\frac{\mathrm{Vol}(\rho)}{\mathrm{Vol}(M)}\mathrm{Vol}_n(\xi_0,\dots,\xi_n)\right|\\
  \leq&\left|\mathcal I(\varphi,\mathcal D,(\xi_0,\dots,\xi_n))-\mathcal I(\varphi,K_\epsilon,(\xi_0,\dots,\xi_n))\right|\\
  	&\hphantom{XXX}+\left| \mathcal I(\varphi,K_\epsilon,(\xi_0,\dots,\xi_n))-\mathcal I(f_\delta,K_\epsilon,(\xi_0,\dots,\xi_n))\right|\\
	&\hphantom{XXX}+\left|\mathcal I(f_\delta,K_\epsilon,(\xi_0,\dots,\xi_n))-\frac{\mathrm{Vol}(\rho)}{\mathrm{Vol}(M)}\mathrm{Vol}_n(\xi_0,\dots,\xi_n)\right|\\
      <&M_\epsilon(\delta)+L(\epsilon,\delta)+\epsilon\|\mathrm{Vol}_n\|\,,
\end{aligned}
\end{equation*}
for all $(\xi_0,\dots,\xi_{n+1})\in(\partial\mathbb{H}^n)^{n+1}$. 
 This concludes the proof of Proposition~\ref{prop:ae->e}, assuming the unproven lemmas.
\qed
\end{proof}

We now proceed to the proof of Lemmas~\ref{lemma:measurekepbdelta},~\ref{lemma:kep-bdelta} 
and~\ref{lemma:continuity}.
\begin{proof}[Proof of Lemma~\ref{lemma:measurekepbdelta}]
Recall that $\partial\mathbb{H}^n=G/P$, where $P<G$ is a minimal parabolic
and let $\eta:G/P\to G$ be a Borel section of the projection $G\to G/P$ 
such that $F:=\eta(G/P)$ is relatively compact \cite[Lemma~1.1]{Mackey}
Let $\tilde B_\delta:=\eta(B_\delta)$ and, if $\xi\in B_\delta$, 
set $\tilde \xi:=\eta(\xi)\in\tilde B_\delta$.  On the other hand, 
if $g\in K_\epsilon$ and $g\xi\in B_\delta$,
there exists $p\in P$ such that $g\tilde \xi p\in \tilde B_\delta$ and, 
in fact, the $p$ can be chosen to be in $P\cap F^{-1}(K_\epsilon)^{-1}F=:C_\epsilon$.
Thus we have
\begin{equation*}
\begin{aligned}
   &\{g\in K_\epsilon:\,g\xi\in B_\delta\}
   =\{g\in K_\epsilon:\,\text{there exists }p\in C_\epsilon\text{ with }g\tilde \xi p\in \tilde B_\delta\}\\
=&\{g\in K_\epsilon\cap\tilde B_\delta p^{-1}\tilde \xi^{-1}\text{ for some }p\in C_\epsilon\}
\subset K_\epsilon\cap\tilde B_\delta C_\epsilon^{-1}\tilde \xi^{-1}\,,
\end{aligned}
\end{equation*}
and hence
\begin{equation*}
\mu_G(\{g\in K_\epsilon:\, g\xi\in B_\delta\})\leq\mu_G(K_\epsilon\tilde \xi^{-1}\cap\tilde B_\delta C_\epsilon^{-1})\leq\mu_G(\tilde B_\delta C_\epsilon^{-1})\,.
\end{equation*}

To estimate the measure, recall that 
there is a strictly positive continuous function $q:G\to\mathbf{R}^+$ 
and a positive measure $\nu$ on $\partial\mathbb{H}^n$
such that 
\begin{equation}\label{eq:reiter}
\int_G f(g)q(g)\,d\mu_G(g)=\int_{\partial\mathbb{H}^n}\left(\int_P f(\dot g\xi)\,d\mu_P(\xi)\right) d\nu(\dot g)\,,
\end{equation}
for all continuous functions $f$ on $G$ with compact support, 
\cite[\S.8.1]{Reiter_Stegeman}.

We may assume that $\mu_G(\tilde B_\delta C_\epsilon^{-1})\neq0$ (otherwise we are done).  
Then, since $q$ is continuous and strictly positive and the integral is on a relatively compact set, 
there exists a constant $0<\alpha<\infty$ such that
\begin{equation}\label{eq:reiter2}
\alpha\mu_G(\tilde B_\delta C_\epsilon^{-1})=\int_{\partial\mathbb{H}^n}\left(\int_P \chi_{\tilde B_\delta C_\epsilon^{-1}}(\dot g\xi)\,d\mu_P(\xi)\right) d\nu(\dot g)\,.
\end{equation}
But, by construction, if $g\in \tilde B_\delta$, then $g\xi\in\tilde B_\delta C_\epsilon^{-1}$ 
if and only if $\xi\in C_\epsilon^{-1}$, so that
\begin{equation*}
\int_P \chi_{\tilde B_\delta C_\epsilon^{-1}}(\dot g\xi)\,d\mu_P(\xi)=\mu_P(C_\epsilon^{-1})\,,
\end{equation*}
and hence 
\begin{equation*}
\alpha\mu_G(\tilde B_\delta C_\epsilon^{-1})=\nu(B_\delta)\mu_P(C_\epsilon^{-1})\,.
\end{equation*}
Since $\nu(B_\delta)<\delta$, 
the inequality \eqref{eq:measurekepbdelta} is proven with 
$\sigma_\epsilon(\delta)=\frac1\alpha\mu_P(C_\epsilon^{-1})\delta$.
\qed
\end{proof}

\begin{proof}[Proof of Lemma~\ref{lemma:kep-bdelta}]  Let us fix
$(\xi_0,\dots,\xi_n)\in(\partial\mathbb{H}^n)^{n+1}$.  Then we have
\begin{equation*}
\begin{aligned}
       &\left| \mathcal I(\varphi,K_\epsilon,(\xi_0,\dots,\xi_n))-\mathcal I(f_\delta,K_\epsilon,(\xi_0,\dots,\xi_n))\right|\\
 \leq&\left| \mathcal I(\varphi,{K_{\epsilon,0}},(\xi_0,\dots,\xi_n))-\mathcal I(f_\delta,{K_{\epsilon,0}},(\xi_0,\dots,\xi_n))\right|\\
        &\hphantom{XXX}+\left| \mathcal I(\varphi,{K_{\epsilon,1}},(\xi_0,\dots,\xi_n))-\mathcal I(f_\delta,{K_{\epsilon,1}},(\xi_0,\dots,\xi_n))\right|\,,
\end{aligned}
\end{equation*}
where 
\begin{equation*}
{K_{\epsilon,0}}:=\bigcap_{j=0}^n\{g\in K_\epsilon:\,g\xi_j\in\partial\mathbb{H}^n\smallsetminus B_\delta\}\quad\text{ and }\quad
{K_{\epsilon,1}}:=K_\epsilon\smallsetminus{K_{\epsilon,0}}\,.
\end{equation*}
But $\varphi(g)=f_\delta(g)$ for all $g\in{K_{\epsilon,0}}$, and hence difference of the integrals on ${K_{\epsilon,0}}$ vanishes.
Since
\begin{equation*}
\mu_G({K_{\epsilon,1}})=\mu_G\left(K_\epsilon\cap\bigcup_{j=0}^n\{g\in K_\epsilon:\,g\xi_j\in B_\delta\}\right)\leq (n+1)\sigma_\epsilon(\delta)\,,
\end{equation*}
we obtain the assertion with $M_\epsilon(\delta):=2(n+1)\|\mathrm{Vol}_n\| \sigma_\epsilon(\delta)$.
\qed
\end{proof}

\begin{proof}[Proof of Lemma~\ref{lemma:continuity}]
If the volume were continuous on $(\partial\mathbb{H}^n)^{n+1}$ or if the function $f_\delta$
were injective, the assertion would be obvious. 

Observe that $\varphi$ is almost everywhere injective:  in fact, by double ergodicity, 
the subset of $\partial\mathbb{H}^n\times\partial\mathbb{H}^n$ consisting of pairs $(x,y)$ 
for which $\varphi(x)=\varphi(y)$ is a set of either
zero or full measure and the latter would contradict elementarity of the action.
Then on a set of full measure in $\partial\mathbb{H}^n\smallsetminus B_\delta$ the function $f_\delta$ is
injective and hence $\mathrm{Vol}_n(f_\delta(g\xi_0),\dots,f_\delta(g\xi_n))$ is continuous provided
the $f_\delta(g\xi_0),\dots,f_\delta(g\xi_n)$ are pairwise distinct.

So, for any $(\xi_0,\dots,\xi_n)\in(\partial\mathbb{H}^n)^{(n+1)}$ we define
\begin{equation*}
\mathcal E(\xi_0,\dots,\xi_n):=\{g\in K_\epsilon:\,f_\delta(g\xi_0),\dots,f_\delta(g\xi_n)\text{ are pairwise distinct}\}\,.
\end{equation*}
Let $F\subset(B_\delta^c\times B_\delta^c)^{(2)}$ be the set of distinct pairs 
$(x,y)\in (B_\delta^c\times B_\delta^c)^{(2)}$ such that $f_\delta(x)=f_\delta(y)$.
Then $F$ is of measure zero, and given any 
$(\xi_0,\xi_1)\in\partial\mathbb{H}^n\times\partial\mathbb{H}^n$ distinct,
the set $\{g\in G:\,g(\xi_0,\xi_1)\in F\}$ is of $\mu_G$-measure zero. 
This, together with Lemma~\ref{lemma:measurekepbdelta}, implies that 
\begin{equation}\label{eq:measurekepex}
\mu_G(K_\epsilon\smallsetminus\mathcal E(\xi_0,\dots,\xi_n))\leq\mu_G\left(\bigcup_{j=0}^n\{g\in K_\epsilon:\,g\xi_j\in B_\delta\}\right)\leq(n+1)\sigma_\epsilon(\delta)\,.
\end{equation}

Let $\mathcal S\subset(\partial\mathbb{H}^n)^{(n+1)}$ be the set of full measure where \eqref{eq:F2F3ae} 
holds and let $(\xi_0,\dots,\xi_n)\in(\partial\mathbb{H}^n)^{(n+1)}$.
Since $\nu^{n+1}(\partial\mathbb{H}^n)^{(n+1)}\smallsetminus\mathcal S)=0$, 
there exists a sequence of points $(\xi_0^{(k)},\dots,\xi_n^{(k)})\in\mathcal S$ 
with $(\xi_0^{(k)},\dots,\xi_n^{(k)})\to (\xi_0,\dots,\xi_n)$.   Then for every $g\in \mathcal E(\xi)$
\begin{equation*}
\lim_{k\to\infty}\mathrm{Vol}_n(f_\delta(g\xi_0^{(k)}),\dots,f_\delta(g\xi_n^{(k)}))=\mathrm{Vol}_n(f_\delta(g\xi_0),\dots,f_\delta(g\xi_n))\,,
\end{equation*}
and, by the Dominated Convergence Theorem applied to the sequence $h_k(g):=\mathrm{Vol}_n(f_\delta(g\xi_0^{(k)}),\dots,f_\delta(g\xi_n^{(k)}))$, we deduce that
\begin{equation}\label{eq:DCT}
\lim_{k\to\infty}\mathcal I(f_\delta,\mathcal E(\xi_0,\dots,\xi_n),(\xi_0^{(k)},\dots,\xi_n^{(k)}))=\mathcal I(f_\delta,\mathcal E(\xi_0,\dots,\xi_n),(\xi_0,\dots,\xi_n))\,.
\end{equation}
But then
\begin{equation*}
\begin{aligned}
       &|\mathcal I(f_\delta,K_\epsilon,(\xi_0,\dots,\xi_n))-\frac{\mathrm{Vol}(\rho)}{\mathrm{Vol}(M)}\mathrm{Vol}_n(\xi_0,\dots,\xi_n)|\\
\leq&\left|\mathcal I(f_\delta,K_\epsilon,(\xi_0,\dots,\xi_n))-\mathcal I(f_\delta,\mathcal E(\xi_0,\dots,\xi_n),\xi_0,\dots,\xi_n)) \right|\\
    &\hphantom{XXX}+\left|\mathcal I(f_\delta,\mathcal E(\xi_0,\dots,\xi_n),(\xi_0,\dots,\xi_n))-\mathcal I(f_\delta,\mathcal E(\xi_0,\dots,\xi_n),(\xi_0^{(k)},\dots,\xi_n^{(k)}))\right|\\
    &\hphantom{XXX}+\left|\mathcal I(f_\delta,\mathcal E(\xi_0,\dots,\xi_n),(\xi_0^{(k)},\dots,\xi_n^{(k)}))-\mathcal I(f_\delta,K_\epsilon,(\xi_0^{(k)},\dots,\xi_n^{(k)}))\right|\\
    &\hphantom{XXX}+\left|\mathcal I(f_\delta,K_\epsilon,(\xi_0^{(k)},\dots,\xi_n^{(k)}))-\frac{\mathrm{Vol}(\rho)}{\mathrm{Vol}(M)}\mathrm{Vol}_n(\xi_0^{(k)},\dots,\xi_n^{(k)})\right|\\
    &\hphantom{XXX}+\left|\frac{\mathrm{Vol}(\rho)}{\mathrm{Vol}(M)}\mathrm{Vol}_n(\xi_0^{(k)},\dots,\xi_n^{(k)})-\frac{\mathrm{Vol}(\rho)}{\mathrm{Vol}(M)}\mathrm{Vol}_n(\xi_0,\dots,\xi_n)\right|\,,
\end{aligned}
\end{equation*} 
for all $(\xi_0,\dots,\xi_n)\in(\partial\mathbb{H}^n)^{(n+1)}$.

The first and third line after the inequality sign are each 
$\leq(n+1)\|\mathrm{Vol}_n\|\sigma_\epsilon(\delta)$ because of \eqref{eq:measurekepex};
the second line after the equality is less than $\delta$ if $k$ is large enough 
because of \eqref{eq:DCT};  
the fourth line is $\leq M_\epsilon(\delta)+\epsilon\|\mathrm{Vol}_n\|$ by \eqref{eq:F2F3ae} 
since $(\xi_0^{(k)},\dots,\xi_n^{(k)})\in\mathcal S$ and 
finally the last line is also less than $\delta$ if $k$ if large enough.
All of the estimate hold for all $(\xi_0,\dots,\xi_n)\in(\partial\mathbb{H}^n)^{(n+1)}$, 
and hence the assertion is proven
with 
$L(\epsilon,\delta):=
2\delta+2(n+1)\|\mathrm{Vol}_n\|\sigma_\epsilon(\delta)+M_\epsilon(\delta)+\epsilon\|\mathrm{Vol}_n\|$.
\qed
\end{proof}

\subsection*{Step 3: The Boundary Map is an Isometry}
Suppose now that the equality $|\mathrm{Vol}(\rho)|=|\mathrm{Vol}(i)|$ holds.
Then $\varphi$ maps enough regular simplices to regular simplices.
In this last step of the proof, we want to show that
then $\varphi$ is essentially an isometry, 
and this isometry will realize the conjugation between $\rho$ and $i$. 

In the case of a cocompact lattice $\Gamma< \mathrm{Isom}(\mathbb{H}^n)$ 
and a lattice embedding $\rho:\Gamma\rightarrow \mathrm{Isom}^+(\mathbb{H}^n)$, 
the limit map $\varphi$ is continuous and the proof is very simple based on Lemma~\ref{lemma: step 3}. 
This is the original setting of Gromov's proof of Mostow rigidity for compact hyperbolic manifolds. 

If either the representation $\rho$ is not assumed to be a lattice embedding, 
or if $\Gamma$ is not cocompact, 
then the limit map $\varphi$ is only measurable and 
one needs a measurable variant of Lemma~\ref{lemma: step 3} 
presented in Proposition~\ref{prop: step 3} for $n\geq 4$. 
The case $n=3$ was first proven by Thurston for his generalization 
(Corollary~\ref{thm: Thurston Gromov Mostow} here) of Gromov's proof of Mostow rigidity. 
It is largely admitted that the case $n=3$ easily generalizes to $n\geq 4$, 
although we wish to point out that the proof is very much simpler for $n\geq 4$ based on the fact that 
the reflection group of a regular simplex is dense in the isometry group. 
For the proof of Propostion~\ref{prop: step 3}, we will omit the case $n=3$ 
which is nicely written down in all necessary details by Dunfield \cite[pp. 654-656]{Dunfield}, 
following the original \cite[two last paragraphs of Section 6.4]{Thurston_notes}. 

Let $T$ denote the set of $(n+1)$-tuples of points in $\partial \mathbb{H}^n$ 
which are vertices of a regular simplex,
$$
T=\{ \underline{\xi}=(\xi_0,...,\xi_n)\in (\partial \mathbb{H}^n)^{n+1}\mid 
\underline{\xi} \mathrm{ \ are \ the \ vertices \ of \ an \ ideal \ regular \ simplex} \}\,.
$$
We will call an $(n+1)$-tuple in $T$ a regular simplex. 
Note that the order of the vertices $\xi_0,...,\xi_n$ 
induces an orientation on the simplex $\underline{\xi}$. For $\underline{\xi}\in T$, 
denote by $\Lambda_{\underline{\xi}}<\mathrm{Isom}(\mathbb{H}^n)$ the reflection group 
generated by the reflections in the faces of the simplex $\underline{\xi}$.

\begin{lemma} \label{lemma: step 3} Let $n\geq 3$. Let $\underline{\xi}=(\xi_0,...,\xi_n) \in T$. 
Suppose that $\varphi:\partial \mathbb{H}^n\rightarrow \partial \mathbb{H}^n$ is a map 
such that for every $\gamma\in \Lambda_{\underline{\xi}}$, 
the simplex with vertices $(\varphi(\gamma \xi_0),...,\varphi(\gamma \xi_n))$ is regular 
and of the same orientation as $(\gamma \xi_0,...,\gamma \xi_n)\in T$. 
Then there exists a unique isometry $h\in \mathrm{Isom}(\mathbb{H}^n)$ 
such that $h(\xi)=\varphi(\xi)$ for every $\xi\in \cup_{i=0}^n \Lambda_{\underline{\xi}} \xi_i$. 
\end{lemma}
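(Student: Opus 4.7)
The plan is to define $h$ on the initial simplex $\underline{\xi}$ by exploiting the simple transitivity of $\mathrm{Isom}^+(\mathbb{H}^n)$ on ordered vertices of positively oriented regular ideal simplices, and then to propagate the equality $h=\varphi$ to the full $\Lambda_{\underline{\xi}}$-orbit of vertices by an induction on word length.

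First, applying the hypothesis with $\gamma=\mathrm{id}$, the tuple $(\varphi(\xi_0),\dots,\varphi(\xi_n))$ is the vertex set of a regular ideal simplex of the same orientation as $\underline{\xi}$. Since $\mathrm{Isom}^+(\mathbb{H}^n)$ acts simply transitively on ordered $(n+1)$-tuples of vertices of positively oriented regular ideal simplices, there is a unique $h\in\mathrm{Isom}^+(\mathbb{H}^n)$ with $h(\xi_i)=\varphi(\xi_i)$ for $i=0,\dots,n$. Uniqueness of $h$ as an element of $\mathrm{Isom}(\mathbb{H}^n)$ is automatic, since any isometry is determined by its values on $n+1$ boundary points not lying on a common hyperplane.

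Let $r_0,\dots,r_n$ denote the reflections in the faces of $\underline{\xi}$ opposite to $\xi_0,\dots,\xi_n$; these generate $\Lambda_{\underline{\xi}}$. I claim $h(\gamma\xi_i)=\varphi(\gamma\xi_i)$ for all $i$ and every $\gamma\in\Lambda_{\underline{\xi}}$, by induction on the word length $|\gamma|$. The base case is by construction. For the inductive step, write $\gamma=\gamma' r_j$ with $|\gamma'|=|\gamma|-1$. Since $r_j$ fixes the face opposite $\xi_j$ pointwise, we have $\gamma\xi_i=\gamma'\xi_i$ for all $i\neq j$, and the inductive hypothesis gives $\varphi(\gamma\xi_i)=h(\gamma\xi_i)$ for these $n$ indices. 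By hypothesis $(\varphi(\gamma\xi_0),\dots,\varphi(\gamma\xi_n))$ is a regular ideal simplex with the same orientation as $\gamma\underline{\xi}$, and since $h$ is orientation-preserving the tuple $(h(\gamma\xi_0),\dots,h(\gamma\xi_n))$ has the same property. These two oriented regular ideal simplices already agree in $n$ vertices, so the remaining vertex is determined by the following geometric fact: the remaining vertex of an oriented regular ideal simplex is pinned down by the other $n$, because the two possible completions are reflections of each other across the face spanned by the $n$ given vertices and therefore carry opposite orientations. This forces $\varphi(\gamma\xi_j)=h(\gamma\xi_j)$, completing the induction.

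The only real geometric input is the fact that $n$ vertices together with an orientation determine a regular ideal simplex in $\mathbb{H}^n$; this is precisely where the preservation of orientation (and not merely of regularity) in the hypothesis is crucial. Everything else is straightforward combinatorial bookkeeping with reduced words in $\Lambda_{\underline{\xi}}$, so the main obstacle is mild: it amounts to making sure the inductive step is set up in a way compatible with the generation of $\Lambda_{\underline{\xi}}$ by reflections in the faces of $\underline{\xi}$ (equivalently, that consecutive simplices along a reduced word in the $\Lambda_{\underline{\xi}}$-orbit of $\underline{\xi}$ share a codimension-one face).
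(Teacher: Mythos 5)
Your proposal is correct and follows essentially the same route as the paper: fix $h$ on the initial simplex using simple transitivity on oriented regular ideal tuples, then induct on the length of a reflection word, using at each step that the new simplex shares a codimension-one face with the previous one and that for $n\geq 3$ a regular ideal face plus a choice of orientation determines the remaining vertex uniquely. Your decomposition $\gamma=\gamma' r_j$ in the standard generators is just the conjugate reformulation of the paper's gallery-style factorization into reflections in faces of the successively reflected simplices, so the two arguments coincide in substance.
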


Note that this lemma and its subsequent proposition are the only places in the proof 
where the assumption $n\geq 3$ is needed. 
The lemma is wrong for $n=2$ since $\varphi$ could be any orientation preserving homeomorphism 
of $\partial \mathbb{H}^2$.

\begin{proof} If $\underline{\xi}=(\xi_0,...,\xi_n)$ and $(\varphi(\xi_0),...,\varphi(\xi_n))$ 
belong to $T$, then there exists a unique isometry $h\in \mathrm{Isom}^+(\mathbb{H}^n)$ 
such that $h\xi_i=\varphi(\xi_i)$ for $i=0,...,n$. It remains to check that 
\begin{equation}\label{equ: h=phi} h(\gamma \xi_i)=\varphi(\gamma \xi_i)\end{equation}
for every $\gamma\in \Lambda_{\underline{\xi}}$. 
Every $\gamma\in \Lambda_{\underline{\xi}}$ is a product $\gamma=r_k\cdot ... \cdot r_1$, 
where $r_j$ is a reflection in a face of the regular simplex $r_{j-1}\cdot ... \cdot r_1(\underline{\xi})$. 
We prove the equality (\ref{equ: h=phi}) by induction on $k$, the case $k=0$ being true by assumption. 
Set $\eta_i=r_{k-1}\cdot ... \cdot r_1(\xi_i)$. By induction, we know that $h(\eta_i)=\varphi(\eta_i)$. 
We need to show that $h(r_k\eta_i)=\varphi(r_k\eta_i)$. 
The simplex $(\eta_0,...,\eta_n)$ is regular and $r_k$ is a reflection in one of its faces, 
say the face containing $\eta_1,...,\eta_n$. Since $r_k\eta_i=\eta_i$ for $i=1,...,n$, 
it just remains to show that $h(r_k\eta_0)=\varphi(r_k\eta_0)$. 
The simplex $(r_k\eta_0,r_k\eta_1,...,r_k\eta_n)=(r_k\eta_0,\eta_1,...,\eta_n)$ 
is regular with opposite orientation to $(\eta_0,\eta_1,...,\eta_n)$. 
This implies on the one hand that the simplex $(h(r_k\eta_0),h(\eta_1),...,h(\eta_n))$ 
is regular with opposite orientation to $(h(\eta_0),h(\eta_1),...,h(\eta_n))$, 
and on the other hand that the simplex $(\varphi(r_k \eta_0),\varphi(\eta_1),...,\varphi(\eta_n))$ 
is regular with opposite orientation to $(\varphi(\eta_0),...,\varphi(\eta_n))$. 
Since  $(h(\eta_0),h(\eta_1),...,h(\eta_n))=(\varphi(\eta_0),...,\varphi(\eta_n))$ 
and there is in dimension $n\geq 3$ only one regular simplex with face $h(\eta_1),...,h(\eta_n)$ 
and opposite orientation to $(h(\eta_0),h(\eta_1),...,h(\eta_n))$ 
it follows that $h(r_k\eta_0)=\varphi(r_k\eta_0)$.\qed
 \end{proof} 

If $\varphi$ were continuous, 
sending the vertices of all positively (respectively negatively) oriented ideal regular simplices 
to vertices of positively (resp. neg.) oriented ideal regular simplices, 
then it would immediately follow from the lemma 
that $\varphi$ is equal to an isometry $h$ 
on the orbits $\cup_{i=0}^n \Lambda_{\underline{\xi}} \xi_i$ of the vertices of one regular simplex 
under its reflection group. 
Since the set $\cup_{i=0}^n \Lambda_{\underline{\xi}} \xi_i$ is dense in $\partial \mathbb{H}^n$, 
the continuity of $\varphi$ would imply that $\varphi$ is equal to the isometry $h$ 
on the whole $\partial \mathbb{H}^n$.

In the setting of the next proposition, 
we first need to show that there exist enough regular simplices 
for which $\varphi$ maps every simplex of its orbit under reflections to a regular simplex. 
Second, we apply the lemma to obtain that $\varphi$ is equal to an isometry on these orbits. 
Finally, we use ergodicity of the reflection groups to conclude that 
it is the same isometry for almost all regular simplices. 
As mentioned earlier, the proposition also holds for $n=3$ (see \cite[pp. 654-656]{Dunfield} and 
\cite[two last paragraphs of Section 6.4]{Thurston_notes}), but in that case the proof is quite harder, 
since the reflection group of a regular simplex is discrete in $\mathrm{Isom}(\mathbb{H}^{n})$ 
(indeed, one can tile $\mathbb{H}^3$ by regular ideal simplices) 
and in particular does not act ergodically on $\mathrm{Isom}(\mathbb{H}^{n})$. 

\begin{proposition} \label{prop: step 3} Let $n\geq 4$.
Let $\varphi:\partial \mathbb{H}^n\rightarrow \partial \mathbb{H}^n$ be a measurable map 
sending the vertices of almost every positively, 
respectively negatively oriented regular ideal simplex to the vertices of a positively, resp. negatively, 
oriented regular ideal simplex. Then $\varphi$ is equal almost everywhere to an isometry. 
\end{proposition}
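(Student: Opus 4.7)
The plan is to associate to almost every regular ideal simplex $\underline{\xi}\in T$ the isometry $h_{\underline{\xi}}$ produced by Lemma~\ref{lemma: step 3}, to show that this assignment defines a measurable map $H\colon T_0\to G=\mathrm{Isom}(\mathbb{H}^n)$ invariant along the reflection group orbits, and finally to invoke the density of $\Lambda_{\underline{\xi}}$ in $G$ for $n\geq 4$ to conclude that $H$ is essentially constant, thereby exhibiting the isometry that agrees with $\varphi$ almost everywhere.

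First I would set up the full-measure subset $T_0\subset T$ on which the lemma applies. Let $B\subset T$ be the null set of $\underline{\xi}$ for which $\varphi(\underline{\xi})$ is not a regular ideal simplex of the correct orientation, and define $T_0$ to consist of those $\underline{\xi}$ whose entire countable $\Lambda_{\underline{\xi}}$-orbit avoids $B$. Since $G$ acts simply transitively on $T$, fixing a reference simplex $\underline{\xi}_0$ and writing $\Lambda_0:=\Lambda_{\underline{\xi}_0}$, the orbit $\Lambda_{\underline{\xi}}\cdot\underline{\xi}$ corresponds to the set $g\Lambda_0\underline{\xi}_0$ when $\underline{\xi}=g\underline{\xi}_0$. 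A routine Fubini argument then shows that $T_0$ has full measure, and for each $\underline{\xi}\in T_0$, Lemma~\ref{lemma: step 3} produces a unique $h_{\underline{\xi}}\in G$ with $\varphi=h_{\underline{\xi}}$ on $O_{\underline{\xi}}:=\bigcup_{i=0}^n\Lambda_{\underline{\xi}}\xi_i$.

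The resulting Borel map $H\colon T_0\to G$, $\underline{\xi}\mapsto h_{\underline{\xi}}$, satisfies $H(\gamma\underline{\xi})=H(\underline{\xi})$ for every $\gamma\in\Lambda_{\underline{\xi}}$: indeed $\Lambda_{\gamma\underline{\xi}}=\gamma\Lambda_{\underline{\xi}}\gamma^{-1}=\Lambda_{\underline{\xi}}$, so $O_{\gamma\underline{\xi}}=O_{\underline{\xi}}$, and the uniqueness clause in Lemma~\ref{lemma: step 3} forces $h_{\gamma\underline{\xi}}=h_{\underline{\xi}}$. Under the identification $g\mapsto g\underline{\xi}_0$ of $G$ with $T$, this says precisely that $H$, viewed as a Borel function on $G$, is invariant under right translation by $\Lambda_0$. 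Here is where $n\geq 4$ enters crucially, and also where \textbf{the main obstacle} lies: as recalled in the preamble to the proposition, the reflection group $\Lambda_0$ of a regular ideal simplex is dense in $G$ for $n\geq 4$. I would then conclude from the standard fact that a dense subgroup of a $\sigma$-compact locally compact group acts ergodically on the group by translation (a consequence of the continuity of the regular representation on $L^2$, which propagates $\Lambda_0$-invariance of any measurable function to $G$-invariance) that $H$ equals some constant $h_0\in G$ off a null set $N\subset T_0$.

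It remains to transfer this conclusion to $\varphi$. Let $p\colon T\to\partial\mathbb{H}^n$ denote the projection $(\xi_0,\dots,\xi_n)\mapsto\xi_0$; under the identifications $T\cong G$ and $\partial\mathbb{H}^n\cong G/P$ for a minimal parabolic $P<G^+$, this is the natural quotient map. The push-forward under $p$ of the measure on $T$ lies in the natural measure class on $\partial\mathbb{H}^n$, so by Fubini, for almost every $\eta\in\partial\mathbb{H}^n$ the fiber $p^{-1}(\eta)$ meets $T_0\smallsetminus N$; picking any $\underline{\xi}$ in this intersection with $\xi_0=\eta$ gives $\varphi(\eta)=\varphi(\xi_0)=h_{\underline{\xi}}(\xi_0)=h_0(\eta)$, which concludes the proof.
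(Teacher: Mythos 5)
Your proposal is correct and follows essentially the same route as the paper's proof: a full-measure set of regular simplices whose entire reflection-group orbit is mapped to correctly oriented regular simplices, the per-simplex isometry from Lemma~\ref{lemma: step 3} with uniqueness giving invariance along the orbit, the translation of this into right $\Lambda_0$-invariance of a map on $G$ via the simply transitive action, ergodicity of the dense reflection group (this is where $n\geq 4$ enters), and a final projection argument to pass from simplices to boundary points. The only cosmetic differences are that the paper obtains the full measure of your $T_0$ as a countable intersection of right translates of a full-measure subset of $G$ rather than by a Fubini argument, and that you spell out the last fiber/projection step slightly more explicitly than the paper does.
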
 

\begin{proof} Let $T^\varphi \subset T$ denote the following subset of the set $T$ of regular simplices: 
$$T^\varphi= \left\{ \left. \underline{\xi}=(\xi_0,...,\xi_n)\in T \right| 
\begin{array}{cc}
(\varphi(\xi_0),...,\varphi(\xi_n)) \mathrm{\ belongs \ to \ } T \\
\mathrm{\ and \ has \ the \ same \ orientation \ as \ } (\xi_0,...,\xi_n)
\end{array}
\right\}.$$
By assumption, $T^\varphi$ has full measure in $T$. 
Let $T^\varphi_\Lambda\subset T^\varphi$ be the subset consisting of those regular simplices 
for which all reflections by the reflection group $\Lambda_{\underline{\xi}}$ are in $T^\varphi$,
$$
T^\varphi_\Lambda= \{\underline{\xi} \in T \mid \gamma \underline{\xi}\in T^\varphi \ \forall \gamma\in \Lambda_{\underline{\xi}}\}\,.
$$
We claim that $T^\varphi_\Lambda$ has full measure in $T$. 

To prove the claim, we do the following identification. 
Since $G=\mathrm{Isom}(\mathbb{H}^n)$ acts simply transitively on the set $T$ of (oriented) regular simplices, 
given a base point $\underline{\eta}=(\eta_0,...,\eta_n)\in T$ we can identify $G$ with $T$ via the evaluation map
$$
\begin{array}{rcl}
Ev_{\eta}: G&\longrightarrow &T \\
g &\longmapsto &g(\underline{\eta})\,.
\end{array}
$$
The subset $T^\varphi$ is mapped to a subset $G^\varphi:= (Ev_{\eta})^{-1}(T^\varphi) \subset G$ 
via this correspondence. 
A regular simplex $\underline{\xi}=g(\underline{\eta})$ belongs to $T^\varphi_\Lambda$ if and only if, by definition,
$\gamma \underline{\xi}=\gamma g \underline{\eta}$ belongs to $T^\varphi$ for every
$\gamma\in \Lambda_{\underline{\xi}}$. 
Since $ \Lambda_{\underline{\xi}}=g \Lambda_{\underline{\eta}} g^{-1}$, 
the latter condition is equivalent to $ g \gamma_0  \underline{\eta} \in T^\varphi$ 
for every $\gamma_0\in \Lambda_{\underline{\eta}} $, or in other words, $g\in G^\varphi \gamma_0^{-1}$. 
The subset  $T^\varphi_\Lambda$ is thus mapped to 
$$ 
G^\varphi=Ev_{\underline{\eta}}^{-1}(T_\Lambda^\varphi)
=\cap_{\gamma_0\in \Lambda_{\underline{\eta}}} G^\varphi \gamma_0^{-1} \subset G
$$
via the above correspondence. Since a countable intersection of full measure subsets has full measure, 
the claim is proved. 

For every $\underline{\xi}\in T^\varphi_\Lambda$ and hence almost every  $\underline{\xi}\in T$ 
there exists by Lemma~\ref{lemma: step 3} a unique isometry $h_{\underline{\xi}}$ 
such that $h_{\underline{\xi}}(\xi)=\varphi(\xi) $ on the orbit points  
$\xi \in  \cup_{i=0}^n \Lambda_{\underline{\xi}} \xi_i$. 
By the uniqueness of the isometry, 
it is immediate that $h_{\gamma \underline{\xi}}=h_{\underline{\xi}}$ for every $\gamma \in  \Lambda_{\underline{\xi}}$.
We have thus a map $h:T\rightarrow \mathrm{Isom}(\mathbb{H}^n)$
 given by $\underline{\xi}\mapsto h_{\underline{\xi}}$ defined on a full measure subset of $T$. 
 Precomposing $h$ by $Ev_{\underline{\eta}}$, 
 it is straightforward that the left $ \Lambda_{\underline{\xi}}$-invariance of $h$ on  
 $ \Lambda_{\underline{\xi}} \underline{\xi}$ 
 naturally translates to a global right invariance of $h\circ Ev_{\underline{\eta}}$ on $G$. 
Indeed, let $g \in G$ and 
 $ \gamma_0\in \Lambda_{\underline{\eta}}$. 
 We compute
$$ 
h \circ Ev_{\underline{\eta}} (g\cdot \gamma_0)=h_{g\gamma_0 \underline{\eta}}=h_{g\gamma_0 g^{-1} g\underline{\eta}}
=h_{g\underline{\eta}}=h\circ Ev_{\underline{\eta}}(g)\,,
$$
where we have used the left $\Lambda_{g \underline{\eta}}$-invariance of $h$ 
on the reflections of $g\eta$ in the third equality. 
(Recall, $g\gamma_0g^{-1}\in g\Lambda_{\underline{\eta}} g^{-1}=\Lambda_{g\underline{\eta}}$.) 
Thus, $h\circ Ev_{\underline{\eta}}:G\rightarrow G$ is invariant 
under the right action of $\Lambda_{\underline{\eta}}$. 
Since the latter group is dense in $G$, it acts ergodically on $G$ and 
$h\circ Ev_{\underline{\eta}}$ is essentially constant. This means that also $h$ is essentially constant. 
Thus, for almost every regular simplex $\underline{\xi}\in T$, 
the evaluation of $\varphi$ on any orbit point of the vertices of $\underline{\xi}$ 
under the reflection group 
$\Lambda_{\underline{\xi}}$ is equal to $h$. 
In particular, for almost every $\underline{\xi}=(\xi_0,...,\xi_n)\in T$ 
and also for almost every $\xi_0\in \mathbb{H}^n$, we have $\varphi(\xi_0)=h(\xi_0)$, 
which finishes the proof of the proposition.  
\qed
\end{proof}

We have now established that $\varphi$ is essentially equal to the isometry 
$h\in \mathrm{Isom}(\mathbb{H}^{n})$ on  
$\partial \mathbb{H}^n$. It remains to see that $h$ realizes the conjugation between $\rho$ and $i$. 
Indeed, replacing  $\varphi$ by $h$ in (\ref{equ:phiequivar}) we have 
$$ 
(h \cdot i(\gamma))( \xi) =(\rho(\gamma)\cdot h) (\xi)\,,
$$
for every $\xi \in \partial \mathbb{H}^n$ and $\gamma\in \Gamma$. 
Since all maps involved ($h,i(\gamma)$ and $\rho(\gamma)$) are isometries of $\mathbb{H}^n$ 
and two isometries induce the same map on $\partial \mathbb{H}^n$ 
if and only if they are equal it follows that 
$$ h\cdot i(\gamma) \cdot h^{-1}=\rho(\gamma)$$ 
for every $\gamma \in \Gamma$, which finishes the proof of the theorem.

\begin{acknowledgement} Michelle Bucher was supported by Swiss National Science Foundation 
project PP00P2-128309/1, Alessandra Iozzi was partially supported 
by Swiss National Science Foundation project 2000021-127016/2.  
The first and third named authors thank the Institute Mittag-Leffler in Djurholm, Sweden, 
for their warm hospitality during  the preparation of this paper.
Finally, we thank Beatrice Pozzetti for a thorough reading of the manuscript and
numerous insightful remarks.
\end{acknowledgement}



\providecommand{\bysame}{\leavevmode\hbox to3em{\hrulefill}\thinspace}
\providecommand{\MR}{\relax\ifhmode\unskip\space\fi MR }
\providecommand{\MRhref}[2]{%
  \href{http://www.ams.org/mathscinet-getitem?mr=#1}{#2}
}
\providecommand{\href}[2]{#2}

\end{document}